\newtheorem{thm}{Theorem}
\newtheorem{lem}[thm]{Lemma}
\newtheorem{ex}[thm]{Example}
\newtheorem{rem}[thm]{Remark}
\newtheorem{conj}[thm]{Conjecture}
\newtheorem{definition}[thm]{Definition}
\def\Singlearrow{{\arrow[scale=1.5,xshift={0.5pt+2.25\pgflinewidth}]{>}}}
\def\Doublearrow{{\arrow[scale=1.5,xshift=1.35pt +2.47\pgflinewidth]{>>}}}
\let\originalleft\left
\let\originalright\right
\renewcommand{\left}{\mathopen{}\mathclose\bgroup\originalleft}
\renewcommand{\right}{\aftergroup\egroup\originalright}
\numberwithin{equation}{section}
\begin{document}

\title{A generalized expansion method for computing Laplace--Beltrami eigenfunctions on manifolds}

\author{Jackson C. Turner}
\address{Department of Applied Physics and Applied Mathematics, Columbia University, New York City 10027}
\email{jackson.turner@columbia.edu}
\author{Elena Cherkaev}
\address{Department of Mathematics, University of Utah, Salt Lake City 84112}
\email{elena@math.utah.edu}
\author{Dong Wang}
\address{School of Science and Engineering, The Chinese University of Hong Kong, Shenzhen, Guangdong, 518172, China \\
Shenzhen International Center for Industrial and Applied Mathematics, Shenzhen Research Institute of Big Data, Guangdong, 518172, China }
\email{wangdong@cuhk.edu.cn}
\keywords{  Laplace operator, Laplace--Beltrami operator, fictitious domain methods, expansion method, quantum billiards}

\subjclass[2020]{
35J05,  	%Laplace operator, Helmholtz equation (reduced wave equation), Poisson equation
%81Q35,  	%Quantum mechanics on special spaces: manifolds, fractals, graphs, lattices
%35J25,  	%Boundary value problems for second-order elliptic equations
65N85,  	%Fictitious domain methods for boundary value problems involving PDEs
47A70,  	%(Generalized) eigenfunction expansions of linear operators; rigged Hilbert spaces
%35P99,  	%None of the above, but in this section (Spectral theory and eigenvalue problems for PDEs)
%35Q40,  	%PDEs in connection with quantum mechanics
%37C83,  	%Dynamical systems with singularities (billiards, etc.)
%65J10,  	%Numerical solutions to equations with linear operators 
%46N40,  	%Applications of functional analysis in numerical analysis
65N25,  	%Numerical methods for eigenvalue problems for boundary value problems involving PDEs
%65N22,  	%Numerical solution of discretized equations for boundary value problems involving PDEs 
}

\begin{abstract}

 Eigendecomposition of the Laplace--Beltrami operator is instrumental for a variety of applications from physics to data science. We develop a numerical method of computation of the eigenvalues and eigenfunctions of the Laplace--Beltrami operator on a smooth bounded domain based on the relaxation to the Schr\"odinger operator with finite potential on a Riemannian manifold and projection in a special basis. We prove spectral exactness of the method and  provide examples of calculated results and applications, particularly, in quantum billiards on manifolds.

% We develop a method to discretize the Laplace--Beltrami operator and calculate Dirichlet eigenpairs on domains that are smooth, open subsets of solvable bounded Riemannian manifolds. Mixed boundary conditions are allowed for some cases. In this method, we relax the operator to a Schr\"odinger operator with finite potential and project in a chosen basis. We prove spectral exactness of this method in both discretization and in relaxation. We provide examples of calculated results and applications, particularly in quantum billiards on manifolds. Advantages over the finite difference method are discussed.

% We provide a method to calculate eigenpairs of the Laplace--Beltrami operator on open domains with Dirichlet boundary conditions embedded on Riemannian manifolds for which a complete set of eigenfunctions is known. We prove spectral exactness of this method in both discretization and relaxation. We provide examples of calculated results, and we apply the method to quantum billiards for various domains on 2-D surfaces and evaluate the regularity of such systems by identifying the distribution of the spacings of consecutive eigenvalues as GOE or Poisson distributions.
\end{abstract}
\maketitle

\section{Introduction}\label{sec:intro}
The Laplace--Beltrami operator plays an important role in the differential equations that describe many physical systems. These include, for example, vibrating membranes, fluid flow, heat flow, and solutions to the Schr\"odinger equation. Another example is that of spectral partitions---collections of $k$ pairwise disjoint open subsets such that the sum of their first Laplace–Beltrami eigenvalues is minimal \cite{bogosel2017efficient,bourdin2010optimal,wang_2022,Wang_2019}. This has a wide class of applications including data classification \cite{osting2013minimal}, interacting agents \cite{conti2003optimal,chang2004segregated,cybulski2008}, and so on. In all the above applications, the fundamental question is how to efficiently compute the eigenvalues of the Laplace--Beltrami operator in an arbitrary domain with a proper boundary condition. Also, the Laplace--Beltrami operator is crucial to understanding systems described by nonlinear Schr\"odinger equations, such as the propagation of Langmuir waves in an ionized plasma \cite{plasma1, plasma2}, the single-particle ground-state wavefunction in a Bose–Einstein condensate \cite{plasma2}, the slowly-varying envelope of light waves in Kerr media \cite{fibich2015nonlinear}, and water surface wave packets \cite{zakharov1968stability}.

The Laplace--Beltrami operator of a scalar function $f$ on a Riemannian manifold $(\mathcal{M},g)$ is defined as the surface divergence of the vector field gradient of $f$,
\begin{equation}
    \Delta_g f=\nabla \cdot \nabla f.
\end{equation}
The divergence of a vector field $X$ with metric $g$ is (in Einstein notation)
\begin{equation}
    \nabla \cdot X=\frac{1}{\sqrt{|g|}} \partial_{i}\left(\sqrt{|g|} X^{i}\right),
\end{equation}
and the gradient of a scalar function $f$ is
\begin{equation}
    (\operatorname{grad} f)^{i}=\partial^{i} f=g^{i j} \partial_{j} f.
\end{equation}
From above, we obtain the Laplace--Beltrami operator acting on functions over $(\mathcal{M},g)$,
\begin{equation} \label{eq:LapBel}
    \Delta_g f=\frac{1}{\sqrt{|g|}} \partial_{i}\left(\sqrt{|g|} g^{i j} \partial_{j} f\right).
\end{equation}
In general, the Helmholtz equation (\textit{i.e.}~Laplace--Beltrami eigenvalue problem) with Dirichlet boundary conditions on $\Omega \subset (\mathcal{M},g)$ is
\begin{equation}\label{eq:h1}
   \begin{cases} -\Delta_g u(x) = \lambda u(x), & x \in \Omega\\
   u(x) = 0, & x\in \partial \Omega. \end{cases}
\end{equation}

In this paper, we develop a numerical method to find the eigenvalues and eigenfunctions of the Laplace--Beltrami operator with Dirichlet and periodic boundary conditions for arbitrary domains on various surfaces. The idea is highly motivated by the Schr\"odinger operator and is based off the method given in \cite{expmethod}. By using the Schr\"odinger operator relaxation, we relax the eigenvalue problem on an arbitrary domain into the eigenvalue problem for a Schr\"odinger operator on a regular domain which is convenient for the numerical discretization.

% This method provides several possible advantages over popular methods such as finite difference methods or finite element methods, for example: the method requires no generation of a mesh or grid on the domain -- this allows for a more generalizable and straightforward computation when dealing with arbitrary domains. Also, the method is not restricted to computing low-energy eigenstates (see Table \ref{tab:Lshape}) -- this makes it useful for applications that require a large number of computed eigenvalues. Such an application is quantum billiards and is given in Section~\ref{sec:numerics}.

In \cite{num2}, comparable methods on manifolds using linear and cubic FEM operators and discrete geometric Laplacians are explored, and \cite{hyp1} provides a method for hyperbolic domains. There is extensive literature on the Laplacian for planar regions \cite{num1,num3, num4, num5, num6}. In methods for solving nonlinear Schr\"odinger equations, finite difference discretizations of the Laplace operator are often used \cite{bao2004ground,bao2004computing, chang2004segregated}. It is likely many of these methods above can be extended to the Laplace--Beltrami operator on manifolds. The method we present in this paper has some immediate advantages over the finite difference method---since the boundary of domains are characterized by a potential function (see Theorem~\ref{thm:V}), no creation of a complicated mesh is needed, allowing for more generic domains and producing smooth solutions. Also, the method has promise to be quite robust in discretizing the operator on domains with corners (as in \autoref{tab:Lshape}), especially in applications when computation of many eigenvalues is required, whereas the finite difference method is notoriously inefficient on such domains. 

The rest of the paper is organized as follows. In Section~\ref{sec:GEM}, we recall the Schr\"odinger operator and introduce the generalized expansion method. We discuss and prove the convergence and accuracy of the relaxation and approximation in Section~\ref{sec:analysis} and show extensive numerical experiments in Section~\ref{sec:numerics}. We investigate applications to spherical domains, periodic domains, and billiard problems in Section~\ref{sec:applications} and draw some conclusion in Section~\ref{sec:conclusion}.

\section{Generalized Expansion Method}\label{sec:GEM}

The time-independent Schr\"odinger equation on a Riemannian manifold $\mathcal{M}$ with metric $g$, potential ${V}(x)$, and energy levels $E_n$ is
\begin{equation} \label{eq:timeind}
\hat{H}\psi_{n}(x) = \left[-
\Delta_g + {V}(x)\right]\psi_{n}(x) = E_n \psi_{n}(x), \\ 
\end{equation}
where $\Delta_g$ is the Laplace--Beltrami operator on $(\mathcal{M},g)$ as in \eqref{eq:LapBel}. The Schr\"odinger equation is an eigenvalue problem for the Schr\"odinger operator $\hat H = -\Delta_g + V(x)$. When
\begin{equation} \label{eq:9}
{V}(x) =  \left\{
\begin{array}{ll}
      0 & x \in \Omega \\
      \infty & x \not\in \Omega,
\end{array}
\right.
\end{equation}
the eigenvalue problem for $\hat{H}$ is equivalent to \eqref{eq:h1}. Eigenfunctions are normalized by setting
\begin{equation} \label{eq:11}
\int_{\Omega}|\psi_n(x)|^2d x = 1,
\end{equation}
where $|\psi_n(x)|^2dx$ is a probability density.

In \cite{expmethod}, a method is given to solve \eqref{eq:h1} with $g = I_2$ on any bounded smooth $\Omega\subset \mathbb R^2$ by embedding it in a rectangle, as in \autoref{fig:domain}. In order to evaluate the Laplace--Beltrami eigenvalues for $\Omega$ on a 2-D surface, we generalize this method when considering $\Omega$ as a smooth subset of a bounded manifold $S = (\mathcal{M},g)$ using a complete set of orthonormal eigenfunctions $\mathcal F^S_\infty = \{\phi_n\}_{n=1}^\infty$ on $S$ with corresponding eigenvalues $\{\lambda_n(S)\}_{n=1}^\infty$ (with $\lambda_1 < \lambda_2 \leq \lambda_3 \leq \dots$). Here, we assume all $\phi_n \in \mathcal F^S _ \infty$ have Dirichlet boundary conditions on $\partial S$, but for cases when $|\partial \Omega \cap \partial S| > 0$, one may choose to use other boundary conditions to obtain solutions of \eqref{eq:h1} with $\left. u\right\vert_{x \in \partial \Omega \cap \partial S} \not \equiv 0$, as in the periodic examples in Section~\ref{sec:per}.
\begin{figure}[h!]\centering
\begin{tikzpicture}    
\pgfmathsetseed{3}
     \draw[fill={rgb:black,1;white,4}] (-3,-3) rectangle (3,3);
     \node at (-3,2) [below right] {$S$};
\draw[fill={rgb:black,2;white,3}] plot [smooth cycle, samples=7,domain={1:7}]
     (\x*360/7+5*rnd:1cm+2cm*rnd) node at (0,0) {$\Omega$};
\end{tikzpicture}
  \caption{With the expansion method, a bounded domain $\Omega$ is embedded onto a rectangle $S$ with euclidean geometry and $\hat H$ is projected onto $\mathcal F^S_N$.}
  \label{fig:domain}
\end{figure}
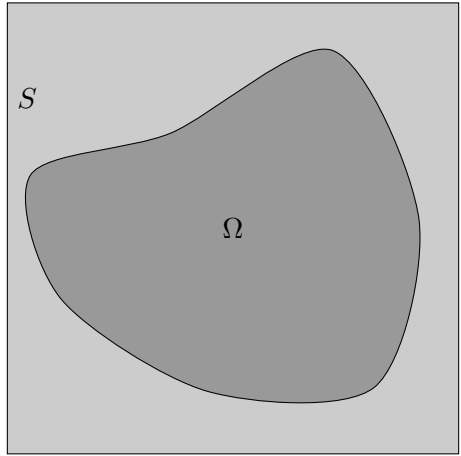

In this method, we use $\mathcal F^S _N = \{\phi_n\}_{n=1}^N$ as a basis on which we expand the operator $\hat H$ and seek solutions of \eqref{eq:h1}, or the equivalent equation involving the eigenvalue problem of the Schr\"odinger operator $\hat H$,
\begin{equation}
    \hat{H}\psi(x) = \left[-\Delta_g + \Tilde V(x)\right]\psi(x) = \lambda\psi(x),
\end{equation}
with $\Tilde V(x)$ defined as
\begin{equation}\label{eq:veq2}
\Tilde V(x) =  \left\{
\begin{array}{ll}
  0 & x \in \Omega \\
  \infty & x \in S\setminus\Omega.
\end{array} \right.
\end{equation}
We approximate $\Tilde V(x)$ as
\begin{equation}\label{eq:veqnew}
V(x) = V_0 \chi_{S\setminus \Omega}(x) = \left\{
\begin{array}{ll}
      0 & x \in \Omega \\
      V_0 \gg 1 & x \in S\setminus\Omega.
\end{array} \right.
\end{equation}
This allows us to discretize the operator $\hat H \approx H_N$,
\begin{equation} \label{eq:inprod}
    H_{N_{nm}} = \langle \phi_n, \hat{H}\phi_m\rangle = \lambda_n^S\delta_{nm} + \int_S V(x)\phi^*_n\phi_m dx,
\end{equation}
where we truncate $n,m \leq N$. The eigenvalues and eigenvectors of $H_N$ approximate those of $\hat{H}$.

\section{Convergence analysis}\label{sec:analysis}
In this section, we provide a rigorous proof on the convergence of the proposed method in the sense of $V_0\rightarrow \infty$ and $N \rightarrow \infty$. To keep self-consistency of the paper, we first recall some definitions and preliminary results from \cite{extremum,teschl2009mathematical,gilbarg1977elliptic}.

%\begin{definition} The Hausdorff distance between two sets $X,Y \subset (\mathcal{M},g)$ is defined as,
%\begin{equation}\label{eq:haus}
%d_{\mathrm{H}}(X, Y)=\max \left\{\sup _{x \in X} d(x, Y)_g, \sup _{y \in Y} d(X, y)_g\right\}.
%\end{equation}

%\end{definition}
%This notion provides a meaningful way to observe convergence of sequences of domains and how such convergence relates to spectra. For Euclidean domains, we consider the following theorems:

%\begin{thm}\label{thm:1}
%Let $X$ be a fixed compact set in $\mathbb{R}^{N}$ and $\Omega_{n}$ be a sequence of convex open sets in $B$ which converges, for the Hausdorff distance, to a (convex) set $\Omega$. Then for all $k$ fixed, the Dirichlet-Laplacian eigenvalues $\lambda_{k}\left(\Omega_{n}\right) \rightarrow \lambda_{k}(\Omega).$
%\end{thm}
%\begin{proof}
%See \cite{extremum}.
%\end{proof}

%\begin{thm}\label{thm:2}
%(Chenais) Let $B$ be a fixed compact set in $\mathbb{R}^{N}$ and $\Omega_{n}$ be sequence of open subsets of $B .$ Assume that the sets $\Omega_{n}$ are uniformly Lipschitz (it means that the boundary of $\Omega_{n}$ is locally the graph of a Lipschitz function and that we can take a uniform Lipschitz constant $L$ for all these Lipschitz functions and for all the sets $\left.\Omega_{n}\right) .$ Assume moreover that $\Omega_{n}$ converges, for the Hausdorff distance, to $\Omega .$ Then for all $k$ fixed, the Dirichlet-Laplacian eigenvalues $\lambda_{k}\left(\Omega_{n}\right) \rightarrow \lambda_{k}(\Omega)$.
%\end{thm}
%\begin{proof}
%See \cite{extremum}.
%\end{proof}

\begin{definition}
Suppose $A_{n}$ and $A$ are self-adjoint operators. We say that $A_{n}$ converges to $A$ in the strong resolvent sense, if
\begin{equation}
\| (R_{A_{n}}(z) - R_{A}(z)) \phi \| \to 0, \quad \forall \phi \in \mathfrak D(A_n)
\end{equation}
for some $z \in \Gamma=\mathbb{C} \setminus \Sigma, \Sigma=\sigma(A) \cup \bigcup_{n} \sigma\left(A_{n}\right)$ where the function $R_A(z)$ is the resolvent of $A$.
\end{definition}
\begin{definition}
A subset $\mathfrak{D}_0 \subseteq \mathfrak{D}(A)$ is a core of $A$ when $\{(x, A x): x \in \mathfrak D_0\}$ is dense in $\{(x, A x): x \in \mathfrak D(A)\}$.
\end{definition}

\begin{lem}\label{lem:book1}
(6.36 of \cite{teschl2009mathematical}): Let $A_{n}, A$ be self-adjoint operators. Then $A_{n}$ converges to $A$ in the strong resolvent sense if there is a core $\mathfrak{D}_{0}$ of $A$ such that for any $\psi \in \mathfrak{D}_{0}$ we have $P_{n} \psi \in \mathfrak{D}\left(A_{n}\right)$ for $n$ sufficiently large and $A_{n} P_n \psi \rightarrow A \psi$.
\end{lem}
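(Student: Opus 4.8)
The plan is to reduce the claimed strong resolvent convergence to a single estimate on a well-chosen dense subspace, leaning on the universal a priori bound $\|R_A(z)\|\le 1/|\operatorname{Im} z|$ valid for any self-adjoint operator. Work at the point $z=i$ (any non-real $z$ would do). Since $\|R_{A_n}(i)-R_A(i)\|\le 2$ for every $n$, and a uniformly bounded family of operators that converges in norm at every point of a dense set converges strongly on all of $\mathcal H$, it suffices to prove $\|(R_{A_n}(i)-R_A(i))\phi\|\to 0$ for $\phi$ ranging over a dense subset of $\mathcal H$.

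The convenient dense set is $(A-i)\mathfrak D_0$. It is dense because $A-i\colon \mathfrak D(A)\to\mathcal H$ is an isometry when $\mathfrak D(A)$ is equipped with the graph inner product — self-adjointness of $A$ gives $\|(A-i)x\|^2=\|Ax\|^2+\|x\|^2$ — so it maps the graph-dense set $\mathfrak D_0$ onto a dense subset of its range $\mathcal H$. Now fix $\psi\in\mathfrak D_0$, set $\phi=(A-i)\psi$ so that $R_A(i)\phi=\psi$, and take $n$ large enough that $P_n\psi\in\mathfrak D(A_n)$. Writing $\psi=R_{A_n}(i)(A_n-i)P_n\psi+(\psi-P_n\psi)$ and subtracting yields
\[
(R_{A_n}(i)-R_A(i))\phi=R_{A_n}(i)\bigl[(A\psi-A_nP_n\psi)-i(\psi-P_n\psi)\bigr]+(P_n\psi-\psi),
\]
so that $\|(R_{A_n}(i)-R_A(i))\phi\|\le \|A\psi-A_nP_n\psi\|+2\|\psi-P_n\psi\|$. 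The first term vanishes in the limit by the hypothesis $A_nP_n\psi\to A\psi$; the second vanishes because in the framework at hand $P_n$ is the orthogonal projection onto an increasing family of subspaces with dense union, so $\|P_n\|=1$ and $P_n\to I$ strongly. Passing from $z=i$ to a general $z\in\Gamma$ in the same connected component is then the standard consequence of the first resolvent identity $R_{A_n}(z)-R_{A_n}(i)=(z-i)R_{A_n}(z)R_{A_n}(i)$ together with $\|R_{A_n}(z)\|\le 1/|\operatorname{Im} z|$.

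I expect the only real friction to be organizational rather than analytic: pinning down the ambient ``varying subspaces / identification operators'' setup so that both the identity $R_{A_n}(i)(A_n-i)P_n\psi=P_n\psi$ and the convergence $P_n\psi\to\psi$ are legitimate, and recording the translation of ``$\mathfrak D_0$ is a core'' into ``$(A-i)\mathfrak D_0$ is dense'' via the graph isometry. Once those are fixed, the telescoping identity above and the $1/|\operatorname{Im} z|$ resolvent bound carry the whole argument, and there is no step that is genuinely hard.
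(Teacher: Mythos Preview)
The paper does not actually prove this lemma; it is quoted from Teschl's book as a preliminary result and used without proof. So there is no in-paper argument to compare against.

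Your argument is correct and is essentially the standard proof one finds in Teschl: the density of $(A-i)\mathfrak D_0$ via the graph-norm isometry $\|(A-i)x\|^2=\|Ax\|^2+\|x\|^2$, the telescoping identity for the resolvent difference, and the uniform bound $\|R_{A_n}(i)\|\le 1$ are exactly the right ingredients, and your bookkeeping in the displayed identity is accurate. The one point you correctly flag as ``organizational'' is genuinely needed: the lemma as stated in the paper does not say what $P_n$ is, and your estimate requires $P_n\psi\to\psi$. In Teschl's generalized framework (operators on varying subspaces), $P_n$ is the orthogonal projection onto the subspace carrying $A_n$ and $P_n\to I$ strongly is part of the standing hypotheses; in the paper's application $P_N$ projects onto $\operatorname{span}\{\phi_j\}_{j=1}^N$ with $\{\phi_j\}$ a complete orthonormal basis, so this holds. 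Once that is recorded, nothing further is missing.
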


\begin{thm}\label{thm:book1}
(6.38 of \cite{teschl2009mathematical}). Let $A_{n}$ and $A$ be self-adjoint operators. If $A_{n}$ converges to $A$ in the strong resolvent sense, we have $\sigma(A) \subseteq \lim _{n \rightarrow \infty} \sigma\left(A_{n}\right)$.
\end{thm}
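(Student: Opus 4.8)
The plan is to read the spectral inclusion off of strong resolvent convergence evaluated at the points $z=\lambda+i\delta$ sitting just above a candidate value $\lambda\in\sigma(A)$, and then let $\delta\downarrow 0$; recall that $\lambda\in\lim_{n\to\infty}\sigma(A_n)$ means precisely that there exist $\lambda_n\in\sigma(A_n)$ with $\lambda_n\to\lambda$. As a preliminary I would upgrade the hypothesis as stated: although the definition above is phrased at a single $z$ and only for $\phi$ in the (operator-dependent) domains, the uniform bound $\|R_{A_n}(z)\|\le|\operatorname{Im} z|^{-1}$, the density of each $\mathfrak D(A_n)$, and the first resolvent identity together promote it to the statement that $R_{A_n}(z)\phi\to R_A(z)\phi$ for every $\phi$ in the Hilbert space and every $z$ with $\operatorname{Im} z\neq 0$.

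Next, fix $\lambda\in\sigma(A)$ and $\delta>0$. By the spectral theorem the spectral projection $\chi_{(\lambda-\delta,\lambda+\delta)}(A)$ is nonzero, so I choose a unit vector $\psi_\delta$ in its range; its scalar spectral measure $\mu_{\psi_\delta}$ is then supported in $(\lambda-\delta,\lambda+\delta)$ and hence
\[
\|R_A(\lambda+i\delta)\psi_\delta\|^2=\int\frac{d\mu_{\psi_\delta}(x)}{(x-\lambda)^2+\delta^2}\ \ge\ \frac{1}{2\delta^2}.
\]
By the preliminary step $\|R_{A_n}(\lambda+i\delta)\psi_\delta\|\to\|R_A(\lambda+i\delta)\psi_\delta\|$, which is $\ge(\sqrt2\,\delta)^{-1}$, so $\|R_{A_n}(\lambda+i\delta)\psi_\delta\|\ge(2\delta)^{-1}$ for all large $n$. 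On the other hand each $A_n$ is self-adjoint, so $\|R_{A_n}(\lambda+i\delta)\|=\operatorname{dist}(\lambda+i\delta,\sigma(A_n))^{-1}=(\operatorname{dist}(\lambda,\sigma(A_n))^2+\delta^2)^{-1/2}$. Combining these with $\|\psi_\delta\|=1$ forces $\operatorname{dist}(\lambda,\sigma(A_n))^2+\delta^2\le 4\delta^2$, i.e. $\operatorname{dist}(\lambda,\sigma(A_n))\le\sqrt3\,\delta$ for all large $n$.

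Finally, a diagonal argument closes the proof: applying the last bound with $\delta=1/k$ produces integers $N_1<N_2<\cdots$ such that $\operatorname{dist}(\lambda,\sigma(A_n))\le\sqrt3/k$ whenever $n\ge N_k$; choosing, for $N_k\le n<N_{k+1}$, a point $\lambda_n\in\sigma(A_n)$ within $\sqrt3/k$ of $\lambda$ yields a sequence $\lambda_n\in\sigma(A_n)$ with $\lambda_n\to\lambda$, i.e. $\lambda\in\lim_{n\to\infty}\sigma(A_n)$. Since $\lambda\in\sigma(A)$ was arbitrary, $\sigma(A)\subseteq\lim_{n\to\infty}\sigma(A_n)$. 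I expect the one genuine obstacle to be the preliminary bootstrapping step, namely verifying that strong resolvent convergence in the weak (single-$z$, domain-restricted) form stated actually holds on all of $\mathcal H$ and at every non-real $z$; after that the argument is just the spectral theorem together with the elementary identity $\|R_B(z)\|=\operatorname{dist}(z,\sigma(B))^{-1}$ valid for self-adjoint $B$. One could instead invoke the fact that strong resolvent convergence implies $f(A_n)\to f(A)$ strongly for bounded continuous $f$, applied to a tent function peaked at $\lambda$, but that fact is itself established by the same bootstrapping, so nothing is really saved.
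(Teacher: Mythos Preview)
The paper does not prove this statement; it merely cites it as Theorem~6.38 of Teschl's textbook and uses it as a black box in the proof of Theorem~\ref{thm:M}. There is therefore no ``paper's own proof'' to compare against.

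That said, your argument is correct and is essentially the standard proof of this fact (and close in spirit to Teschl's own). The three ingredients you isolate---bootstrapping the single-$z$, domain-restricted convergence to strong convergence of resolvents at every non-real $z$; producing near-eigenvectors via the spectral projection $\chi_{(\lambda-\delta,\lambda+\delta)}(A)$; and the self-adjoint identity $\|R_B(z)\|=\operatorname{dist}(z,\sigma(B))^{-1}$---are exactly the right ones, and your quantitative bookkeeping with the factor $\sqrt3\,\delta$ is clean. The one caveat you already flag is the preliminary bootstrapping step: the paper's Definition of strong resolvent convergence is nonstandard (it quantifies only over $\phi\in\mathfrak D(A_n)$ and a single $z\in\Gamma$), so promoting it to convergence on all of the Hilbert space and at all non-real $z$ does require the density-plus-uniform-bound argument you sketch together with the first resolvent identity. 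With that in hand the rest is routine.
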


\begin{thm}\label{thm:book2} (2.2.3 of  \cite{extremum}). Let $L_{n}$ be a sequence of uniformly elliptic operators defined on an open set $D$ by
\begin{equation}\label{eq:elliptic}
L_{n} u:=-\sum_{i, j=1}^{N} \frac{\partial}{\partial x_{i}}\left(a_{i j}^{n}(x) \frac{\partial u}{\partial x_{j}}\right)+a_{0}^{n}(x) u.
\end{equation}
We assume that, for fixed $i, j$, the sequence $a_{i, j}^{n}$ is bounded in $L^{\infty}$ and converge almost everywhere to a function $a_{i, j}$; we also assume that the sequence $a_{0}^{n}$ is bounded in $L^{\infty}$ and converges weakly-* in $L^{\infty}$ to a function $a_{0}$. Let $L$ be the (elliptic) operator defined on $D$ as in \eqref{eq:elliptic} by the functions $a_{i, j}$ and $a_{0}$. Then each eigenvalue of $L_{n}$ converges to the corresponding eigenvalue of $L$.
\end{thm}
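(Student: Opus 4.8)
The plan is to reduce the statement to the Courant--Fischer min--max characterisation of eigenvalues and to track the behaviour of the associated Dirichlet forms
\begin{equation*}
a_n(u,v)=\int_D \sum_{i,j} a_{ij}^n\,\partial_i u\,\partial_j v + \int_D a_0^n\, uv,\qquad a(u,v)=\int_D \sum_{i,j} a_{ij}\,\partial_i u\,\partial_j v + \int_D a_0\, uv
\end{equation*}
on $H^1_0(D)$ (we may assume $a_{ij}^n=a_{ji}^n$, since only the symmetric part enters $a_n(u,u)$, so the operators are self-adjoint; $D$ is bounded, hence the form domain embeds compactly in $L^2(D)$ and the spectra are discrete). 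First I would fix $c>\sup_n\|a_0^n\|_{L^\infty(D)}$ and replace $L_n,L$ by $L_n+c,L+c$: by the uniform $L^\infty$ bound this makes all the shifted forms coercive on $H^1_0(D)$ with one common ellipticity constant $\lambda>0$, and it clearly suffices to prove $\mu_k(L_n)\to\mu_k(L)$ for the shifted operators, whose $k$-th eigenvalue is $\mu_k=\min_{\dim E=k}\ \max_{0\neq u\in E}\ a(u,u)/\|u\|_{L^2}^2$.

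For the upper bound $\limsup_n\mu_k(L_n)\le\mu_k(L)$ I would insert the fixed $k$-dimensional space $E=\operatorname{span}\{\phi_1,\dots,\phi_k\}$ spanned by the first $k$ (orthonormal) eigenfunctions of $L$ into the min--max for $L_n$. For each fixed $u\in E$ one has $a_n(u,u)\to a(u,u)$: the zeroth-order term converges because $u^2\in L^1(D)$ and $a_0^n\overset{*}{\rightharpoonup}a_0$ weakly-$*$ in $L^\infty$, while each gradient term converges by dominated convergence, using $\partial_i u\,\partial_j u\in L^1(D)$, $a_{ij}^n\to a_{ij}$ a.e., and the uniform $L^\infty$ bound on $a_{ij}^n$. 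Polarising, every entry of the symmetric $k\times k$ matrix representing $a_n$ on the fixed space $E$ converges, hence $\max_{u\in E,\ \|u\|_{L^2}=1}a_n(u,u)\to\max_{u\in E,\ \|u\|_{L^2}=1}a(u,u)=\mu_k(L)$; since min--max bounds $\mu_k(L_n)$ above by the left-hand quantity, the claim follows.

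The lower bound $\liminf_n\mu_k(L_n)\ge\mu_k(L)$ is the crux and requires compactness. Let $u_1^n,\dots,u_k^n$ be $L^2$-orthonormal eigenfunctions of $L_n$ with eigenvalues $\mu_1(L_n)\le\dots\le\mu_k(L_n)$. Coercivity gives $\lambda\|\nabla u_j^n\|_{L^2}^2\le a_n(u_j^n,u_j^n)=\mu_j(L_n)\le\mu_k(L_n)$, which is bounded by the previous paragraph, so $\{u_j^n\}_n$ is bounded in $H^1_0(D)$; along a subsequence, $u_j^n\rightharpoonup u_j$ in $H^1_0(D)$ and $u_j^n\to u_j$ in $L^2(D)$ by Rellich, for every $j$, with the limits again $L^2$-orthonormal. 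The key analytic input is the weak lower semicontinuity
\begin{equation*}
w^n\rightharpoonup w\ \text{in }H^1_0(D),\quad w^n\to w\ \text{in }L^2(D)\ \Longrightarrow\ \liminf_n a_n(w^n,w^n)\ge a(w,w).
\end{equation*}
For the zeroth-order part this follows from $a_0^n\overset{*}{\rightharpoonup}a_0$ and $(w^n)^2\to w^2$ in $L^1(D)$. For the gradient part, write $A^n=(a_{ij}^n)\ge\lambda I$ and expand
\begin{equation*}
\int_D A^n\nabla w^n\!\cdot\!\nabla w^n=\int_D A^n\nabla(w^n-w)\!\cdot\!\nabla(w^n-w)+2\int_D A^n\nabla w\!\cdot\!\nabla(w^n-w)+\int_D A^n\nabla w\!\cdot\!\nabla w;
\end{equation*}
the first term is $\ge0$ by ellipticity, the second $\to0$ since $A^n\nabla w\to A\nabla w$ in $L^2(D)$ (dominated convergence, componentwise) against $\nabla(w^n-w)\rightharpoonup0$, and the third $\to\int_D A\nabla w\!\cdot\!\nabla w$ by dominated convergence. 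Applying this with $w^n=\sum_{j}c_j u_j^n$ for any fixed $(c_j)$ with $\sum_j c_j^2=1$, and noting $w^n$ lies in the span of the first $k$ eigenfunctions of $L_n$ so that $a_n(w^n,w^n)\le\mu_k(L_n)\|w^n\|_{L^2}^2=\mu_k(L_n)$, gives $a(w,w)\le\liminf_n\mu_k(L_n)$ with $w=\sum_j c_j u_j$; maximising over $(c_j)$ and using $\mu_k(L)\le\max_{0\neq u\in\operatorname{span}\{u_j\}}a(u,u)/\|u\|_{L^2}^2$ yields $\mu_k(L)\le\liminf_n\mu_k(L_n)$ along that subsequence. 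A routine subsequence-of-subsequences argument combines this with the (full-sequence) upper bound to give $\mu_k(L_n)\to\mu_k(L)$, and undoing the shift by $c$ finishes the proof.

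The main obstacle is exactly the lower bound: one must extract $L^2$-strong and $H^1$-weak limits of the eigenfunctions — which is why $D$ must be bounded and the ellipticity uniform, so that Rellich applies — and then establish weak lower semicontinuity of the $a_n$. The weak-$*$ (rather than a.e.) hypothesis on $a_0^n$ is precisely the regularity that passes to the limit when pairing against the strongly $L^1$-convergent $(w^n)^2$, and the uniform convexity furnished by uniform ellipticity is what keeps the gradient term lower semicontinuous even though only the gradients, not their norms, converge.
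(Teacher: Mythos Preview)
Your argument is correct. The paper, however, does not supply its own proof of this statement: Theorem~\ref{thm:book2} is quoted verbatim as a preliminary result from Henrot's monograph \cite{extremum} (Theorem~2.2.3 there) and is used as a black box in the proof of Theorem~\ref{thm:M}. So there is no ``paper's proof'' to compare against.

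That said, your route---shift to a coercive form, Courant--Fischer min--max, upper bound by inserting the limiting eigenspace, lower bound via Rellich compactness of the $L_n$-eigenfunctions together with weak lower semicontinuity of $u\mapsto a_n(u,u)$---is exactly the classical argument one finds in Henrot's book and in the standard literature on $\Gamma$/Mosco-type convergence of quadratic forms. The only tacit hypothesis you rely on that is not spelled out in the paper's statement is boundedness of $D$ (needed for the compact embedding $H^1_0(D)\hookrightarrow L^2(D)$ and hence for discreteness of the spectra); this is indeed assumed in the source and is implicit whenever one speaks of ``the $k$-th eigenvalue''. Your handling of the two convergence modes is the right one: dominated convergence for the principal part (a.e.\ convergence plus uniform $L^\infty$ bound on $a_{ij}^n$), and the pairing of weak-$*$ $L^\infty$ convergence of $a_0^n$ against the strong $L^1$ convergence of $(w^n)^2$ for the zeroth-order term.
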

\begin{thm}\label{thm:A1}
(9.29 of \cite{gilbarg1977elliptic}) Let $u \in W^{2, p}(S) \cap C^{0}(\bar{S})$ satisfy $L u=f$ in $S, u=\varphi$ on $\partial S$ where $f \in L^{p}(S), \varphi \in C^{\beta}(\bar{S})$ for some $\beta>0$, and suppose that $\partial S$ satisfies a uniform exterior cone condition. Then $u \in C^{\alpha}(\bar{S})$ for some $\alpha >0$.\end{thm}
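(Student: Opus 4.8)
The plan is to assemble the global Hölder estimate from two ingredients of very different character: interior regularity, which is essentially free from the hypothesis $u\in W^{2,p}(S)$, and a boundary Hölder estimate proved pointwise at every $x_{0}\in\partial S$ by a barrier argument that exploits the uniform exterior cone condition; a standard interpolation/covering argument then patches these together on $\bar S$.

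First I would record the interior estimate. Since $u\in W^{2,p}(S)$ solves the uniformly elliptic equation $Lu=f$ with $f\in L^{p}$, on any interior ball $B_{d/2}(x)$ with $d=\operatorname{dist}(x,\partial S)$ one has Hölder control of $u$: for $p>n$ this is immediate from the Sobolev embedding $W^{2,p}\hookrightarrow C^{1,1-n/p}$, and at the borderline exponent from the interior Hölder estimate for strong solutions in \cite{gilbarg1977elliptic}. Quantitatively this yields an oscillation decay
\begin{equation*}
\operatorname{osc}_{B_{\rho}(x)}u\;\le\;C\Big(\tfrac{\rho}{d}\Big)^{\alpha_{0}}\Big(\operatorname{osc}_{B_{d/2}(x)}u+d^{\,2-n/p}\,\|f\|_{L^{p}(S)}\Big),\qquad \rho\le d/2,
\end{equation*}
with $\alpha_{0}$ and $C$ depending only on $n$, $p$ and the ellipticity constants.

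Next, the boundary estimate. Fix $x_{0}\in\partial S$ and let $\mathcal C$ be the exterior cone at $x_{0}$, whose opening is bounded below independently of $x_{0}$. In the model case $L=-\Delta$ one works in polar coordinates $(r,\omega)$ centered at $x_{0}$ with the first Dirichlet eigenfunction $\Phi>0$, eigenvalue $\mu_{1}>0$, of the spherical Laplacian on the cap $\Sigma=S^{n-1}\setminus\mathcal C$: the function $w_{0}(x)=r^{\sigma}\Phi(\omega)$ with $0<\sigma$ chosen so that $\sigma(\sigma+n-2)<\mu_{1}$ satisfies $-\Delta w_{0}\ge c\,r^{\sigma-2}\Phi>0$ near $x_{0}$, vanishes on $\mathcal C$, and obeys $0\le w_{0}\le A\,r^{\sigma}$. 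Modifying $w_{0}$ to absorb the variable leading coefficients (via the ellipticity bounds) and the lower-order terms of $L$ produces, on a small ball $B_{R}(x_{0})$, a supersolution of $L$ with the same decay. Comparing $u$ with suitable multiples of this barrier plus the boundary oscillation of $\varphi$ (using $|\varphi(x)-\varphi(x_{0})|\le[\varphi]_{C^{\beta}}|x-x_{0}|^{\beta}$ on $\partial S$, the finiteness of $\|u\|_{C^{0}(\bar S)}$, and $\|f\|_{L^{p}}$), via the maximum principle for strong solutions of $L$, and iterating the estimate over dyadic scales $R\to0$ (the oscillation-decay argument of \cite{gilbarg1977elliptic}), gives $|u(x)-\varphi(x_{0})|\le C\,|x-x_{0}|^{\gamma}$ near $x_{0}$ for some $\gamma>0$ built from $\sigma$ and $\beta$, with $C$ and $R$ uniform in $x_{0}$ by the uniformity of the cone condition.

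Finally I would glue the two: for $x,y\in\bar S$ split into the case where $|x-y|$ is small relative to $\operatorname{dist}(\{x,y\},\partial S)$, handled by the interior oscillation decay, and the complementary case, handled by passing to the nearest boundary point(s) and invoking the boundary estimate twice; balancing the two Hölder exponents yields $u\in C^{\alpha}(\bar S)$ for some $\alpha=\alpha(\alpha_{0},\gamma)>0$. I expect the barrier step to be the main obstacle: at a boundary point where only an \emph{exterior cone}, and not an exterior sphere, is available, one must construct a genuine supersolution of the full operator $L$ --- whose coefficients are only bounded and measurable --- decaying like a fixed positive power of the distance, and one must check that both the power $\sigma$ and the radius $R$ can be taken uniform over $\partial S$ from the uniform cone condition.
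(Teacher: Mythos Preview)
The paper does not give its own proof of this statement: Theorem~\ref{thm:A1} is quoted verbatim from \cite{gilbarg1977elliptic} as one of the preliminary results recalled in Section~\ref{sec:analysis}, with no argument supplied. So there is nothing in the paper to compare your proposal against.

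That said, your sketch is essentially the standard proof of Theorem~9.29 in \cite{gilbarg1977elliptic}: barrier construction at boundary points using the exterior cone (via a homogeneous function built from the first Dirichlet eigenfunction on the spherical cap complementary to the cone), the Aleksandrov--Bakelman--Pucci maximum principle for strong solutions to control $u$ against the barrier, oscillation decay by scaling, interior Hölder estimates for $W^{2,p}$ solutions, and a patching of interior and boundary estimates. Your identification of the main technical obstacle --- constructing a supersolution of the full variable-coefficient operator $L$ (not just $-\Delta$) with uniform power decay at each boundary point --- is exactly right; in Gilbarg--Trudinger this is handled by Lemma~9.28, which you would need to invoke or reproduce to make the argument complete.
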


Now, we prove spectral exactness of the expansion method in $V_0$ and $N$ in Theorems~\ref{thm:V} and \ref{thm:M}. We give an example of calculating solutions and the rate of convergence in $V_0$ of eigenvalues for the relaxed problem on an interval in Example~\ref{ex:L}. We provide intuition for efficient implementation of the expansion method in Remark~\ref{rem:A}.
\begin{thm} \label{thm:V}
The eigenvalues of the Schr\"odinger operator
\begin{equation}
    \hat{H}(V_0) = -\Delta_g + V(x), \qquad V(x) =\left\{\begin{array}{ll}
         0, & x\in \Omega \\
         V_0, & x \not\in \Omega
    \end{array} \right.
\end{equation}
acting on a bounded Riemannan manifold $(\mathcal{M},g)$, with $
\Omega$ smooth in $(\mathcal{M},g)$ converge monotonically to the eigenvalues of $-\Delta_g$ with Dirichlet boundary conditions on $\Omega$ as $V_0\rightarrow \infty$.
\end{thm}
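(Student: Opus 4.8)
The plan is to argue entirely through the quadratic forms and the Courant--Fischer min--max principle. For $V_0>0$ set
$q_{V_0}[u]=\int_{\mathcal M}|\nabla_g u|^2\,dV_g+V_0\int_{\mathcal M\setminus\Omega}|u|^2\,dV_g$
on the form domain $H^1(\mathcal M)$ (or the subspace encoding whatever boundary condition is imposed on $\partial\mathcal M$); this is the closed form whose associated self-adjoint operator is $\hat H(V_0)$, and it has compact resolvent because $\mathcal M$ is bounded, so its spectrum is a discrete sequence $\lambda_1(V_0)\le\lambda_2(V_0)\le\cdots$. First I would establish \emph{monotonicity}: if $V_0\le V_0'$ then $q_{V_0}[u]\le q_{V_0'}[u]$ on the common form domain, so $\lambda_k(V_0)\le\lambda_k(V_0')$ for every $k$ by min--max; each $\lambda_k(V_0)$ is therefore non-decreasing in $V_0$. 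Next, a \emph{uniform upper bound}: let $\lambda_k^D(\Omega)$ be the $k$-th Dirichlet eigenvalue of $-\Delta_g$ on $\Omega$ with $L^2(\Omega)$-orthonormal eigenfunctions $\varphi_1,\dots,\varphi_k\in H^1_0(\Omega)$; since $\Omega$ is smooth, their zero extensions $\tilde\varphi_j$ lie in $H^1(\mathcal M)$ and vanish off $\Omega$, so $q_{V_0}[\tilde\varphi_j]=\int_\Omega|\nabla_g\varphi_j|^2$ regardless of $V_0$, and testing min--max against $\operatorname{span}(\tilde\varphi_1,\dots,\tilde\varphi_k)$ gives $\lambda_k(V_0)\le\lambda_k^D(\Omega)$. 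Combining, $\lambda_k(V_0)$ increases to a limit $\mu_k\le\lambda_k^D(\Omega)$.

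The crux is the reverse inequality $\mu_k\ge\lambda_k^D(\Omega)$. Take $L^2(\mathcal M)$-orthonormal eigenfunctions $u_1^{V_0},\dots,u_k^{V_0}$ of $\hat H(V_0)$ for $\lambda_1(V_0),\dots,\lambda_k(V_0)$. From $q_{V_0}[u_j^{V_0}]=\lambda_j(V_0)\le\lambda_k^D(\Omega)$ we get a uniform bound on $\|u_j^{V_0}\|_{H^1(\mathcal M)}$ and the decay $\|u_j^{V_0}\|_{L^2(\mathcal M\setminus\Omega)}^2\le\lambda_k^D(\Omega)/V_0\to0$. By the Rellich--Kondrachov theorem, along a subsequence $V_0\to\infty$ we have $u_j^{V_0}\rightharpoonup u_j$ weakly in $H^1(\mathcal M)$ and strongly in $L^2(\mathcal M)$; the limits $u_j$ are $L^2$-orthonormal, vanish a.e.\ on $\mathcal M\setminus\Omega$, and hence (smoothness of $\partial\Omega$) belong to $H^1_0(\Omega)$. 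Testing the weak eigenvalue equation against the zero extension of an arbitrary $v\in H^1_0(\Omega)$, the potential term drops out and $\int_{\mathcal M}\nabla_g u_j^{V_0}\cdot\nabla_g v=\lambda_j(V_0)\int_{\mathcal M}u_j^{V_0}v$; letting $V_0\to\infty$ shows each $u_j$ solves $-\Delta_g u_j=\mu_j u_j$ in $\Omega$ with $u_j\in H^1_0(\Omega)$. Since $\operatorname{span}(u_1,\dots,u_k)$ is $k$-dimensional and carries Rayleigh quotient $\le\max_{j\le k}\mu_j=\mu_k$, min--max gives $\lambda_k^D(\Omega)\le\mu_k$, so $\mu_k=\lambda_k^D(\Omega)$. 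An alternative for this last step is to verify strong resolvent convergence of $\hat H(V_0)$ to the Dirichlet Laplacian on $\Omega$ using $C_c^\infty(\Omega)$ as a core in Lemma~\ref{lem:book1} and then invoke the spectral inclusion of Theorem~\ref{thm:book1}, with the upper bound above upgrading inclusion to equality.

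I expect the main obstacle to be precisely the identification of the limit in the second paragraph: showing that the weak-$H^1$ limits of the confined eigenfunctions genuinely lie in $H^1_0(\Omega)$ and solve the Dirichlet eigenvalue problem, and arranging the argument through orthonormal families and min--max so that it is insensitive to multiplicities (one cannot in general track individual eigenfunctions through the limit). The remaining ingredients---compactness of the resolvent of $\hat H(V_0)$, monotonicity of the forms in $V_0$, and the zero-extension embedding $H^1_0(\Omega)\hookrightarrow H^1(\mathcal M)$---are routine once $\partial\Omega$ is assumed smooth.
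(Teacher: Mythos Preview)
Your argument is correct and shares the paper's overall framework: both proofs use the Courant--Fischer min--max characterization and obtain monotonicity by comparing Rayleigh quotients for different values of $V_0$. The paper then argues, essentially formally, that as $V_0\to\infty$ the only test functions with finite Rayleigh quotient are those supported in $\Omega$, and writes the limiting min--max directly over subspaces of $H^1_0(\Omega)$. Your treatment of the reverse inequality is more careful: you supply the missing compactness step by extracting weak-$H^1$/strong-$L^2$ limits of eigenfunctions via Rellich--Kondrachov, verifying the limits lie in $H^1_0(\Omega)$ and satisfy the Dirichlet problem, and then feeding the resulting $k$-dimensional subspace back into min--max. This buys you a rigorous justification of the interchange of limit and inf--sup that the paper leaves implicit, at the cost of a somewhat longer argument; the paper's version is shorter but relies on the reader accepting that passage without further comment.
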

\begin{proof}
By considering the volume form on the manifold, we have the inner product:
\begin{equation}\langle f_1, f_2\rangle_g = \int_\mathcal{M} \overline{f_1(x)}f_2(x) (\operatorname{det} g)^{1/2} d x_1 \cdots d x_n\end{equation}
Now, from the Rayleigh quotient of an elliptic linear operator $\mathcal{L}$ on a Riemannian manifold $(\mathcal{M},g)$,
\begin{equation}
    R(\mathcal{L},u) = \frac{\langle u,\mathcal{L}u\rangle_g}{\langle u,u \rangle_g},
\end{equation}
we have
\begin{equation}
    R(\hat{H}(V_0),u) = \frac{\langle u,-\Delta_g u\rangle_g}{\langle u,u \rangle_g} + \frac{\langle u,Vu \rangle_g}{\langle u,u \rangle_g}.
\end{equation}
By the Courant-Fischer formula,
\begin{equation}
    \lambda_k(\hat{H}(V_0)) = \inf_{\mathcal{Q}\in \mathcal{Q}_k}\sup_{u\in \mathcal{Q}}\left(R(\hat{H},u)\right)
\end{equation}
with $\mathcal{Q}_{k}$ being the family of subspaces of $H_{0}^{1}(\mathcal{M})$ of dimension $k$, we obtain the following inequality,
\begin{equation}
    \lambda_k(\hat{H}(V_0^*)) \geq \lambda_k(\hat{H}(V_0)), \quad \text{for } V_0^* > V_0,
\end{equation}
giving us monotonicity. We also have for $u\in H^1_0(\mathcal{M})$ and as $V_0\rightarrow \infty$, $R(\hat{H}(V_0),u) < \infty$ if and only if $\operatorname{supp}(u) \subseteq \Omega$ almost everywhere. Hence, we have
\begin{equation}
    \lim_{V_0\rightarrow\infty}\lambda_k(\hat{H}(V_0)) = \inf_{\mathcal{Q}^*\in \mathcal{Q}^*_k}\sup_{u\in \mathcal{Q}^*}\left(R(\hat{H},u)\right)
\end{equation}
with $\mathcal{Q}^*_{k}$ being the family of subspaces of $H_{0}^{1}(\Omega)$ of dimension $k$. This is precisely the Courant-Fischer definition of the eigenvalues of the Laplace--Beltrami operator with Dirichlet boundary conditions on $\Omega$.
\end{proof}
\begin{ex}\label{ex:L}
Consider the regions $S = (0,2)$ and $\Omega = (0,1)$. The eigenvalues of the Helmholtz equation on $\Omega$,
\begin{equation}\label{eq:ex1}
    -v_k'' = \mu_k v_k, \qquad v_k(0) = v_k(1) = 0,
\end{equation}
can be approximated by the eigenvalues of the Schr\"odinger operator $\hat H_{V_0} = -\partial_x^2 + V(x;V_0)$ on $S$ with $V(x;V_0) = V_0 \chi_{S\setminus \Omega} (x)$ and large $V_0 \gg 1$, with
\begin{equation}\label{eq:ex2}
\hat H_{V_0} u_k = \lambda_k(V_0) u_k,\qquad u(0) = u(2) = 0,
\end{equation}
and rate of convergence
\begin{equation}
    |\lambda_k(V_0) - \mu_k| \sim \frac{1}{\sqrt{V_0}}
\end{equation}
up to a constant, as $V_0 \uparrow \infty.$
\end{ex}

\begin{proof}
We have \eqref{eq:ex1} has eigenvalues $\mu_k = \pi^2 k^2$ for $k \in \mathbb N \setminus \{0\}$. Solutions to \eqref{eq:ex2} are of the form:
\begin{equation}\label{eq:ex3}
    u_k = \begin{cases}
    \sin \sqrt{\lambda_k} x, & x \in(0,1] \\ \frac{\sin \sqrt{\lambda_k}}{\sinh \sqrt{V_0 - \lambda_k}} \sinh( \sqrt{V_0 - \lambda_k} (2-x) ), & x \in(1,2).
    \end{cases}
\end{equation}
By setting $u_k \in C^1(0,2)$, we arrive at
\begin{equation}
\begin{aligned}
    \sqrt{\lambda_k} \cot \sqrt{\lambda_k} = -\sqrt{V_0 - \lambda_k} \coth \sqrt{V_0 - \lambda_k} \implies
    \frac{\sqrt{\lambda_k}}{\sin \sqrt \lambda_k} \sim \sqrt{V_0},
    \end{aligned}
\end{equation}
as $V_0 \uparrow \infty$. We then have by Taylor expansion of $\sin(\cdot)$ about $\sqrt{\mu_k}$,
\begin{equation}
\begin{aligned}
    \sin \sqrt{\lambda_k} = \sin(\sqrt{\lambda_k} - \sqrt{\mu_k}+ \sqrt{\mu_k}) \approx \pm(\sqrt{\lambda_k} - \sqrt{\mu_k}) \sim \frac{\sqrt{\lambda_k}}{\sqrt{V_0}} \sim \frac{1}{\sqrt{V_0}}
    \end{aligned}
\end{equation}
since $\sin\sqrt{\mu_k}=0$. The last relation above arises from $\sqrt{\lambda_k} \sim \sqrt{\mu_k}$, a constant. Hence altogether,
\begin{equation}
    |\lambda_k- \mu_k| = |\sqrt{\lambda_k} -\sqrt{\mu_k}| \cdot |\sqrt{\lambda_k} + \sqrt{\mu_k}| \sim \frac{1}{\sqrt{V_0}}.
\end{equation}
\end{proof}
\begin{thm} \label{thm:M}
Given a complete orthonormal basis $ \mathcal F^S_\infty \subset H^1_0(S)$ of Laplace--Beltrami eigenfunctions on a bounded smooth domain $S =(\mathcal M, g)$, the Dirichlet eigenvalues of the $N$-dimensional operator $H_N$ where $H_{N_{ij}} = \langle \phi_i, \hat{H} \phi_j\rangle$ for $i,j\leq N$ converge to those of $\hat{H}$ as $N\rightarrow\infty$ where 
\begin{equation}
    \hat{H} = -\Delta_g + V(x), \qquad
        V(x) \in  L^\infty(S,\mathbb R).
\end{equation}
The results in this theorem hold for Neumann and periodic boundary conditions as well, using the appropriate basis and Sobolev space.
\end{thm}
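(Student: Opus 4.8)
The plan is to run the classical Rayleigh--Ritz (Galerkin) convergence argument for the eigenvalue problem of a self-adjoint operator that is bounded below and has compact resolvent. First I would fix the functional-analytic framework. Since $V \in L^\infty(S,\mathbb{R})$, the operator $\hat H = -\Delta_g + V$ is self-adjoint on $L^2(S)$ with the same domain $H^2(S)\cap H^1_0(S)$ as $-\Delta_g$, is bounded below by $-\|V\|_{L^\infty}$, and inherits the compact resolvent of $-\Delta_g$ on the bounded smooth $S$; hence its spectrum is discrete, $\lambda_1(\hat H)\le\lambda_2(\hat H)\le\cdots\to\infty$, and is given by the Courant--Fischer formula for the bounded symmetric quadratic form $a(u,v)=\langle\nabla u,\nabla v\rangle_g+\langle u,Vv\rangle_g$ on the form domain $H^1_0(S)$, which is coercive after the shift $a(u,u)+(\|V\|_{L^\infty}+1)\|u\|_{L^2}^2\gtrsim\|u\|_{H^1}^2$ (Poincar\'e). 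Because the $\phi_n$ are $L^2$-orthonormal, $H_{N_{ij}}=a(\phi_i,\phi_j)$ is exactly the matrix of $a$ in $\phi_1,\dots,\phi_N$, so the eigenvalues $\lambda_k(H_N)$ are the min--max values of $a$ over the $k$-dimensional subspaces of $V_N:=\operatorname{span}\{\phi_1,\dots,\phi_N\}$.

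With this in hand, the lower bound and monotonicity are immediate: $V_N\subseteq V_{N+1}\subseteq H^1_0(S)$, so minimizing the max-Rayleigh-quotient over the smaller family of subspaces only increases it, giving $\lambda_k(\hat H)\le\lambda_k(H_{N+1})\le\lambda_k(H_N)$ for all $N\ge k$; hence $\lambda_k(H_N)$ decreases to a limit $\mu_k\ge\lambda_k(\hat H)$. For the matching upper bound I would use density: since each $\phi_n$ is an eigenfunction of $-\Delta_g$, the family $\{\phi_n/\sqrt{\lambda_n}\}$ is an orthonormal basis of $H^1_0(S)$ for the Dirichlet inner product, so $\bigcup_N V_N$ is dense in $H^1_0(S)$. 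Taking $L^2$-orthonormal eigenfunctions $u_1,\dots,u_k\in H^1_0(S)$ of $\hat H$ for $\lambda_1(\hat H),\dots,\lambda_k(\hat H)$, I would pick $u_i^N\in V_N$ with $\|u_i^N-u_i\|_{H^1_0}\to0$; for $N$ large their $L^2$ Gram matrix is within $\tfrac12$ of the identity, so $W_N:=\operatorname{span}\{u_1^N,\dots,u_k^N\}\subseteq V_N$ is $k$-dimensional and $\sum_i|c_i|^2\le2\|\sum_i c_iu_i^N\|_{L^2}^2$. Then for $w=\sum_ic_iu_i^N\in W_N$, comparing against $\tilde w=\sum_ic_iu_i$ and using boundedness of $a$ and of $\|\cdot\|_{L^2}$ on $H^1_0(S)$ together with $a(\tilde w,\tilde w)=\sum_i\lambda_i(\hat H)|c_i|^2\le\lambda_k(\hat H)\|\tilde w\|_{L^2}^2$, one obtains $\sup_{0\ne w\in W_N}a(w,w)/\|w\|_{L^2}^2\le\lambda_k(\hat H)+o(1)$, whence $\lambda_k(H_N)\le\lambda_k(\hat H)+o(1)$ and $\mu_k=\lambda_k(\hat H)$.

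The step I expect to be the main obstacle is precisely this last estimate: converting the $H^1$-closeness of the approximants $u_i^N$ to the true eigenfunctions into a uniform-over-$W_N$ closeness of Rayleigh quotients. This needs the quantitative control of $\sum_i|c_i|^2$ by $\|w\|_{L^2}^2$ (equivalently, uniform linear independence of the $u_i^N$) followed by a careful two-term comparison of $a(w,w)$ with $a(\tilde w,\tilde w)$ and of $\|w\|_{L^2}^2$ with $\|\tilde w\|_{L^2}^2$, with errors that are $o(1)$ relative to $\|c\|_{\ell^2}^2$; it is the standard but slightly technical heart of Galerkin eigenvalue convergence and could, if desired, simply be cited. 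As an alternative route to the spectral inclusion half, one may instead verify that $\tilde H_N:=P_N\hat H P_N$ (bounded and self-adjoint on $L^2(S)$) converges to $\hat H$ in the strong resolvent sense---using that $P_N$ commutes with $-\Delta_g$ on $H^2(S)\cap H^1_0(S)$ and $V$ is bounded, so $\tilde H_N P_N\psi\to\hat H\psi$ on that core---and then invoke Lemma~\ref{lem:book1} and Theorem~\ref{thm:book1}; this yields $\sigma(\hat H)\subseteq\lim_N\sigma(\tilde H_N)$ but still requires the min--max lower bound above to exclude spurious eigenvalues, so the direct argument remains the cleaner one. Finally, for the Neumann and periodic cases I would repeat the argument verbatim with $H^1_0(S)$ replaced by $H^1(S)$ (Neumann) or the periodic Sobolev space: the embedding into $L^2(S)$ stays compact on the bounded smooth $S$, so the relevant $-\Delta_g$ still has discrete spectrum and a complete eigenbasis with dense span in the corresponding form domain, and every step carries over unchanged.
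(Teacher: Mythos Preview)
Your proposal is correct and follows a genuinely different route from the paper. You run the classical Rayleigh--Ritz (min--max) argument directly on the form $a(u,v)=\langle\nabla u,\nabla v\rangle_g+\langle u,Vv\rangle$: the inclusion $V_N\subseteq H^1_0(S)$ gives the uniform lower bound and monotonicity $\lambda_k(\hat H)\le\lambda_k(H_{N+1})\le\lambda_k(H_N)$, and density of $\bigcup_N V_N$ in the form domain (which you justify via the eigenbasis property of the $\phi_n$) gives the matching upper bound by the approximation-of-eigenfunctions argument you sketch. The paper, by contrast, works at the operator level: it first invokes Lemma~\ref{lem:book1} and Theorem~\ref{thm:book1} to get $\sigma(\hat H)\subseteq\lim_N\sigma(H_N)$ via strong resolvent convergence of $P_N\hat HP_N$, and then obtains the reverse inclusion by rewriting $H_Nu_N=(-\Delta_g+P_NV)u_N$ on eigenvectors and appealing to Theorem~\ref{thm:book2} (continuity of eigenvalues under the coefficient convergence $P_NV\to V$).

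What each buys: your variational argument is more elementary, self-contained, and yields the sharper statement that each $\lambda_k(H_N)$ decreases monotonically to $\lambda_k(\hat H)$, not merely set-wise spectral convergence; it also cleanly sidesteps a delicate point in the paper's second half, namely the uniform $L^\infty$ bound on $P_NV$ needed to invoke Theorem~\ref{thm:book2}. The paper's route, on the other hand, packages the work into two black-box citations and makes explicit the connection to resolvent convergence, which is conceptually useful if one later wants norm-resolvent estimates or convergence of spectral projections. You correctly anticipate this alternative in your final paragraph and correctly diagnose that strong resolvent convergence alone only gives one inclusion; the paper fills the gap with Theorem~\ref{thm:book2} rather than with your min--max lower bound.
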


\begin{proof}
We first note
\begin{equation}H_N: \mathcal F^S_N \to  \mathcal F^S_N \qquad \hat H :H^1_0(S) \to L^2(S),\end{equation}
where $\mathcal F^S_N = \operatorname{span}\{\phi_j\}_{j=1}^N$. We also have, by the definition of $H_N$, with $u = \sum_{j=1}^\infty c_j \phi_j$:
\begin{equation}
\begin{aligned}
H_N u &:= \sum_{i=1}^N \left(\sum_{k=1}^N\langle\phi_i,\hat{H} \phi_k\rangle c_k \right) \phi_i = \sum_{j=1}^N\langle \phi_j, \hat{H}u\rangle \phi_j = P_N \hat{H}u\\
P_N v &:= \sum_{j=1}^N \langle \phi_j, v\rangle \phi_j
\end{aligned}
\end{equation}
 Furthermore, by convention we may extend the domain of $H_N$ to $H^1_0(S)$ by the following extension, which we will now use in the proof:
 \begin{equation}H_N u := P_N \hat H P_N u.\end{equation}
Without loss of generality, because $V(x)$ is bounded we can assume further that $V(x) \in [0,\infty)$ since the resulting spectra are merely shifted by a constant $\alpha$ when adding $\alpha$ to $V(x)$, hence we have $\operatorname{ker}(\hat H^*) = \emptyset$ by injectivity and self-adjointness, giving us  $\overline{\operatorname{ran}(\hat H)} = L^2(S)$. We consider the set $\mathfrak{D}_0 =  \mathcal F^S_\infty \cap H^1_0(S)$, and we have $\overline{\mathfrak D_0} = L^2(S) = \overline{H^1_0(S)}$, and $\overline{H \mathfrak D_0} = \overline{\operatorname{span}\{H\phi_j\}_{j=1}^\infty} \cap \overline{L^2(S)} = L^2(S)$, by $V \in L^\infty(S)$. Hence, the graph $\{( x , \hat H x ) : x \in \mathfrak D_0\}$ is dense in $\{(x,\hat H x) : x \in \mathfrak D (\hat H)\}$, therefore $\mathfrak D_0$ is a \textit{core} of $\hat H$. Furthermore, we have for all $\psi \in \mathfrak D_0$ that $P_N \psi \in \operatorname{span}\{\phi_j\}_{j=1}^N$ and $H_N \psi = P_N \hat H P_N \psi \to \hat H P_N \psi$, so by Lemma \ref{lem:book1} we have strong convergence in the resolvent sense, and hence the conditions for Theorem \ref{thm:book1} are met, and we have 
\begin{equation}\lim_{N\to \infty} \sigma(H_N) \supseteq \sigma(\hat H).\end{equation}
Now, it is well-known these operators have purely point spectra. We consider $V_N =P_N V$ and consider $\lambda_N \to \lambda_*$, some converging sequence of eigenvalues of $H_N$ with corresponding eigenvectors $u_N$. We have $P_N u_N = u_N$ and $P_N \Delta_g u_N = \Delta_g u_N$, giving us
\begin{align*}
H_N u_N = (-\Delta_g + V_N) u_N = \lambda_N u_N &\implies \lambda_N \in \sigma(-\Delta_g + V_N), \\
\sigma(-\Delta_g + V_N) \longrightarrow \sigma(\hat H) &\implies \lambda_* \in \sigma(\hat H)
\end{align*}
by Theorem~\ref{thm:book2}, giving us
\begin{equation}\lim_{N\to \infty} \sigma(H_N) \subseteq \sigma(\hat H).\end{equation}
Altogether, we have the desired result,
\begin{equation}\lim_{N\to \infty} \sigma(H_N) = \sigma(\hat H).\end{equation}
Similar proofs can be made for the Neumann and periodic boundary cases.
\end{proof}
\begin{rem}\label{rem:A} For fixed $\Omega,\ V_0,$ and $N$, an efficient implementation of this method is to seek an integrable domain $S \supset \Omega$ to minimize the following value involving the $L^2$-induced norm of the difference of the operators acting on the finite-dimensional space $\mathcal F^S_N$:
\begin{equation}\tau_{\Omega}(S) := V_0^{-1}\cdot \left\|\left.[\hat H - H_{N}(S)]\right\vert_{\mathcal F^S_N}\right\|_{\mathcal B(L^2(\Omega),L^2(\Omega))} =   V_0^{-1} \cdot \left\| V - \sum_{j=1}^N \langle \phi_j, V\rangle \phi_j \right\|_{L^\infty(\Omega)}\end{equation}
This procedure is equivalent to fitting a domain $\Omega$ properly into a solvable set $S$ so that the potential $V$ that characterizes $\Omega$ is well-approximated by $P_N V$.
\end{rem}
\section{Numerical Accuracy}\label{sec:numerics}

Note that contrary to Theorem \ref{thm:V}, in practice, the method loses accuracy if $V_0$ is chosen to be too large, due to floating point round-off errors. We provide a numerical example here. One can identify the shape of a triangle given the spectrum of the solution to the Helmholtz equation \cite{triangle}, and a formula can be derived for the Laplacian eigenvalues $\lambda_n$ of an equilateral triangle with Dirichlet boundary conditions \cite{diab}, for positive integers $p$ and $q$,
\begin{equation}\label{eq:trian}
    \lambda_n \equiv \lambda_{pq} = \Big(\frac{4\pi}{3}\Big)^2(p^2+q^2-pq),\ 1\leq q\leq p/2,
\end{equation}
where $\lambda_n$ is a multiple eigenvalue with multiplicity 2 if $p \neq 2q$. $\lambda_n$ is in units ${1}/{a^2}$ and $a$ is the side length of the triangle. In \autoref{fig:ac2}, we compare these known eigenvalues with those computed using the expansion method with varying $V_0$. Although we showed monotonic convergence as $V_0 \to \infty$ in \ref{sec:analysis}, a properly chosen value would be at about $V_0 \approx 2.6 \times 10^6$, depending on how many eigenvalues one wishes to compute and the chosen domain.

\begin{figure}[h!]\centering
\includegraphics[width=.5\linewidth]{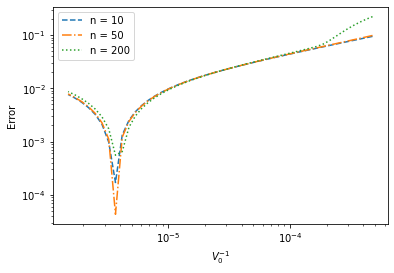}
\caption{Average relative error for the first $n$ eigenvalues for varying values of $V_0^{-1}$.}\label{fig:ac2}
\end{figure}

We now solve for the L-shaped domain modes numerically using the expansion method. Here, we use $\mathcal F^S_N = \{\phi_n\}_{n=1}^N$ on $S = (0,2)^2$ and $\Omega = S\setminus [1,2)^2$ with 
\begin{equation}\phi_n = \phi_{(n_1,n_2)} = 
\sqrt{\frac{2}{a_{1}}} \sin \left(\frac{\pi}{a_{1}} n_{1} x_{1}\right) \sqrt{\frac{2}{a_{2}}} \sin \left(\frac{\pi}{a_{2}} n_{2} x_{2}\right) =  \sin \left(\frac{\pi}{2} n_{1} x_{1}\right) \sin \left(\frac{\pi}{2} n_{2} x_{2}\right),
\end{equation}
just as in \cite{expmethod}. We provide examples of the computed eigenmodes in \autoref{fig:Lshape}. This L-shaped domain is common in the literature, as it is a simple construction of a domain
with no closed form solution \cite{trefethen2006computed, yuan2009bounds,fox1967approximations}. In \autoref{tab:Lshape}, we provide the computed eigenvalues corresponding to the provided eigenmodes, along with those computed using a second-order finite difference operator (see \cite{leveque1992numerical}), with uniform grid spacings $h$ for both the $x$ and $y$ axes:
\begin{align*}H^{\operatorname{FD}} &= -( I \otimes D^2_x + D_y^2 \otimes I) + V_{\operatorname{d}} \\
D_x^2 &= D_y^2 =\frac{1}{h^{2}}\left(\begin{array}{cccc}
-2 & 1 & & \\
1 & \ddots & \ddots & \\
& \ddots & \ddots & 1 \\
& & 1 & -2
\end{array}\right)\\
V_{\operatorname{d}} &= \left(\begin{array}{cccccccc}
v_{11} &  &  &  &  &  &  &  \\
 & v_{21} &  &  &  &  &  &  \\
 &  & \ddots &  &  &  &  &  \\
 &  &  & v_{N 1} &  &  &  &  \\
 &  &  &  & v_{22} &  &  &  \\
 &  &  &  &  & \ddots &  &  \\
& & &  & & & & v_{N N}
\end{array}\right) \\ 
v_{nm} &= V(x_n,y_m).
\end{align*}
The results in Table~\ref{tab:Lshape} of the computation of Laplacian eigenvalues for the L-shaped domain using the expansion method and FD are compared with the known values from \cite{trefethen2006computed}, which were remarkably computed with up to 8 digits of accuracy. This comparison provides numerical validation of the expansion method.

\begin{figure}[h!]\centering
 
\includegraphics[width=.2\linewidth]{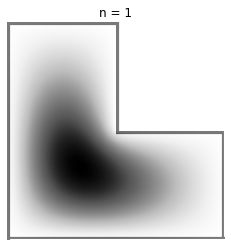}
\includegraphics[width=.2\linewidth]{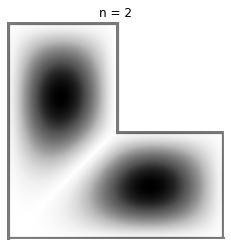}
\includegraphics[width=.2\linewidth]{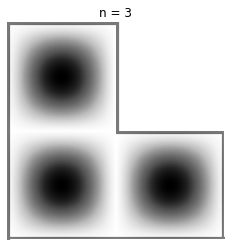}\\
\includegraphics[width=.2\linewidth]{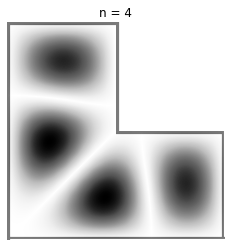}
\includegraphics[width=.2\linewidth]{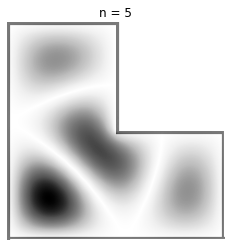}
\includegraphics[width=.2\linewidth]{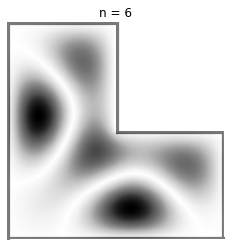}\\
\includegraphics[width=.2\linewidth]{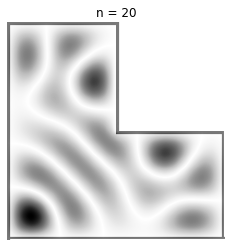}
\includegraphics[width=.2\linewidth]{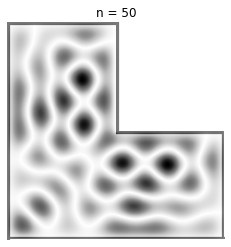}
\includegraphics[width=.2\linewidth]{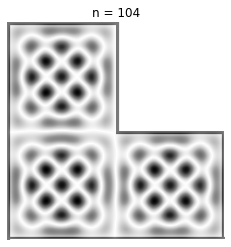}
\caption{Various states $|\psi|$ for the L-shaped region on a plane, computed using the expansion method.}
\label{fig:Lshape}
 
\end{figure}

\begin{table}[h!]
 
    \begin{minipage}{.5\linewidth}
      \centering
      \scalebox{0.75}{
        \begin{tabular}{|c|c|c|c|}
        \hline
           $n$ &  FD Method & Exp. Method &   Known Values \\
        \hline
        \hline
           1 &   9.33328 &    9.63359 &     9.63972 \\
           2 &   14.8927 &   15.1964  &    15.1973  \\
           3 &   19.4634 &    19.7385  &    19.7392  \\
           4 &   29.2480 &   29.5209  &    29.5215  \\
           5 &   31.2302 &   31.8982  &    31.9126  \\
           6 &   40.3540 &   41.4629  &    41.4745  \\
          20 &   98.9878 &  101.585   &   101.605   \\
          50 &   239.487 &  250.777   &   250.785   \\
         104 &   462.102 &  493.543   &   493.480    \\
        \hline
        \end{tabular}}
    \end{minipage}%
    \begin{minipage}{.5\linewidth}
      \centering
        \scalebox{0.75}{
        \begin{tabular}{|c|c|c|c|}
        \hline
           $n$ &  FD Method & Exp. Method &   Known Values \\
        \hline
        \hline
           1 &     9.69329 &            10.1213 &     9.63972 \\
           2 &    14.9296  &            15.7156 &    15.1973  \\
           3 &    19.4097  &            20.1868 &    19.7392  \\
           4 &    28.5903  &            29.8785 &    29.5215  \\
           5 &    31.171   &            32.8286 &    31.9126  \\
           6 &    39.4412  &            42.9975 &    41.4745  \\
          20 &    91.6147  &           105.747  &   101.605   \\
          50 &   179.955   &           255.717  &   250.785   \\
         104 &   306.101   &           514.121  &   493.480    \\
        \hline
        \end{tabular}   }
    \end{minipage} 
        \caption{Computed Laplacian eigenvalues of the L-shaped region. Both schemes used a discretized 2-D Schr\"odinger operator in $\mathbb{R}^{N \times N}$ and $V_0 = 2.1\times 10^5$. $N=2500$ (left) and $N = 225$ (right). The known values were taken from \cite{trefethen2006computed}.}
\label{tab:Lshape}
 
\end{table}

\section{Applications}\label{sec:applications}
\subsection{Spherical domains.}

The expansion method can be used on a variety of manifolds, and for example, on a spherical surface. The eigenfunctions of the Laplace--Beltrami operator on a sphere are the spherical harmonics $Y$, which are solutions to
\begin{equation}\label{eq:sphhelm}
    \frac{1}{\sqrt{|g|}} \partial_{i}\left(\sqrt{|g|} g^{i j} \partial_{j} Y(\theta,\phi)\right) = \lambda Y(\theta,\phi)
\end{equation}
where
\begin{equation}
g=\left[\begin{array}{cc}
1 & 0 \\
0 & \sin ^{2} \theta
\end{array}\right] .
\end{equation}
Spherical harmonics $Y_{m \ell}$ provide a set of orthonormal functions and thus can be used as a basis. These functions are defined over the indices $m$ (integers) and $\ell$ (non-negative integers), where $Y_{m \ell}$ is defined for $|m|\leq \ell$. These functions are known explicitly ($P^m_\ell$ denoting associated Legendre polynomials),
\begin{equation}\label{eq:sphericalharmonics}
Y_{m \ell}=\left\{\begin{array}{ll}
(-1)^{m} \sqrt{2} \sqrt{\frac{2 \ell+1}{4 \pi} \frac{(\ell-|m|) !}{(\ell+|m|) !}} P_{\ell}^{|m|}(\cos \theta) \sin (|m| \varphi) & \text { if } m<0 \\
\sqrt{\frac{2 \ell+1}{4 \pi}} P_{\ell}^{m}(\cos \theta) & \text { if } m=0 \\
(-1)^{m} \sqrt{2} \sqrt{\frac{2 \ell+1}{4 \pi} \frac{(\ell-m) !}{(\ell+m) !}} P_{\ell}^{m}(\cos \theta) \cos (m \varphi) & \text { if } m>0 .
\end{array}\right.
\end{equation}
The eigenfunctions of the Laplace--Beltrami operator with Dirichlet boundary conditions for some smooth region on a sphere can be expressed in $L^2(S)$ as linear combinations of spherical harmonics,
\begin{equation}\label{eq:lincombsph}
   \psi_{j}(\phi,\theta) = \sum_{\substack{m,\ \ell}} c^{(j)}_{m\ell} Y_{\ell}^m(\phi,\theta).
\end{equation} 
As in \eqref{eq:inprod}, the matrix representation of the Schr\"odinger operator $\hat{H}$ in the space composed of the basis functions is given by
\begin{equation}\label{eq:dischamsph}
    H_{ij} = \int_{S}Y_i(\phi,\theta)\hat{H} Y_j(\phi,\theta) ds,
\end{equation}
where $i$ and $j$ each represent an index pair $(m,\ell)$. By substituting $\left[-\Delta_{g} + V(\phi,\theta)\right]$ for $\hat{H}$ and making a change of variables, we obtain $ds = r\sin (\theta)d\phi d\theta$ on the unit sphere and the discretized Hamiltonian,
\begin{equation} \label{eq:inprod2}
\begin{aligned}
    H_{N_{ij}} &= \langle Y_i, \hat{H}Y_j\rangle\\
    &= \ell(\ell+1)\delta_{ij} + V_0\int_S Y_i(\phi,\theta) Y_j(\phi,\theta) \sin{(\theta)}d\phi d\theta,
\end{aligned}
\end{equation}
for a large value $V_0 \gg 1$. We can then calculate the matrix $H_N$ and its eigenpairs numerically. We then expand the eigenvectors back into the spherical harmonic basis.

Using the presented method, we have calculated and plotted the first twelve states for the half-sphere, octant, and spherical square alongside their planar analogs (in grayscale) to illustrate the utility of the expansion method. We have plotted the absolute value to distinguish the nodal lines.

\begin{figure}[h!]\centering
\includegraphics[ width=.12\linewidth,trim={6cm 2.5cm 5.3cm .5cm},clip]{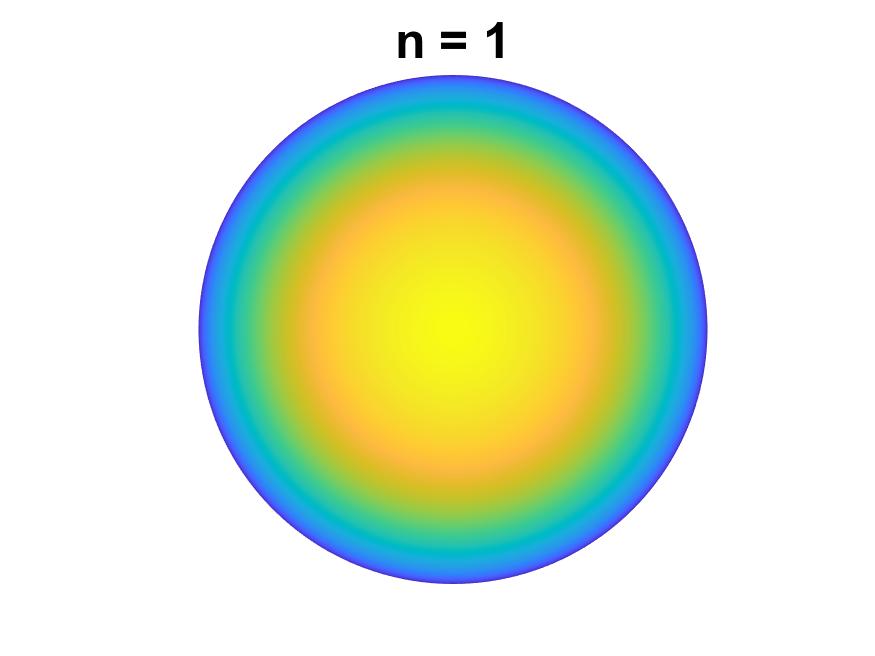}
\includegraphics[ width=.12\linewidth,trim={6cm 2.5cm 5.3cm .5cm},clip]{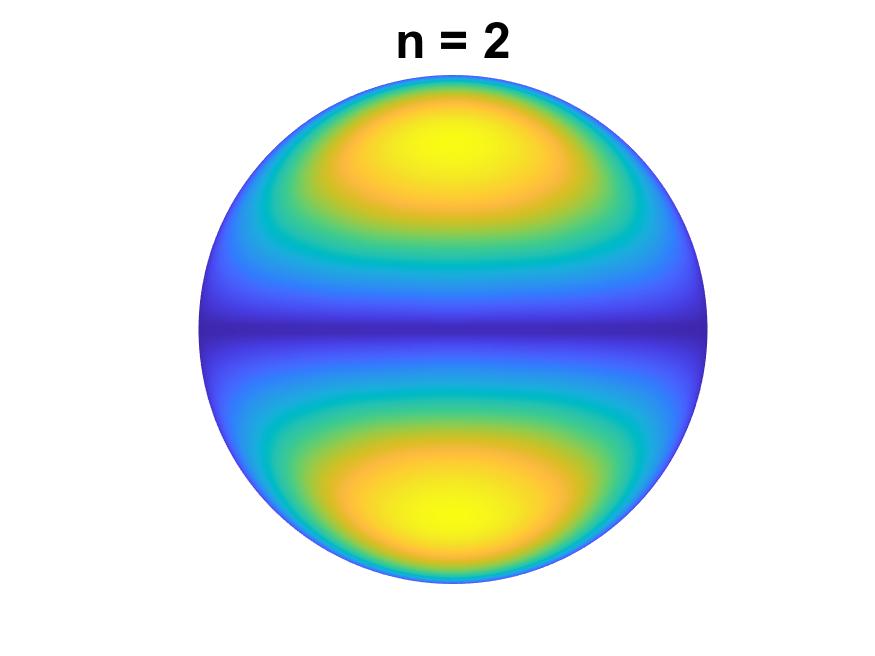}
\includegraphics[ width=.12\linewidth,trim={6cm 2.5cm 5.3cm .5cm},clip]{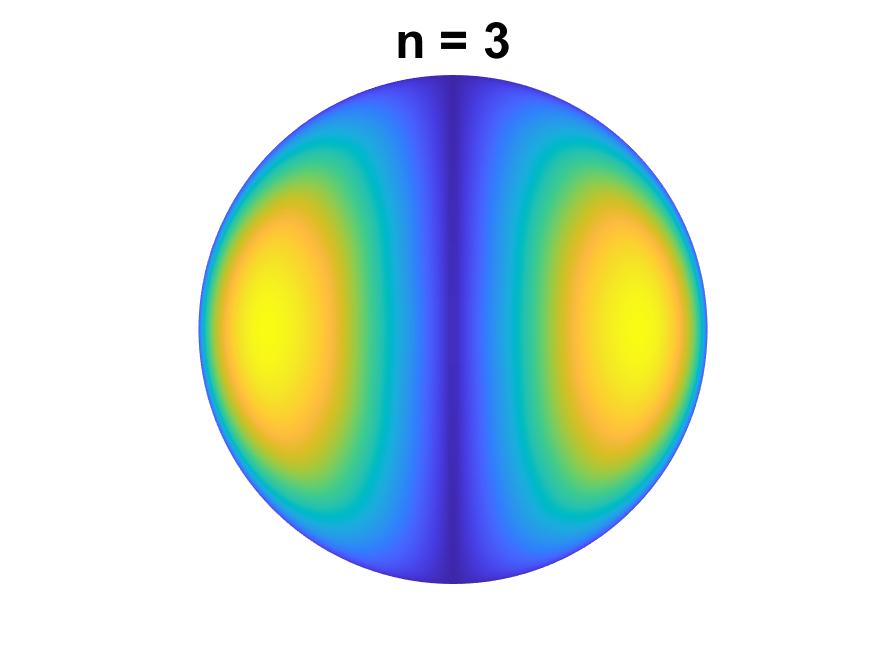}
\includegraphics[ width=.12\linewidth,trim={6cm 2.5cm 5.3cm .5cm},clip]{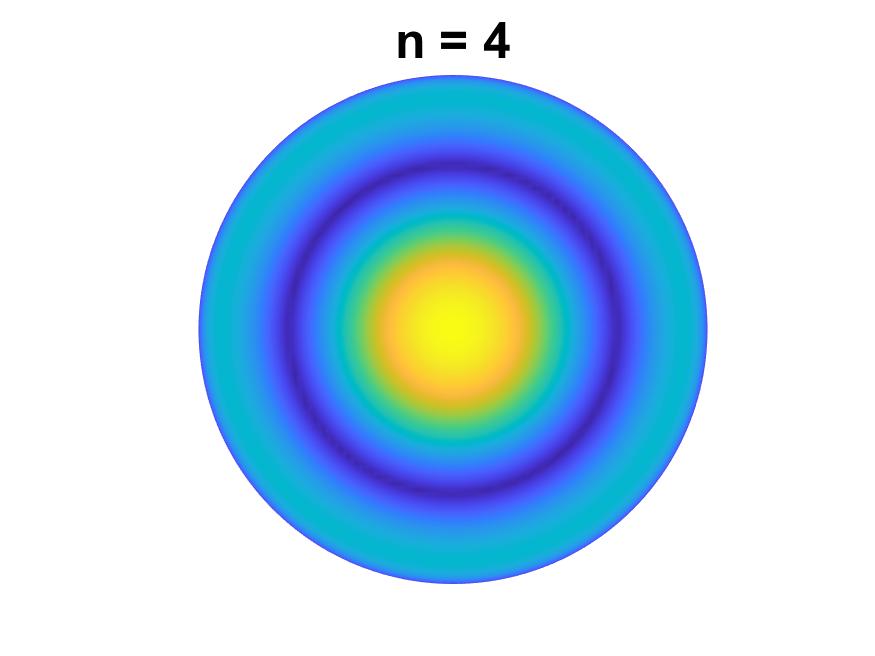}
\includegraphics[ width=.12\linewidth,trim={6cm 2.5cm 5.3cm .5cm},clip]{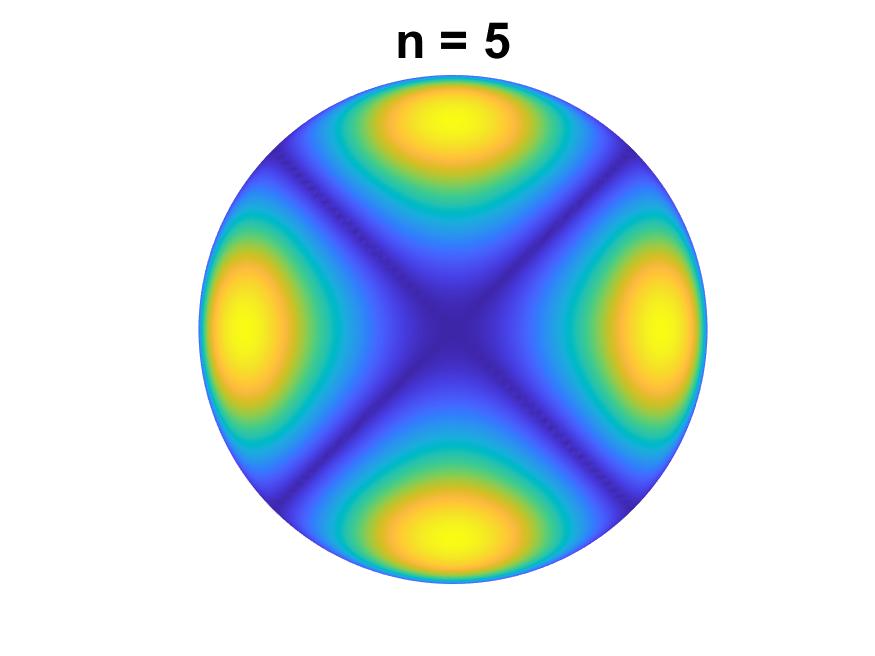}
\includegraphics[ width=.12\linewidth,trim={6cm 2.5cm 5.3cm .5cm},clip]{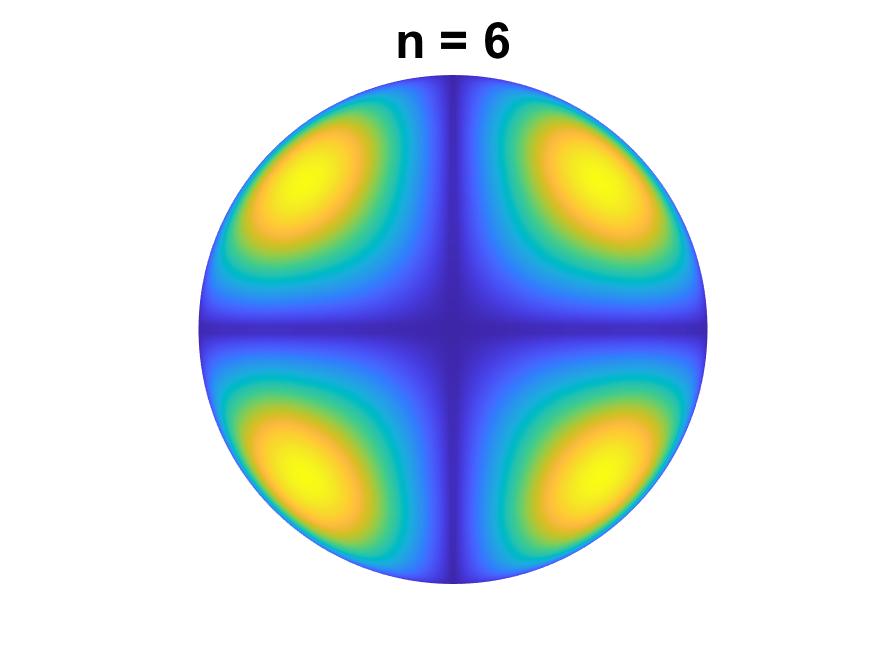} \\
\includegraphics[ width=.12\linewidth,trim={6cm 2.5cm 5.3cm .5cm},clip]{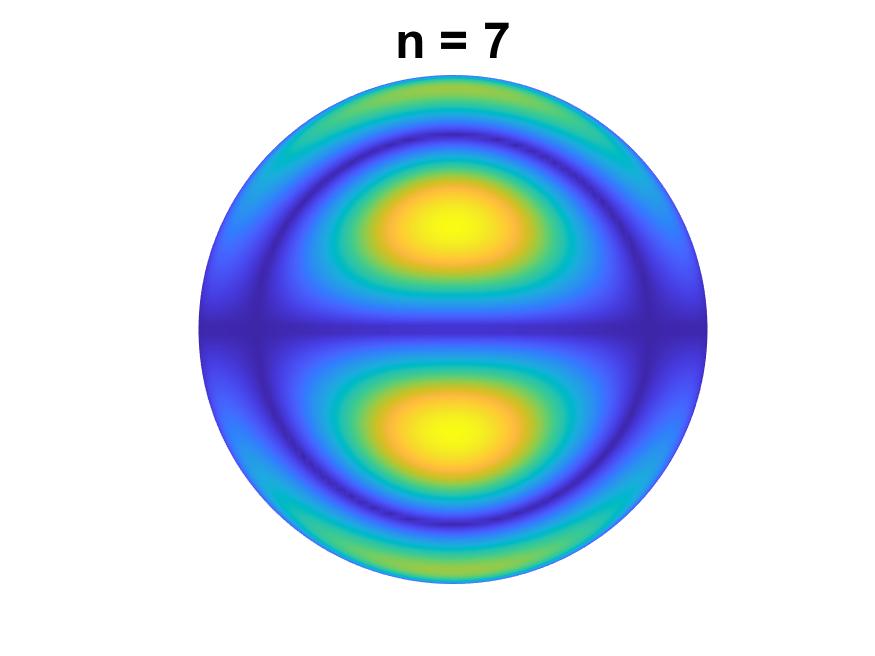}
\includegraphics[ width=.12\linewidth,trim={6cm 2.5cm 5.3cm .5cm},clip]{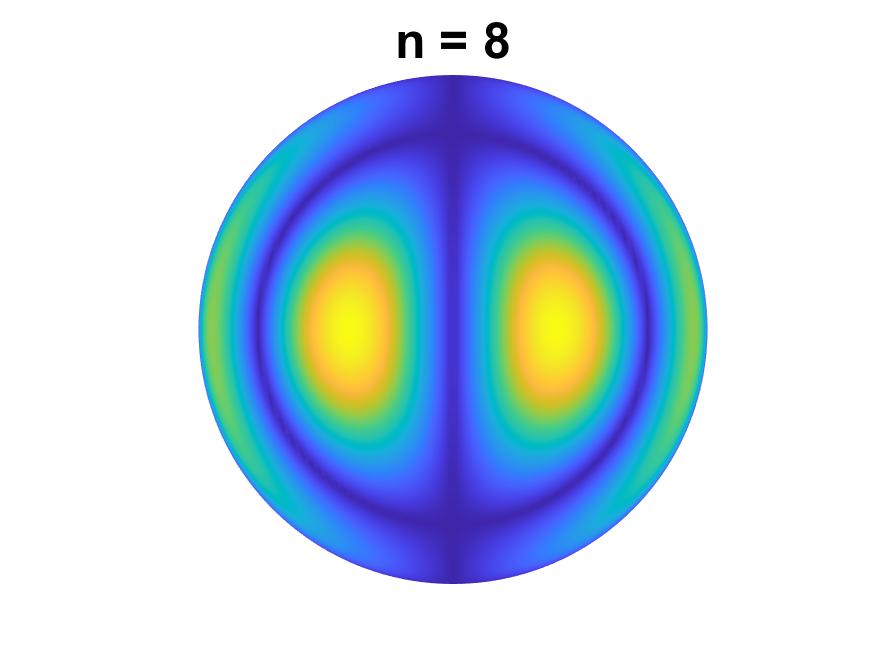}
\includegraphics[ width=.12\linewidth,trim={6cm 2.5cm 5.3cm .5cm},clip]{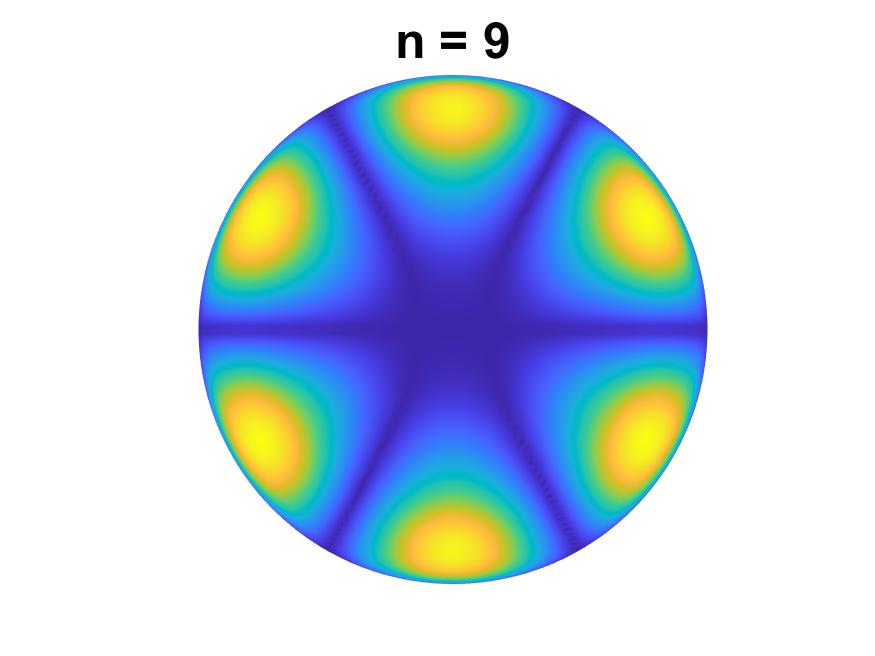}
\includegraphics[ width=.12\linewidth,trim={6cm 2.5cm 5.3cm .5cm},clip]{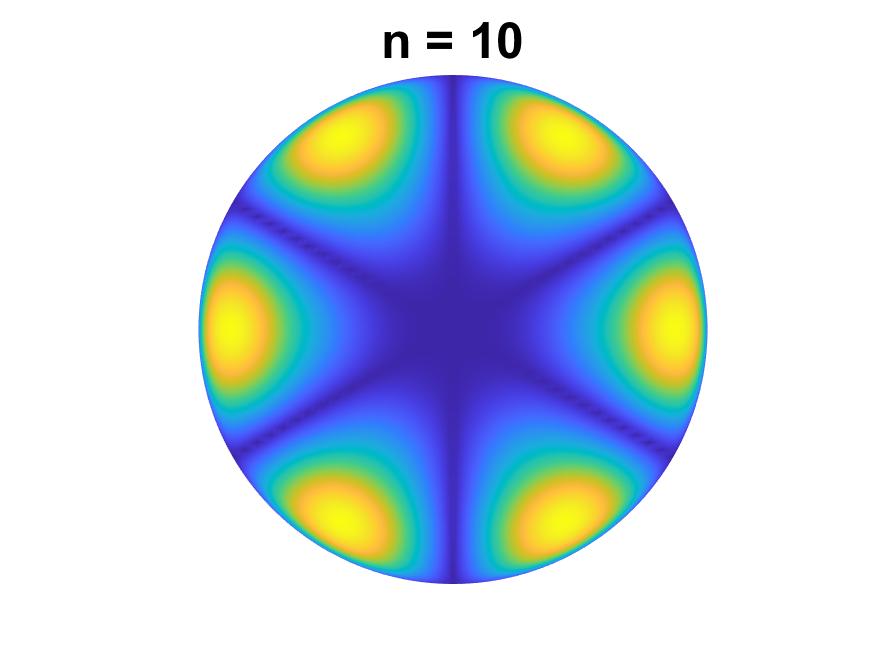}
\includegraphics[ width=.12\linewidth,trim={6cm 2.5cm 5.3cm .5cm},clip]{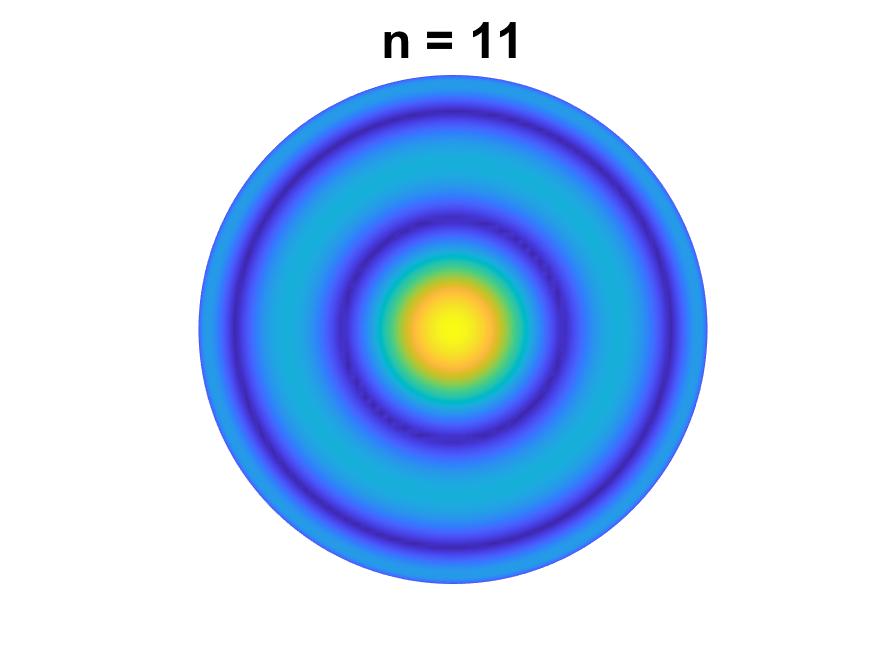}
\includegraphics[ width=.12\linewidth,trim={6cm 2.5cm 5.3cm .5cm},clip]{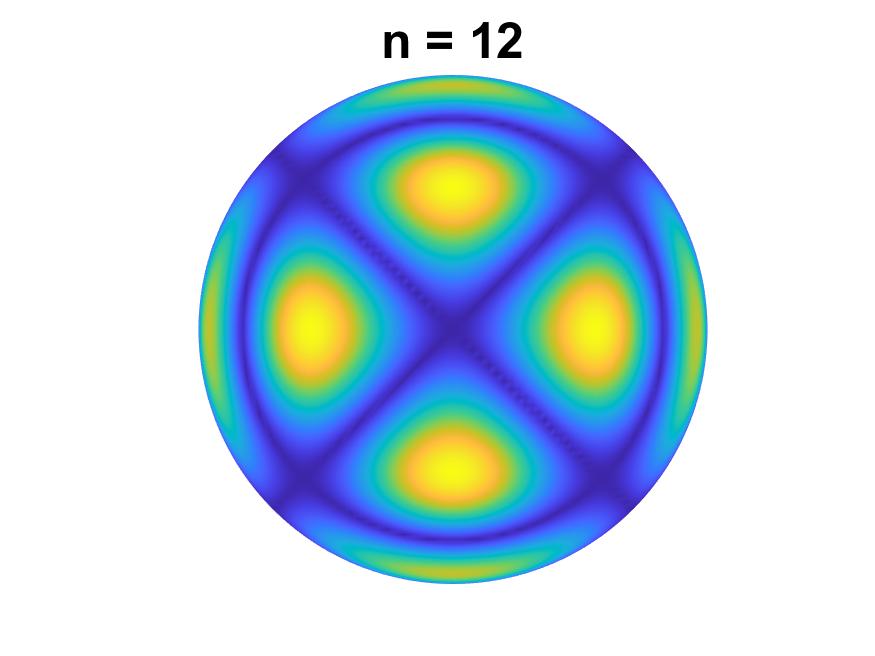} \\
\includegraphics[ width=.12\linewidth,trim={6cm 2.5cm 5.3cm .5cm},clip]{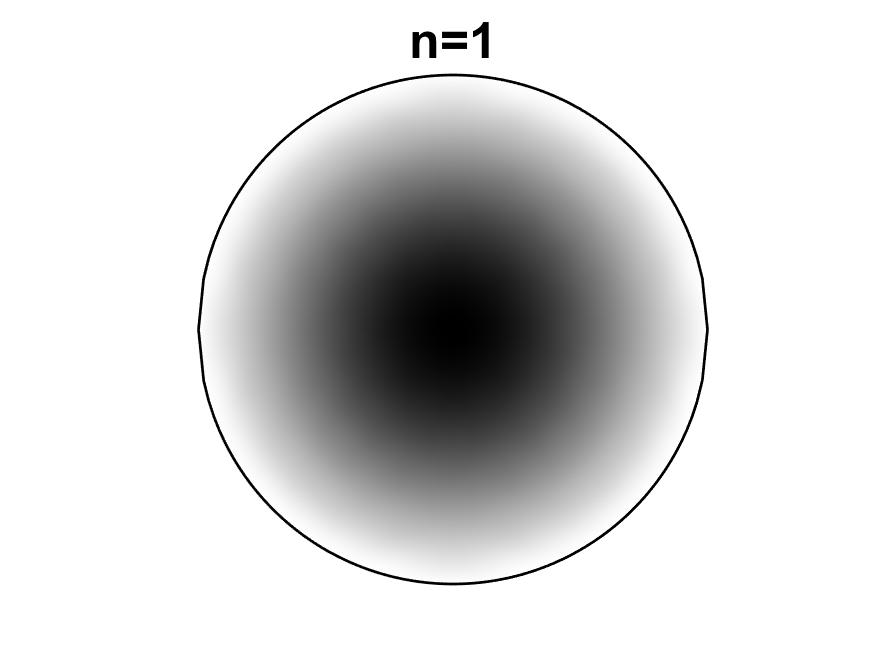}
\includegraphics[ width=.12\linewidth,trim={6cm 2.5cm 5.3cm .5cm},clip]{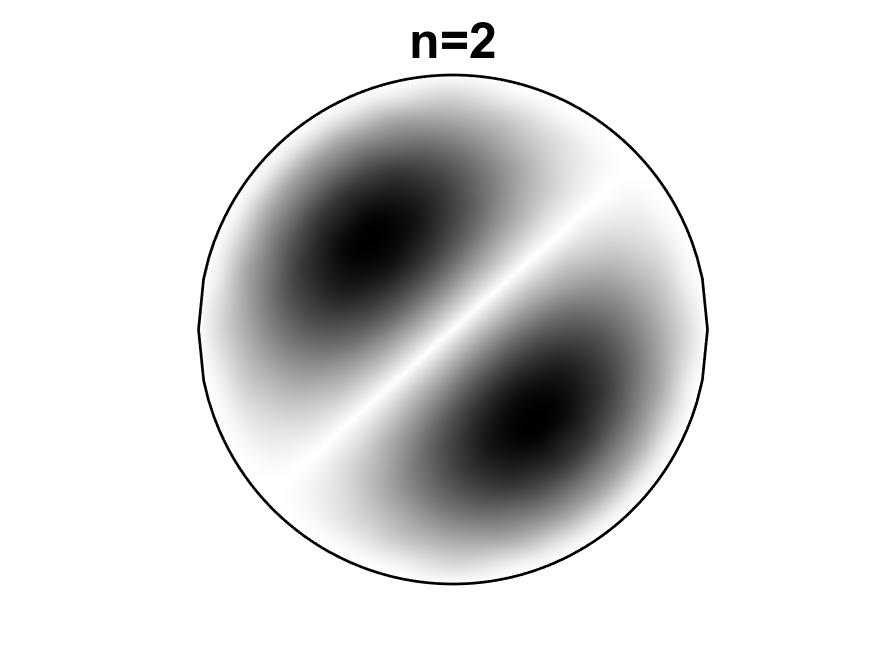}
\includegraphics[ width=.12\linewidth,trim={6cm 2.5cm 5.3cm .5cm},clip]{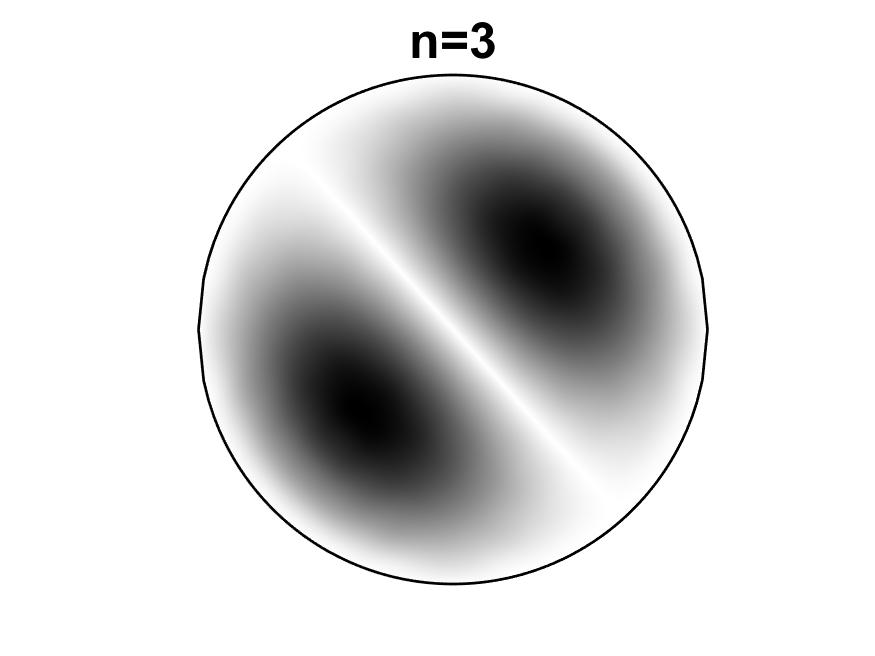}
\includegraphics[ width=.12\linewidth,trim={6cm 2.5cm 5.3cm .5cm},clip]{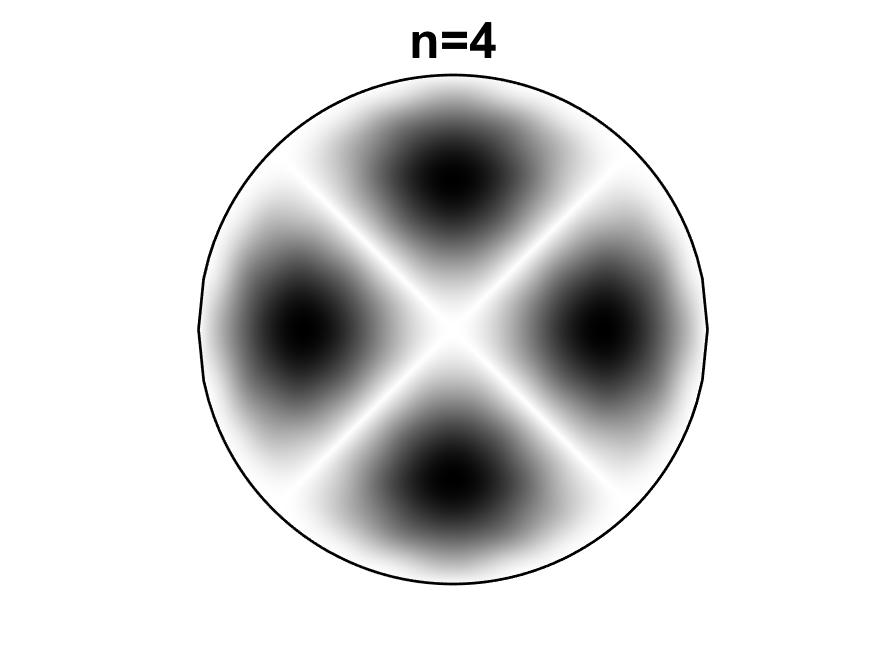}
\includegraphics[ width=.12\linewidth,trim={6cm 2.5cm 5.3cm .5cm},clip]{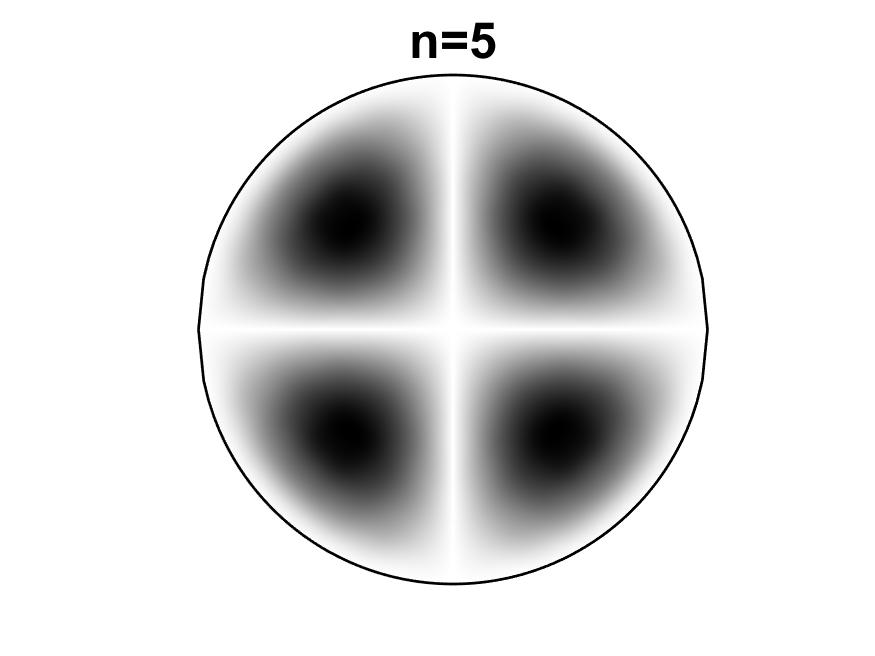}
\includegraphics[ width=.12\linewidth,trim={6cm 2.5cm 5.3cm .5cm},clip]{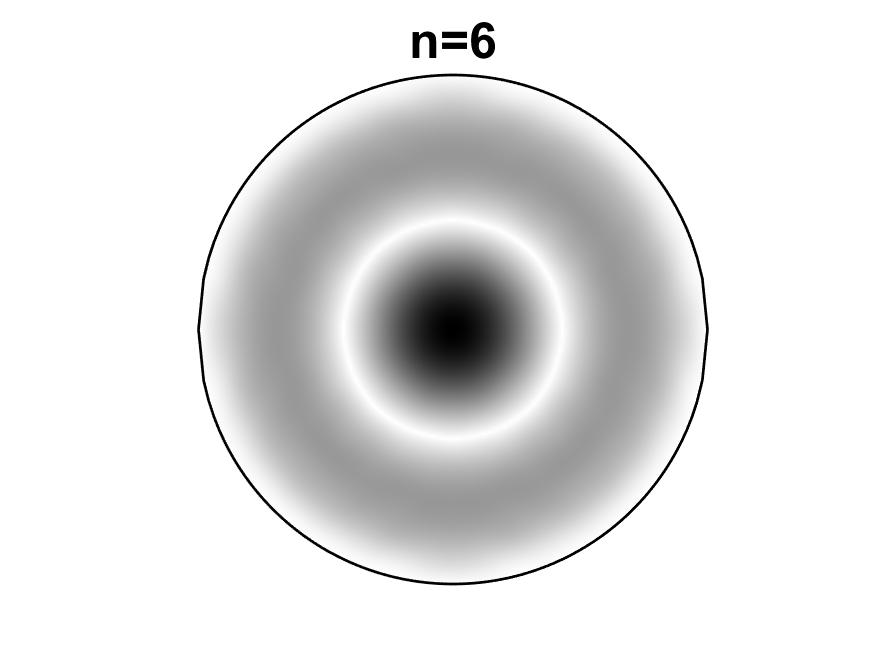} \\
\includegraphics[ width=.12\linewidth,trim={6cm 2.5cm 5.3cm .5cm},clip]{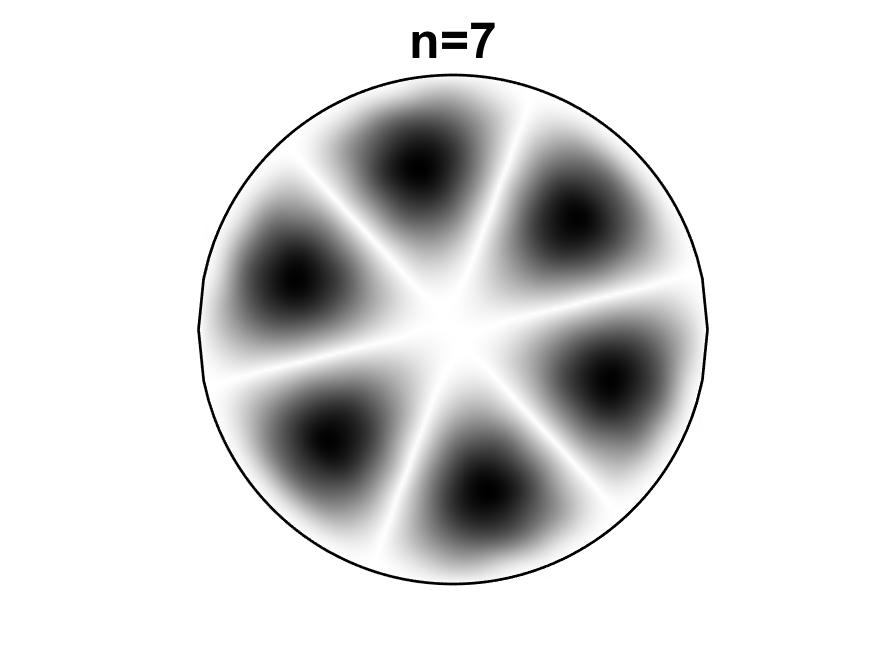}
\includegraphics[ width=.12\linewidth,trim={6cm 2.5cm 5.3cm .5cm},clip]{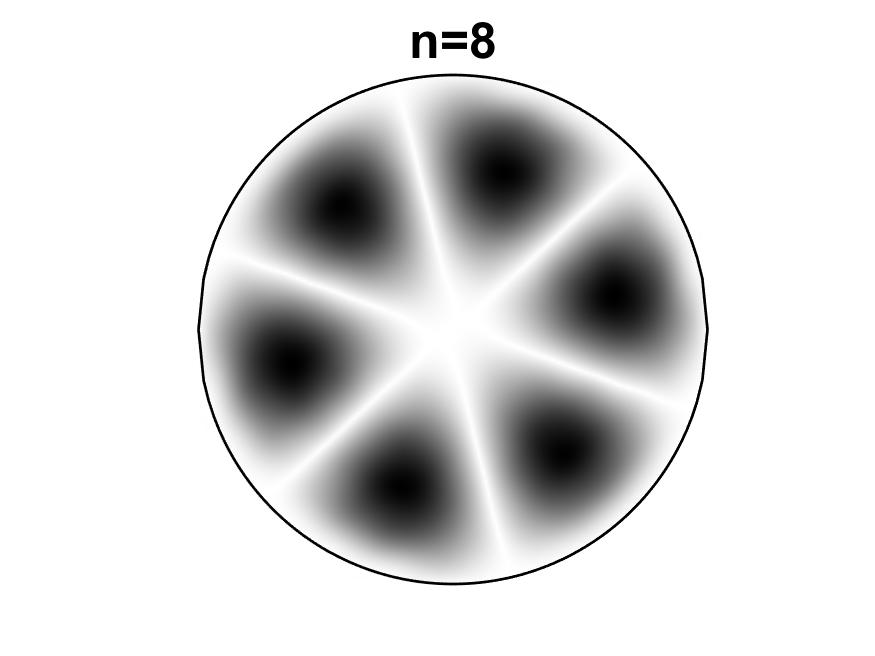}
\includegraphics[ width=.12\linewidth,trim={6cm 2.5cm 5.3cm .5cm},clip]{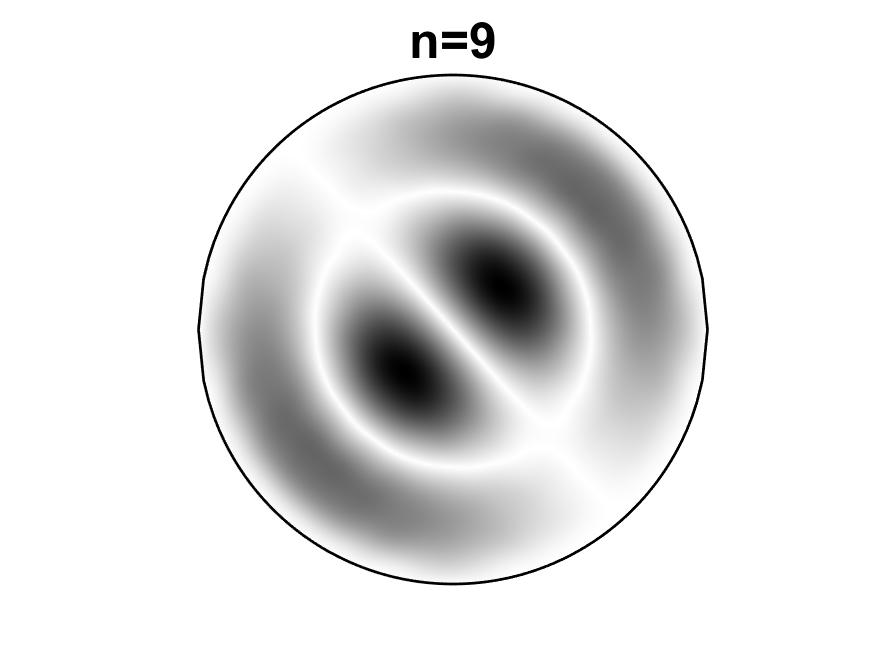}
\includegraphics[ width=.12\linewidth,trim={6cm 2.5cm 5.3cm .5cm},clip]{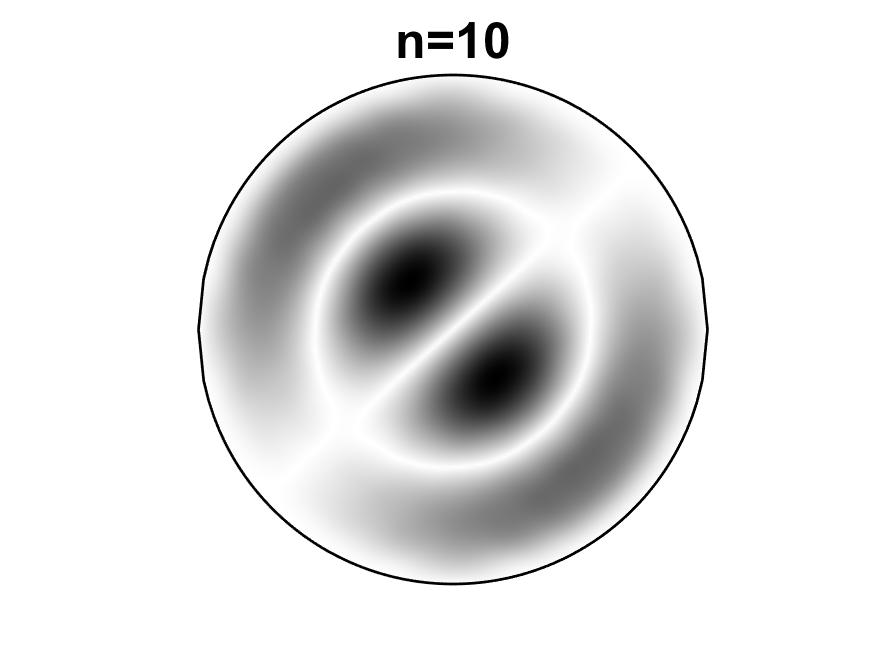}
\includegraphics[ width=.12\linewidth,trim={6cm 2.5cm 5.3cm .5cm},clip]{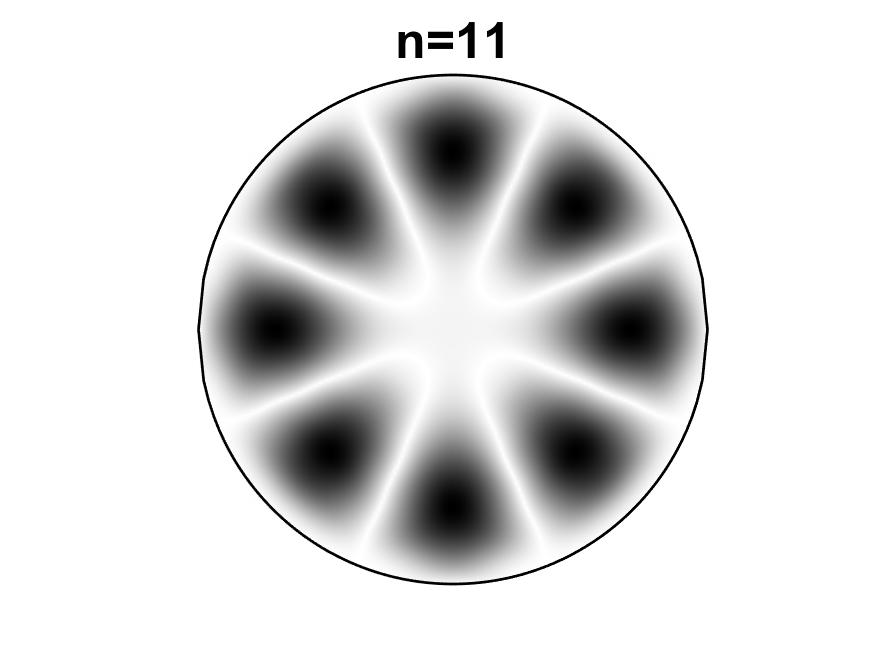}
\includegraphics[ width=.12\linewidth,trim={6cm 2.5cm 5.3cm .5cm},clip]{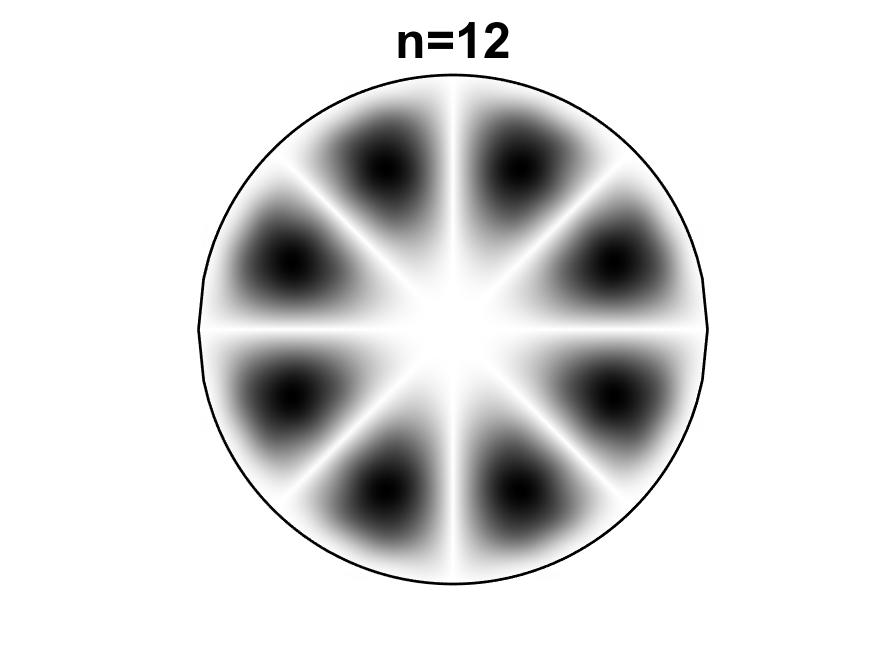}
\caption{The first 12 modes of a half-sphere (above) compared to those of a planar disk (below). Notice the ordering of the degenerate modes does not match.}
\label{fig:sphcircles}
\end{figure}
~~~

\begin{figure}[h!]\centering
\includegraphics[ width=.12\linewidth,trim={6cm 2.5cm 5.3cm .5cm},clip]{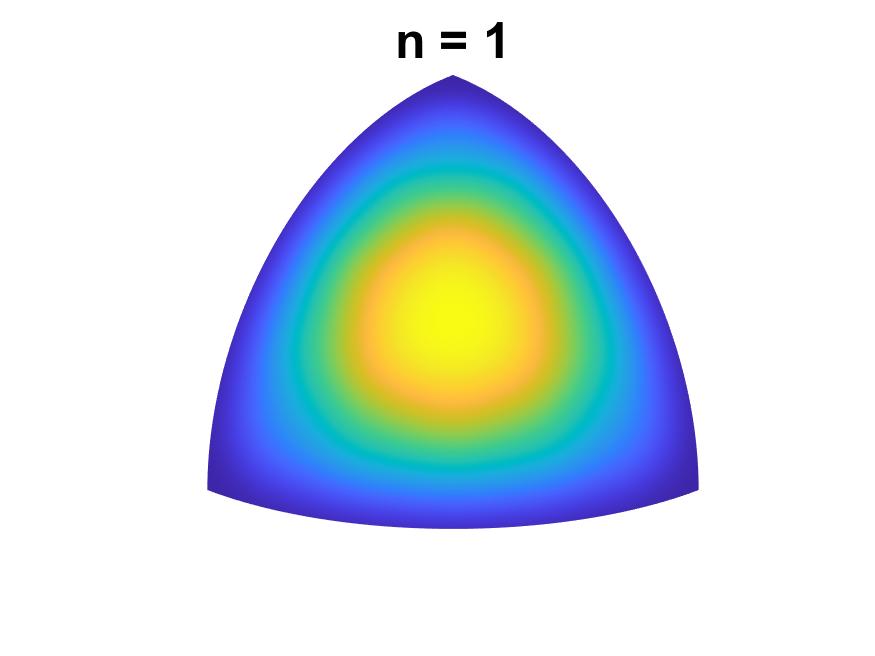}
\includegraphics[ width=.12\linewidth,trim={6cm 2.5cm 5.3cm .5cm},clip]{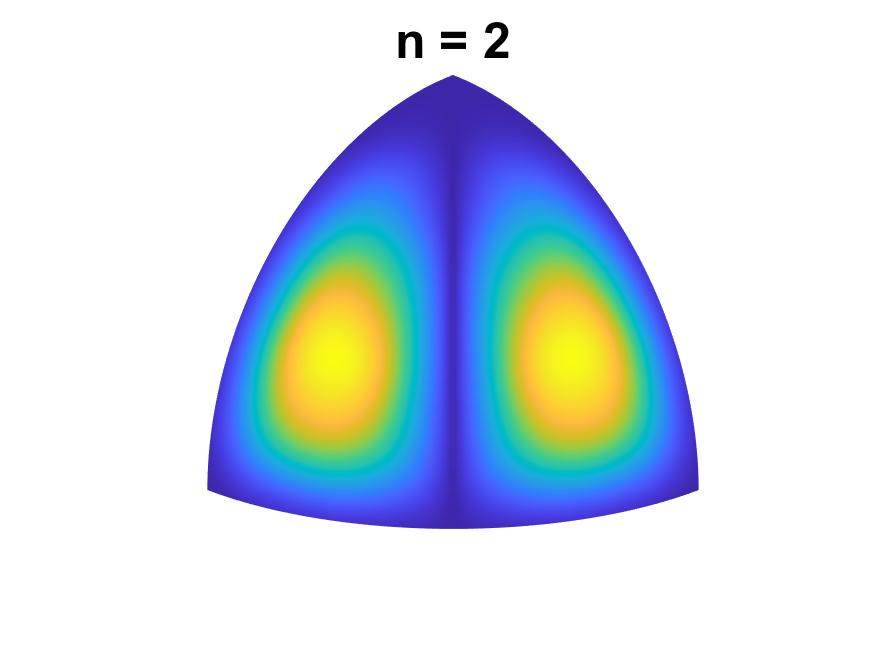}
\includegraphics[ width=.12\linewidth,trim={6cm 2.5cm 5.3cm .5cm},clip]{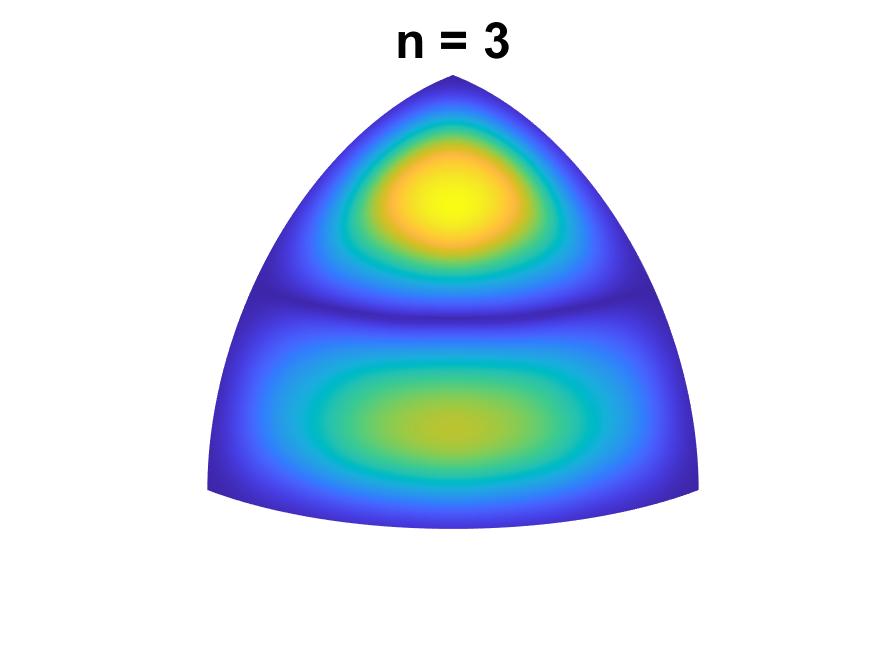}
\includegraphics[ width=.12\linewidth,trim={6cm 2.5cm 5.3cm .5cm},clip]{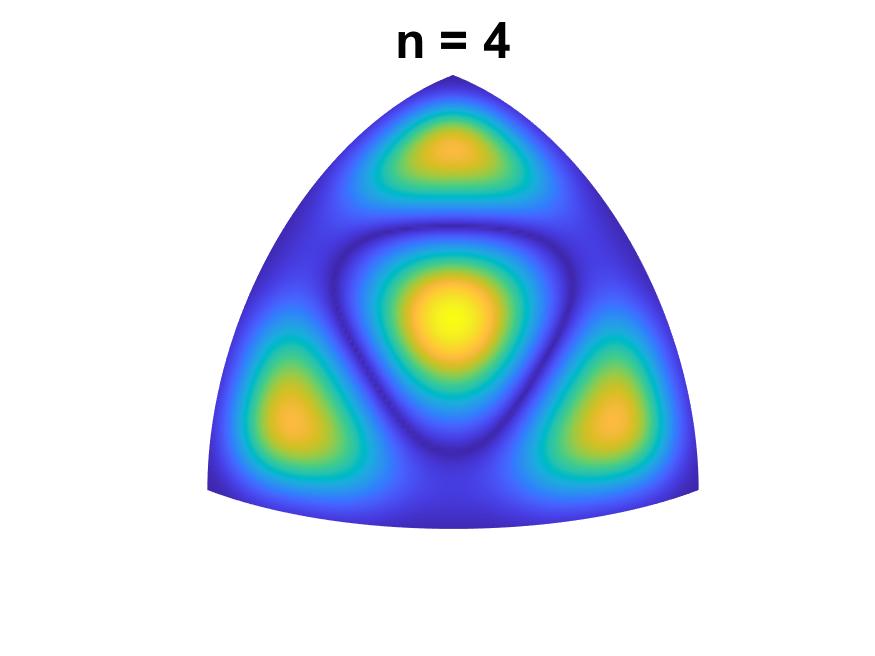}
\includegraphics[ width=.12\linewidth,trim={6cm 2.5cm 5.3cm .5cm},clip]{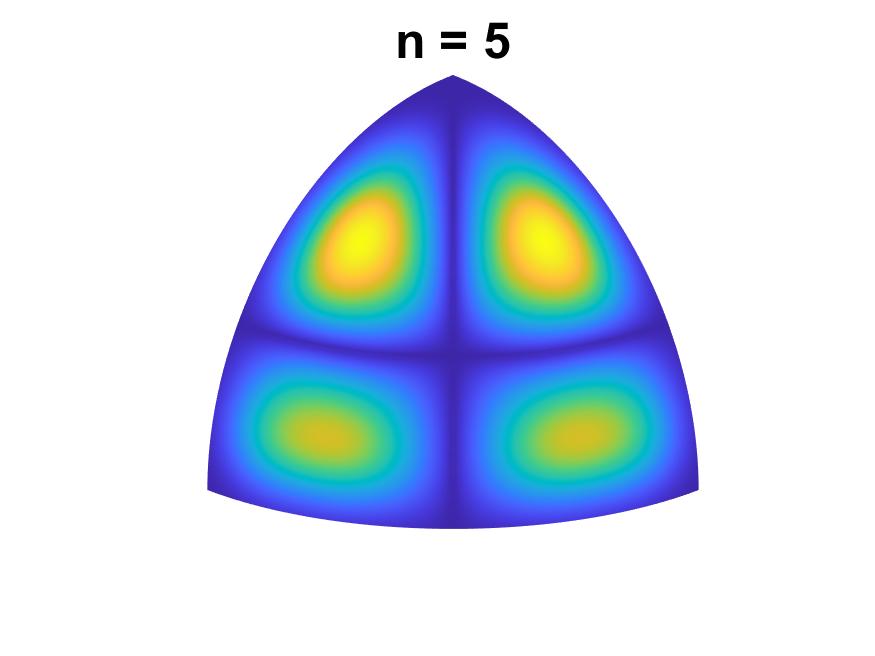}
\includegraphics[ width=.12\linewidth,trim={6cm 2.5cm 5.3cm .5cm},clip]{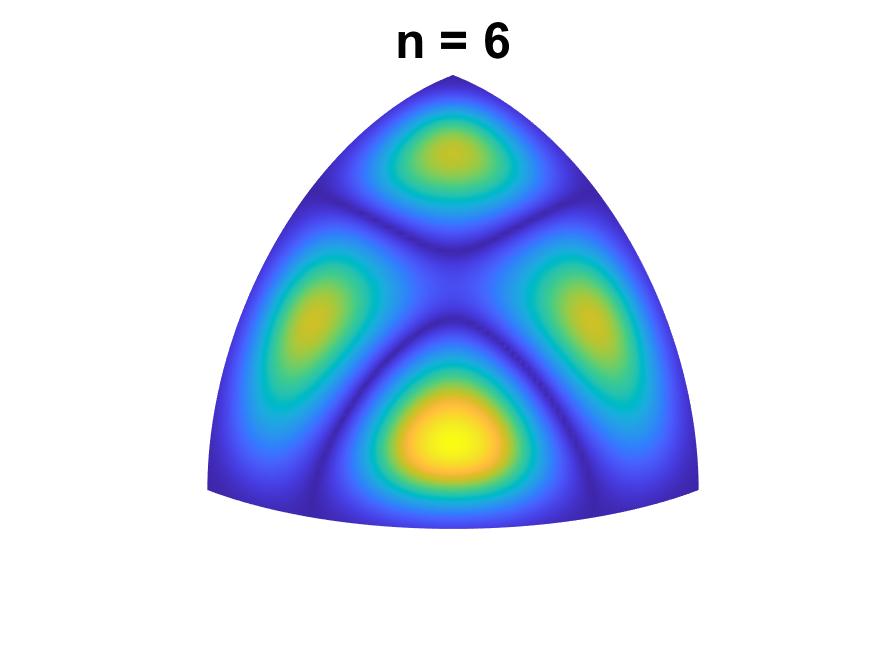} \\
\includegraphics[ width=.12\linewidth,trim={6cm 2.5cm 5.3cm .5cm},clip]{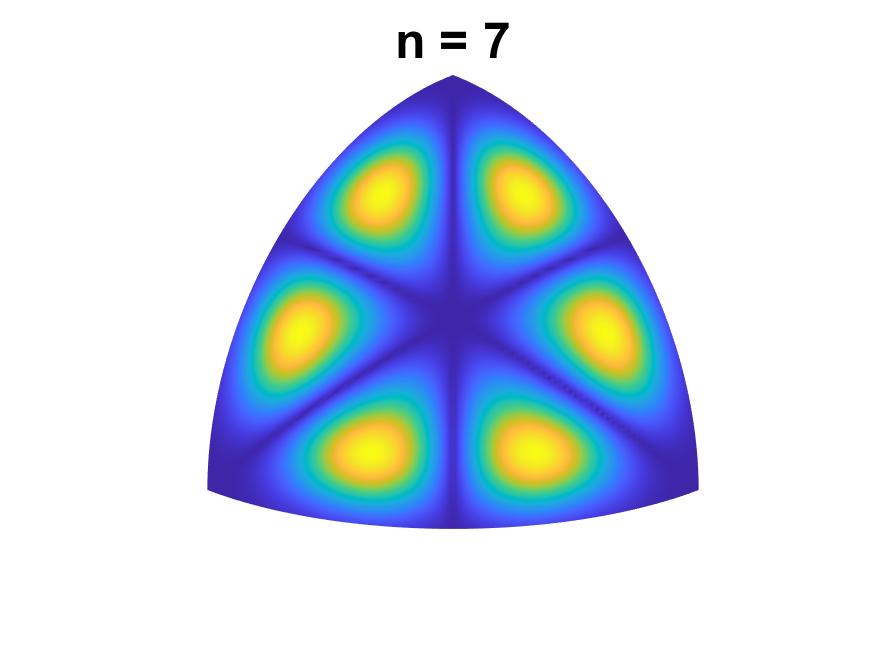}
\includegraphics[ width=.12\linewidth,trim={6cm 2.5cm 5.3cm .5cm},clip]{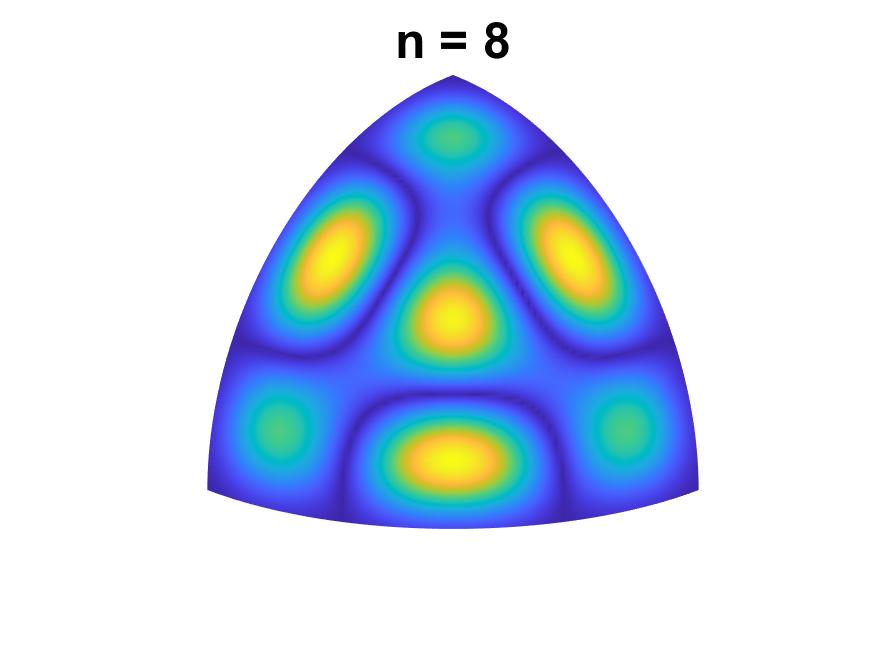}
\includegraphics[ width=.12\linewidth,trim={6cm 2.5cm 5.3cm .5cm},clip]{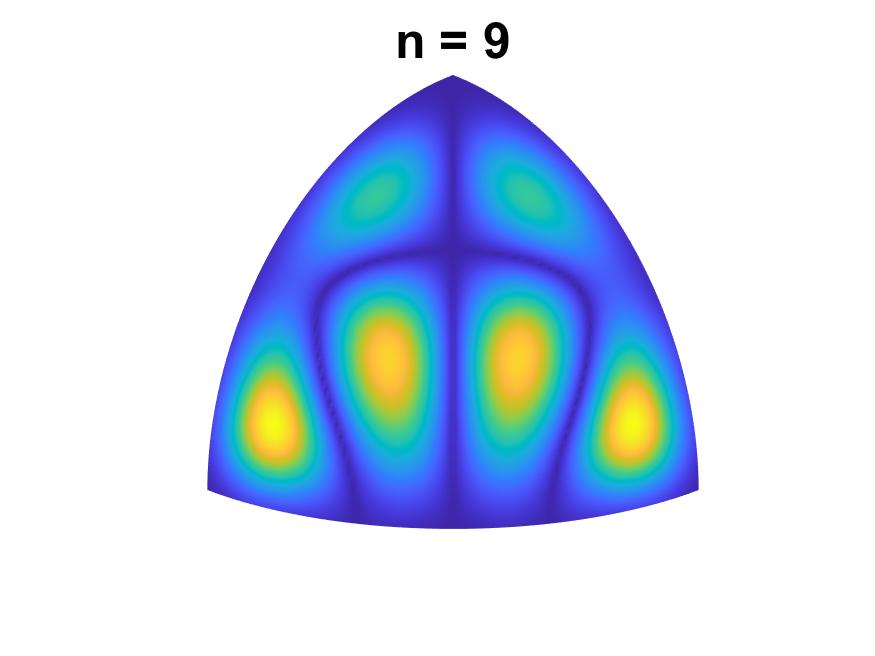}
\includegraphics[ width=.12\linewidth,trim={6cm 2.5cm 5.3cm .5cm},clip]{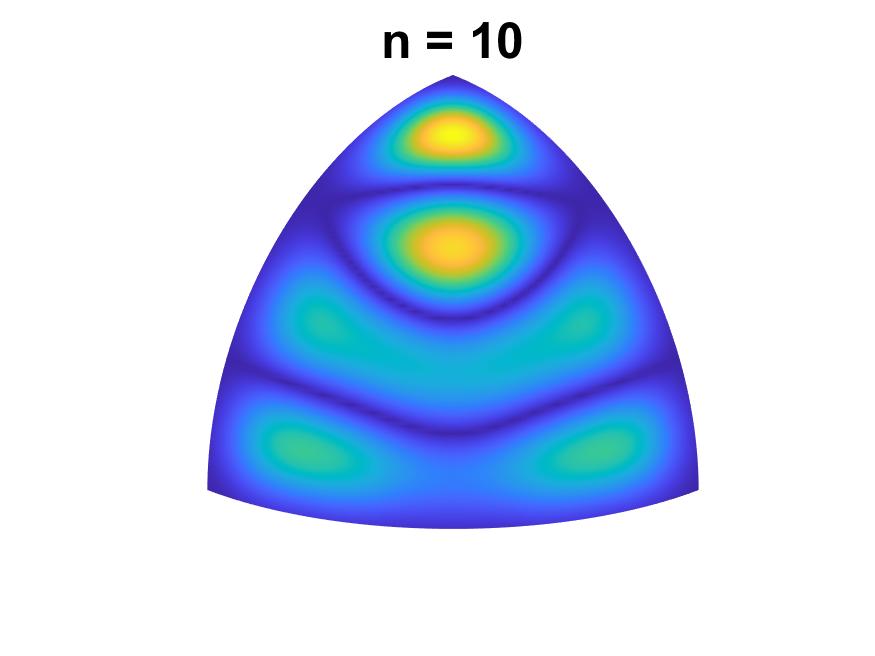}
\includegraphics[ width=.12\linewidth,trim={6cm 2.5cm 5.3cm .5cm},clip]{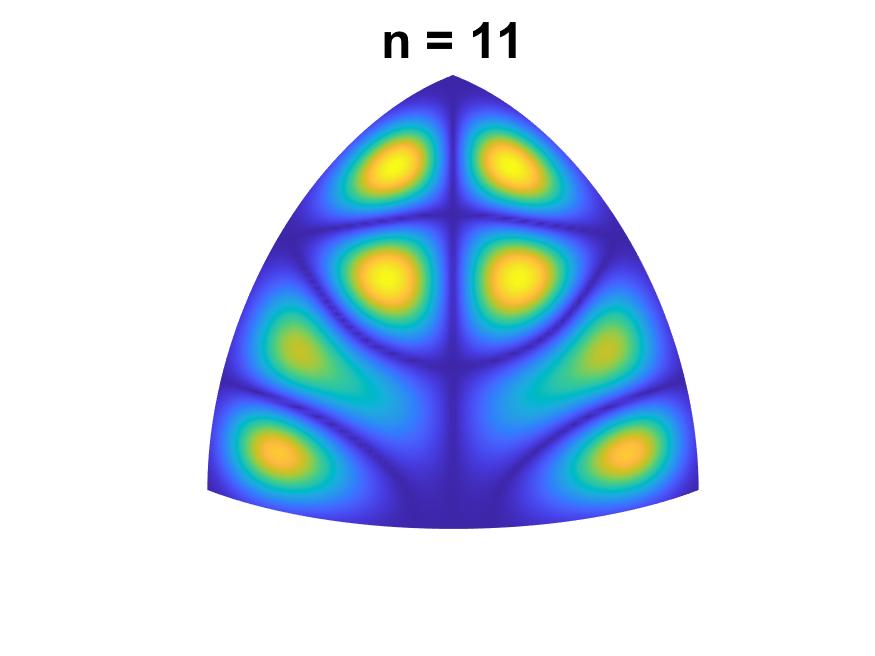}
\includegraphics[ width=.12\linewidth,trim={6cm 2.5cm 5.3cm .5cm},clip]{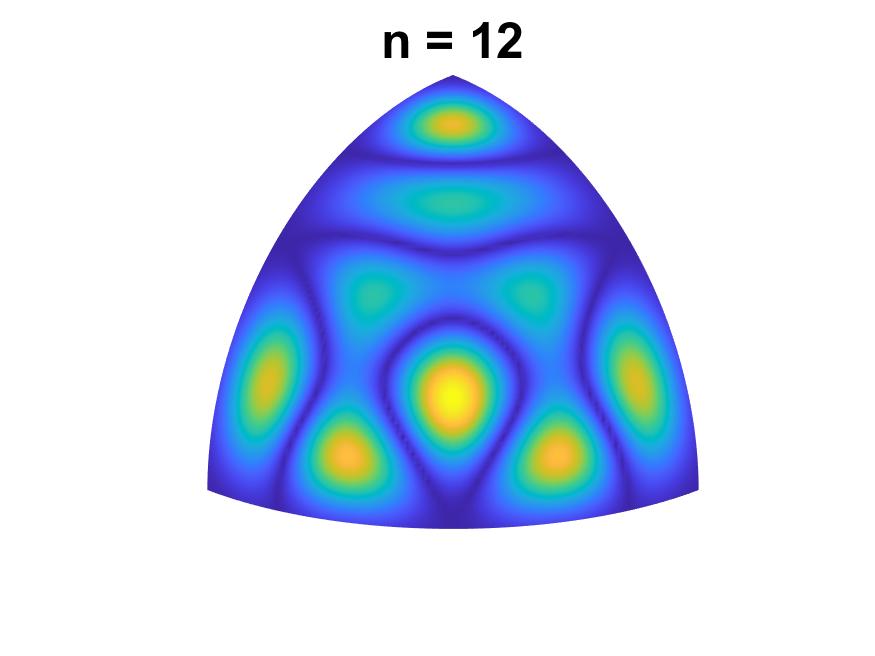} \\
\includegraphics[ width=.12\linewidth,trim={4cm 0cm 3.2cm 0cm},clip]{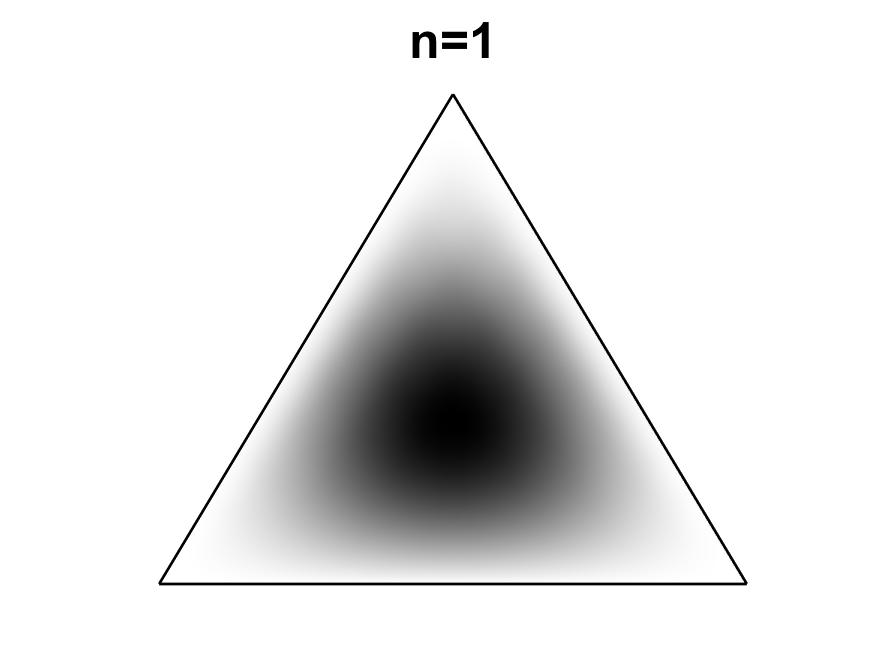}
\includegraphics[ width=.12\linewidth,trim={4cm 0cm 3.2cm 0cm},clip]{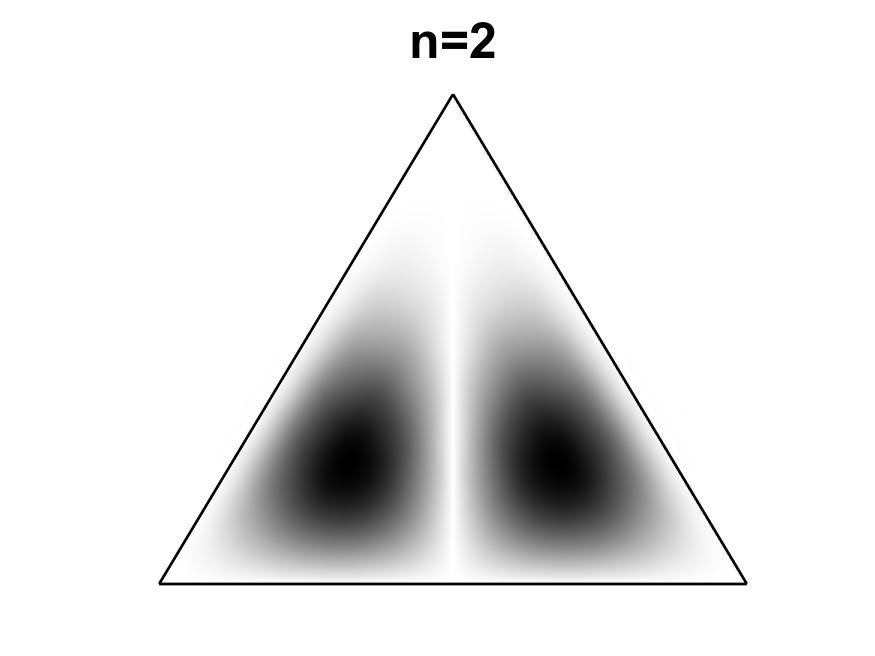}
\includegraphics[ width=.12\linewidth,trim={4cm 0cm 3.2cm 0cm},clip]{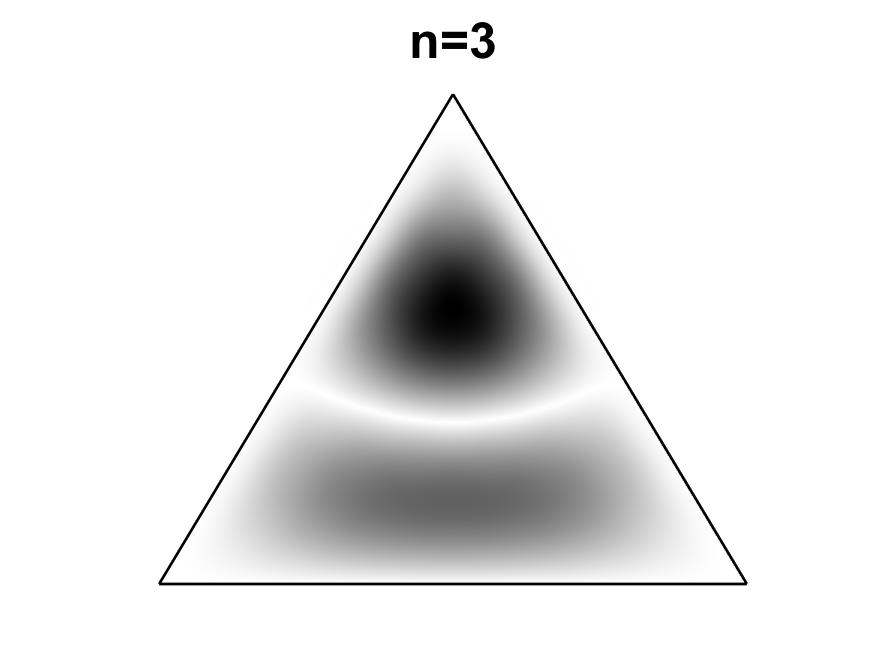}
\includegraphics[ width=.12\linewidth,trim={4cm 0cm 3.2cm 0cm},clip]{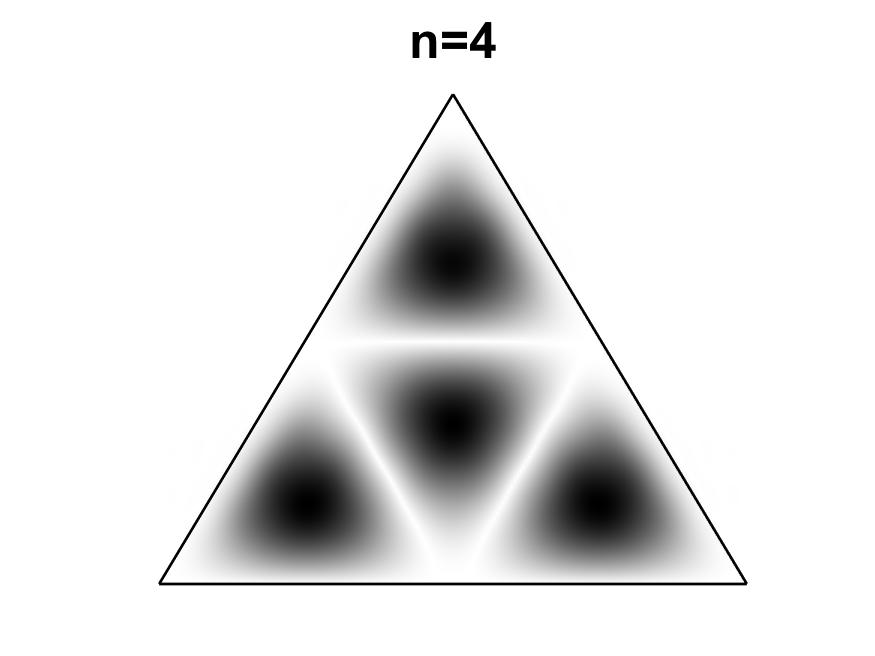}
\includegraphics[ width=.12\linewidth,trim={4cm 0cm 3.2cm 0cm},clip]{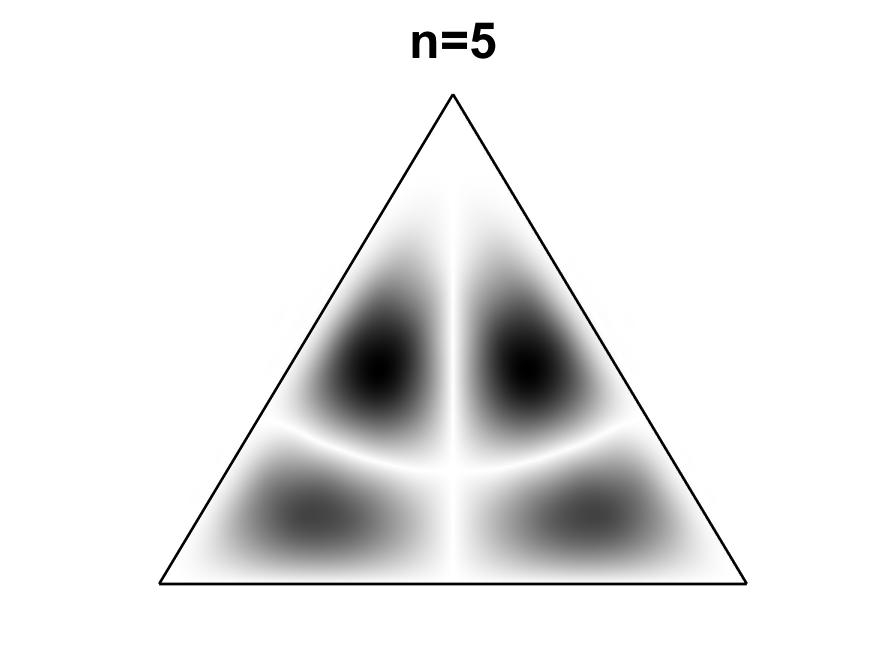}
\includegraphics[ width=.12\linewidth,trim={4cm 0cm 3.2cm 0cm},clip]{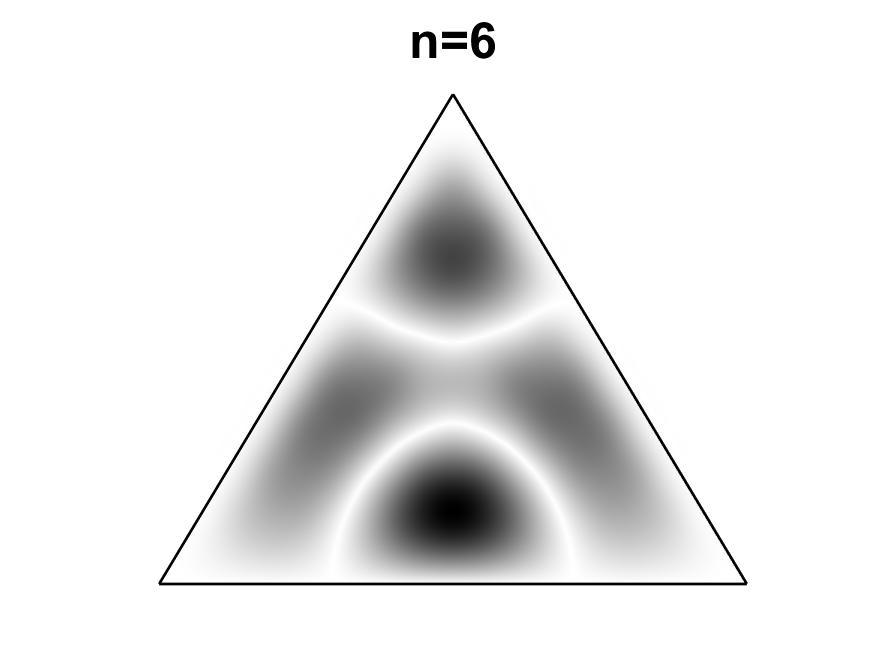} \\
\includegraphics[ width=.12\linewidth,trim={4cm 0cm 3.2cm 0cm},clip]{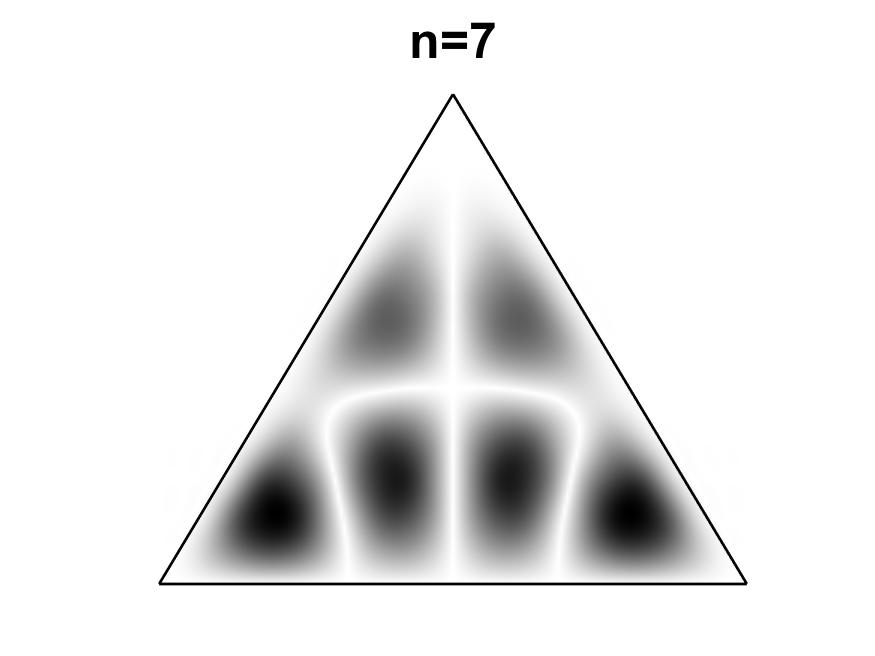}
\includegraphics[ width=.12\linewidth,trim={4cm 0cm 3.2cm 0cm},clip]{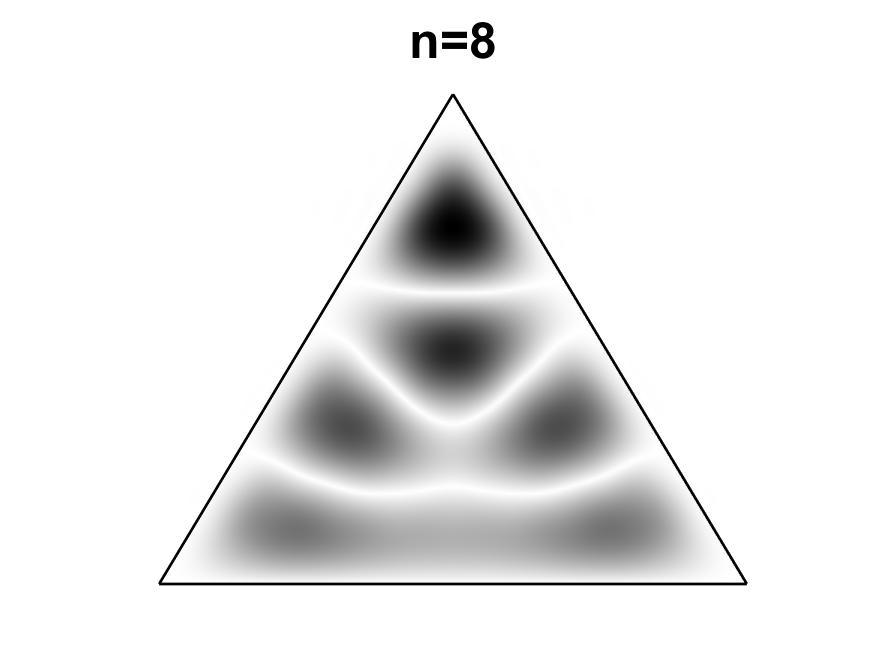}
\includegraphics[ width=.12\linewidth,trim={4cm 0cm 3.2cm 0cm},clip]{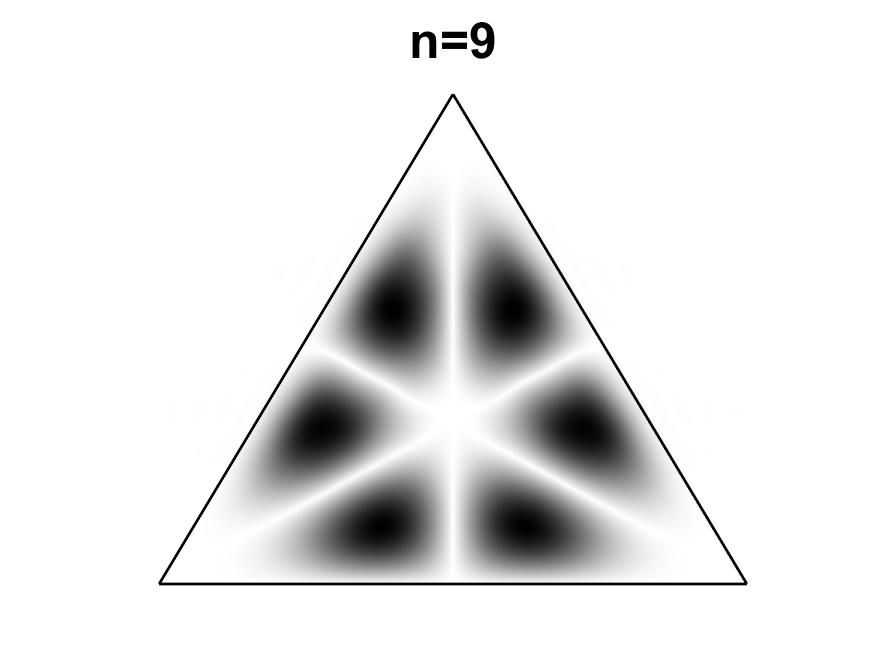}
\includegraphics[ width=.12\linewidth,trim={4cm 0cm 3.2cm 0cm},clip]{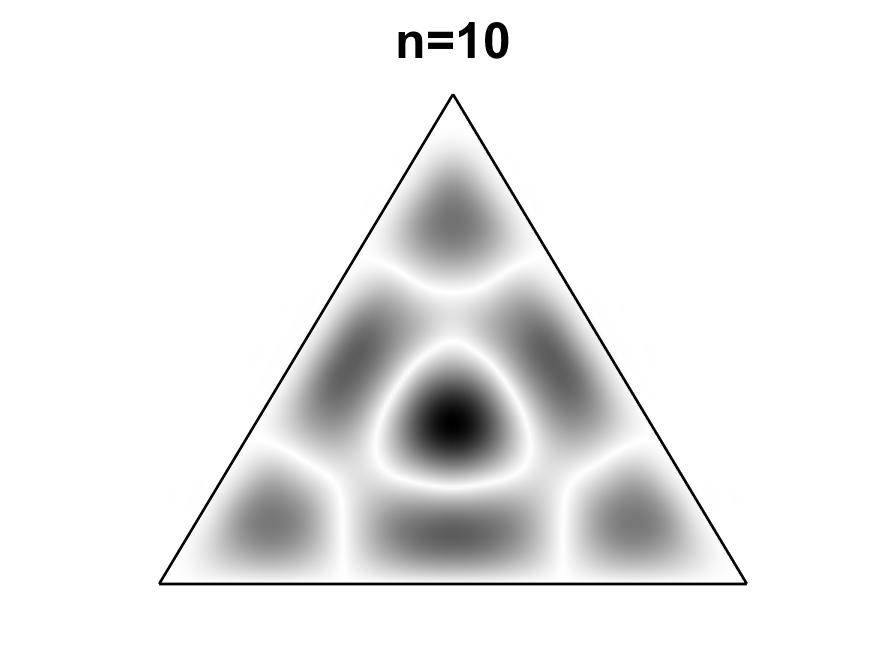}
\includegraphics[ width=.12\linewidth,trim={4cm 0cm 3.2cm 0cm},clip]{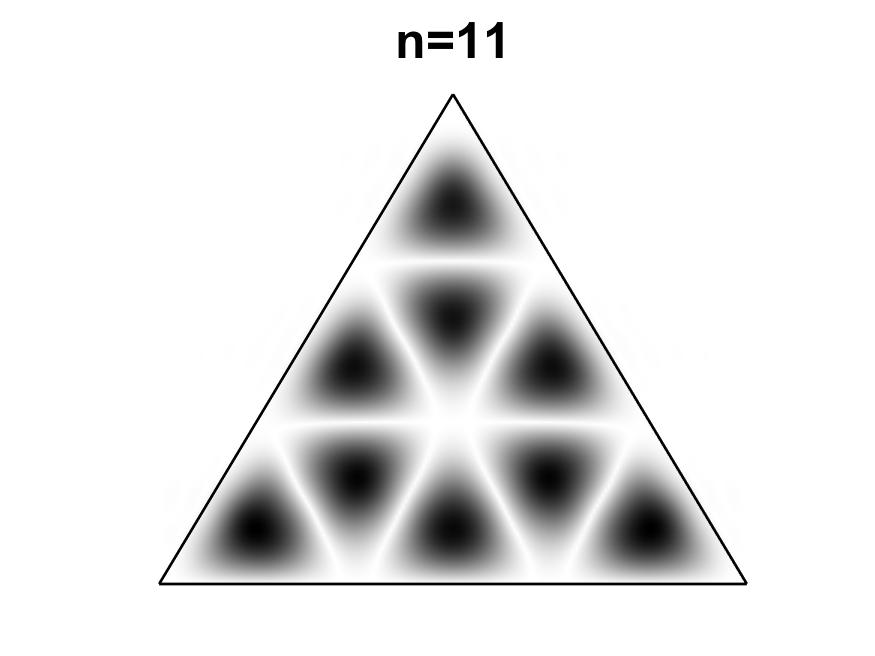}
\includegraphics[ width=.12\linewidth,trim={4cm 0cm 3.2cm 0cm},clip]{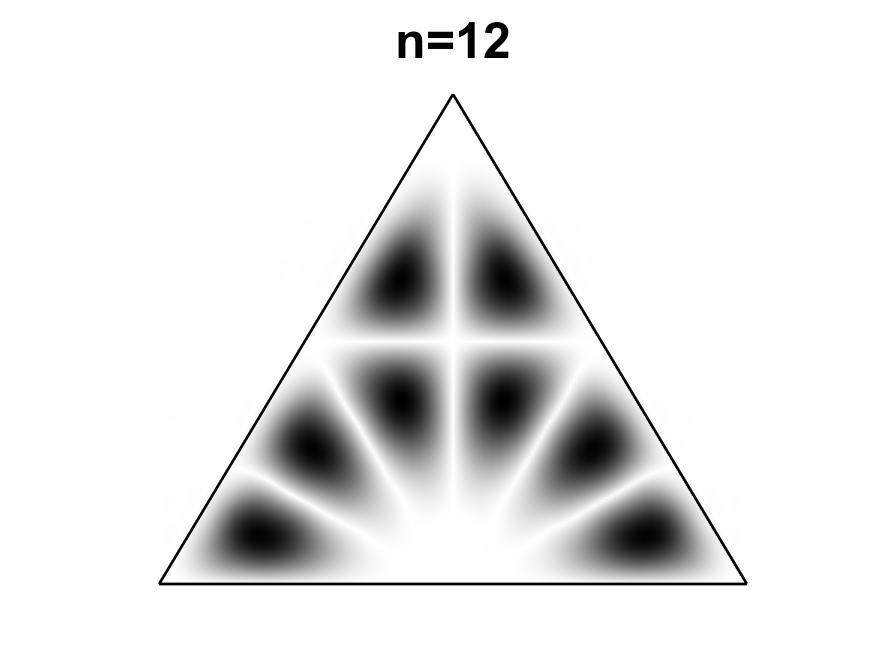}
\caption{The first 12 modes of the spherical octant (above) compared with those of an equilateral triangle (below). Notice the ordering of the degenerate modes does not match.} \label{fig:sphtri}
\end{figure}

\begin{figure}[h!]\centering
\includegraphics[ width=.12\linewidth,trim={6cm 2.5cm 5.3cm .5cm},clip]{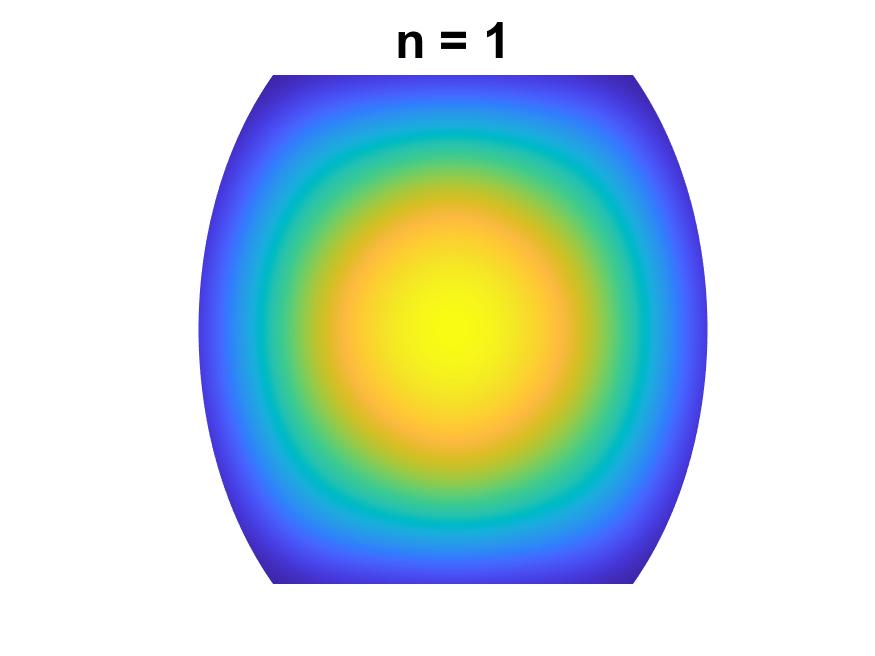}
\includegraphics[ width=.12\linewidth,trim={6cm 2.5cm 5.3cm .5cm},clip]{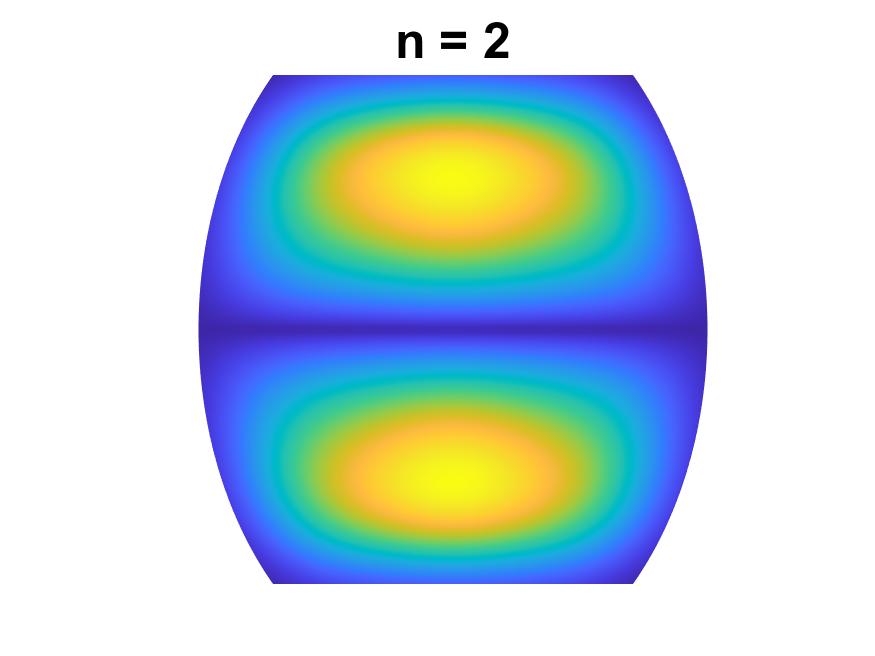}
\includegraphics[ width=.12\linewidth,trim={6cm 2.5cm 5.3cm .5cm},clip]{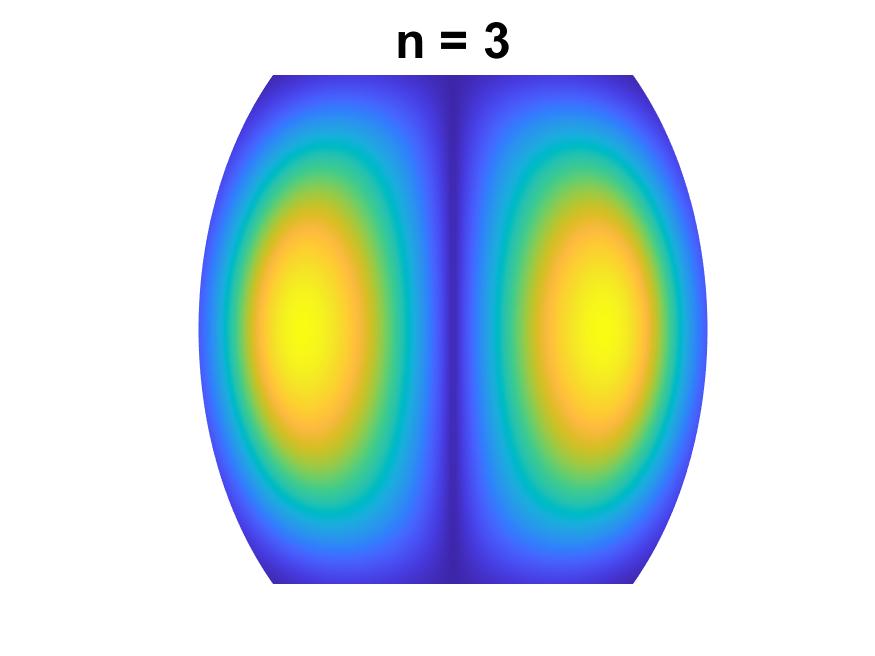}
\includegraphics[ width=.12\linewidth,trim={6cm 2.5cm 5.3cm .5cm},clip]{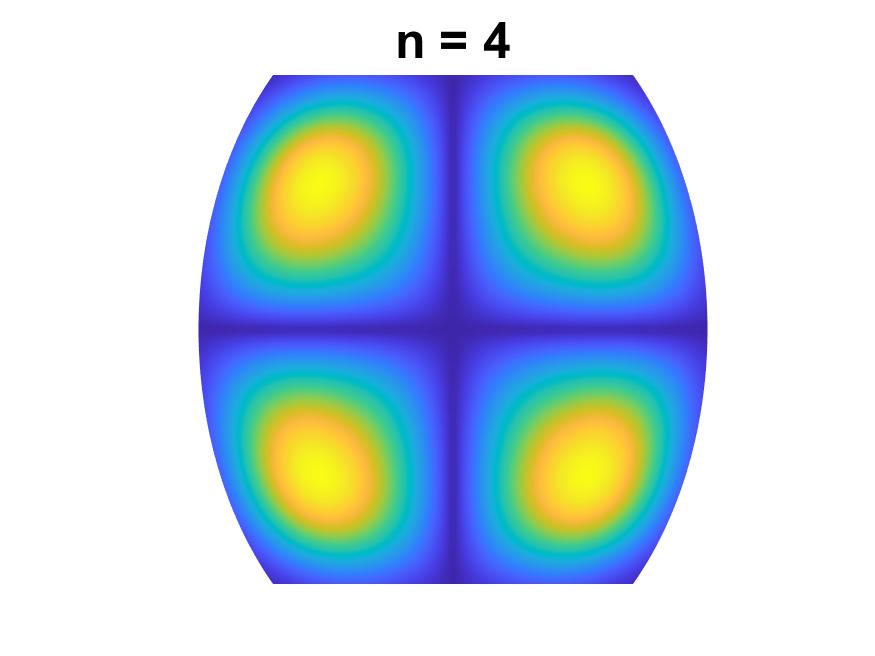}
\includegraphics[ width=.12\linewidth,trim={6cm 2.5cm 5.3cm .5cm},clip]{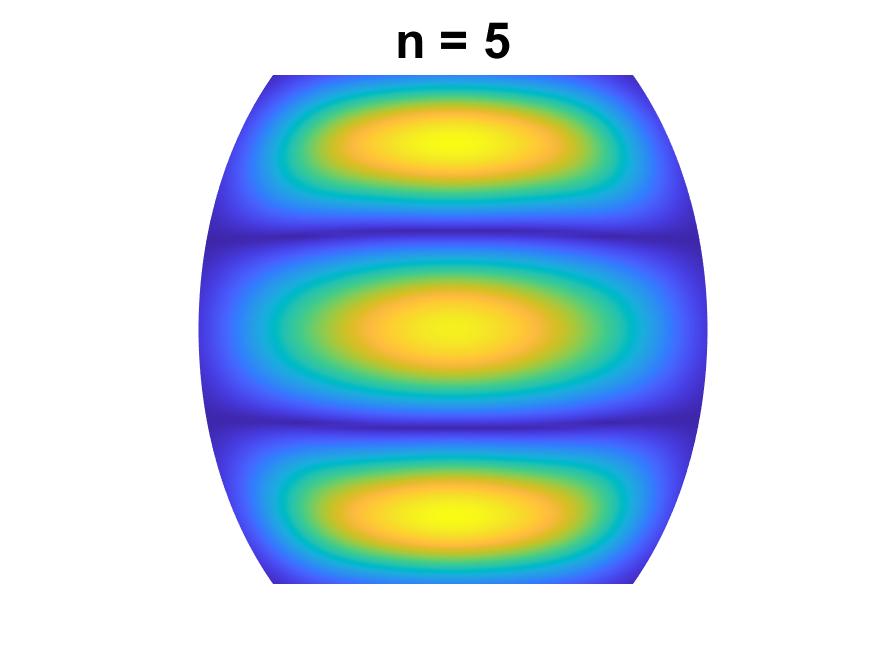}
\includegraphics[ width=.12\linewidth,trim={6cm 2.5cm 5.3cm .5cm},clip]{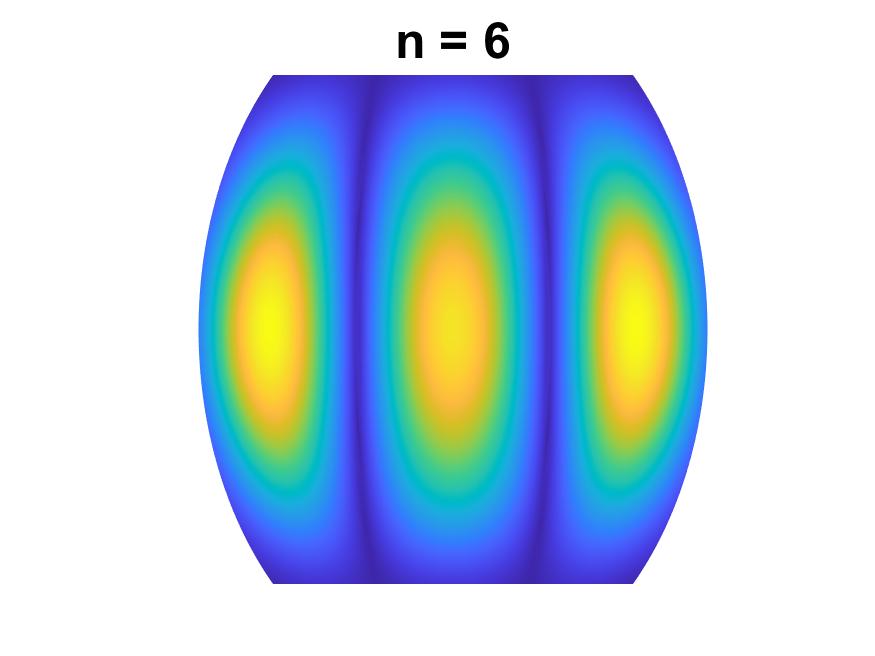} \\
\includegraphics[ width=.12\linewidth,trim={6cm 2.5cm 5.3cm .5cm},clip]{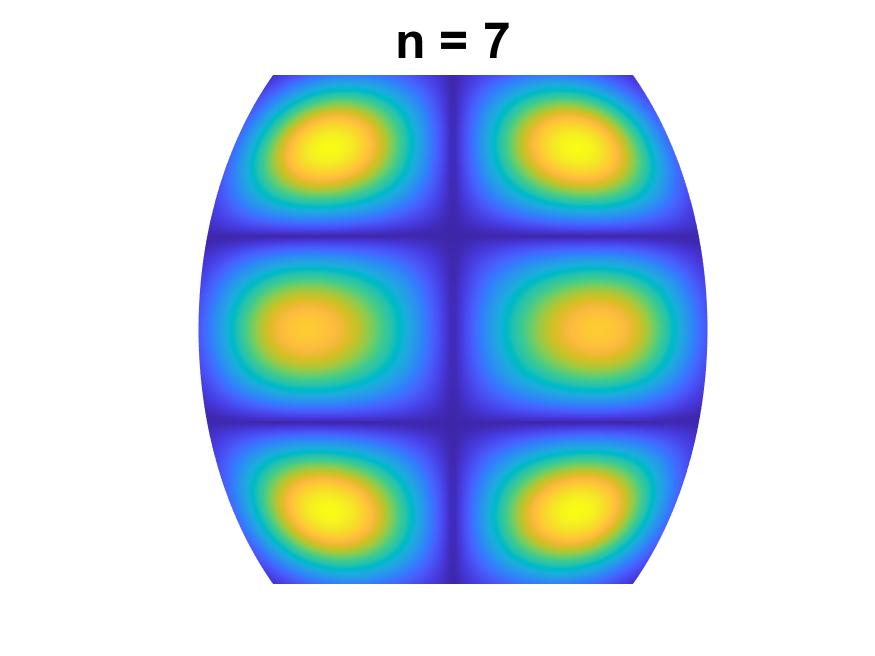}
\includegraphics[ width=.12\linewidth,trim={6cm 2.5cm 5.3cm .5cm},clip]{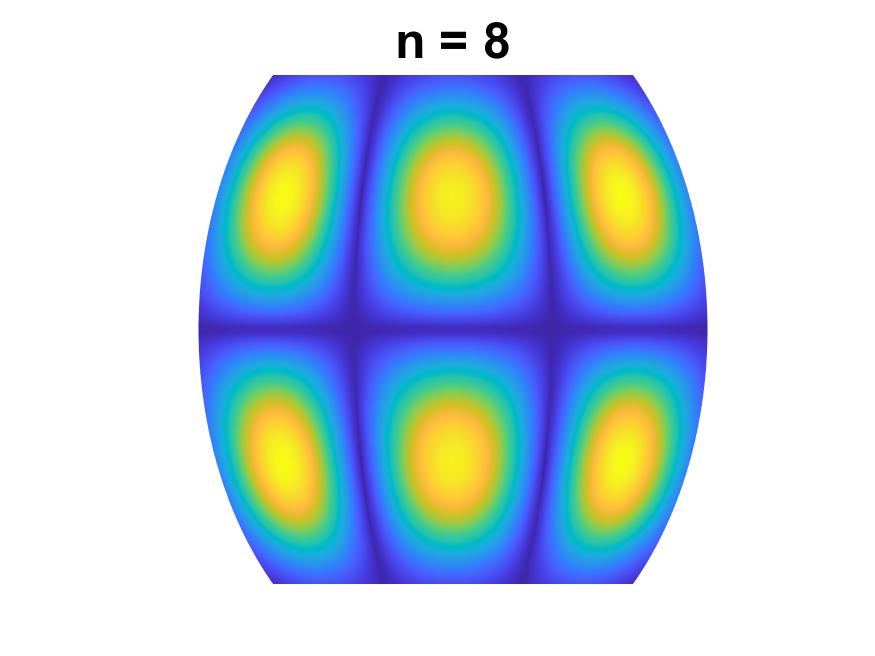}
\includegraphics[ width=.12\linewidth,trim={6cm 2.5cm 5.3cm .5cm},clip]{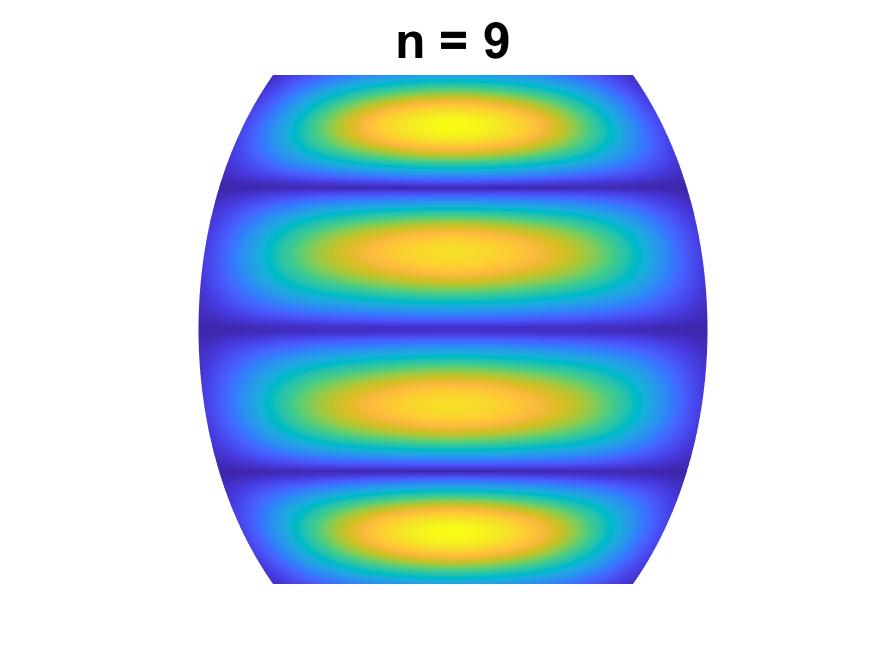}
\includegraphics[ width=.12\linewidth,trim={6cm 2.5cm 5.3cm .5cm},clip]{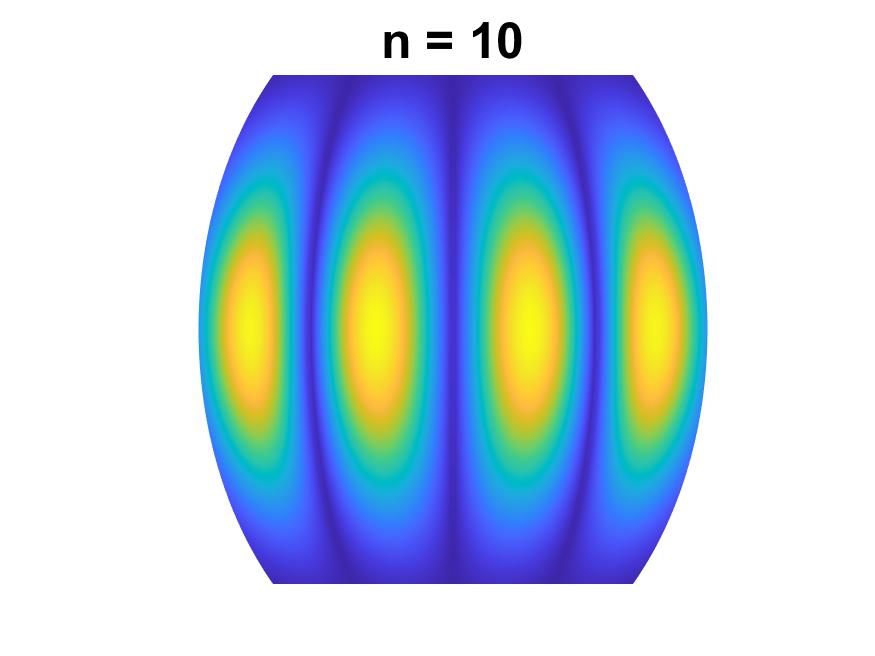}
\includegraphics[ width=.12\linewidth,trim={6cm 2.5cm 5.3cm .5cm},clip]{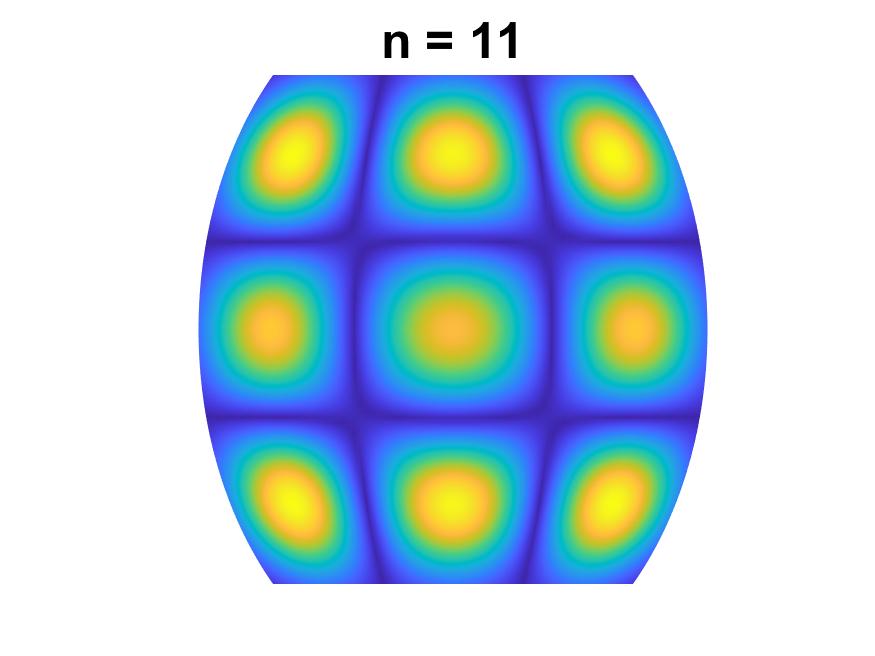}
\includegraphics[ width=.12\linewidth,trim={6cm 2.5cm 5.3cm .5cm},clip]{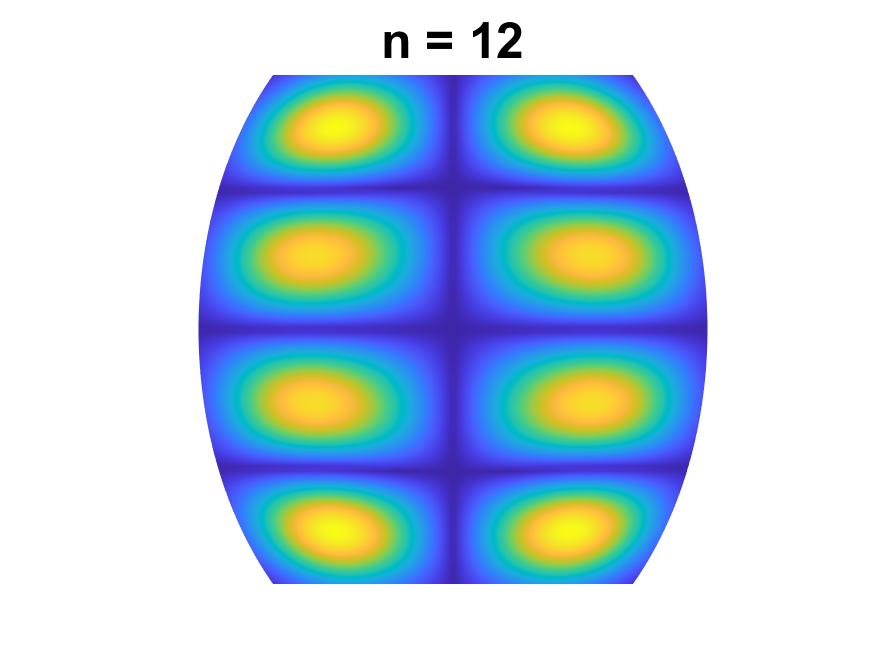} \\
\includegraphics[ width=.12\linewidth,trim={6cm 2.5cm 5.3cm .5cm},clip]{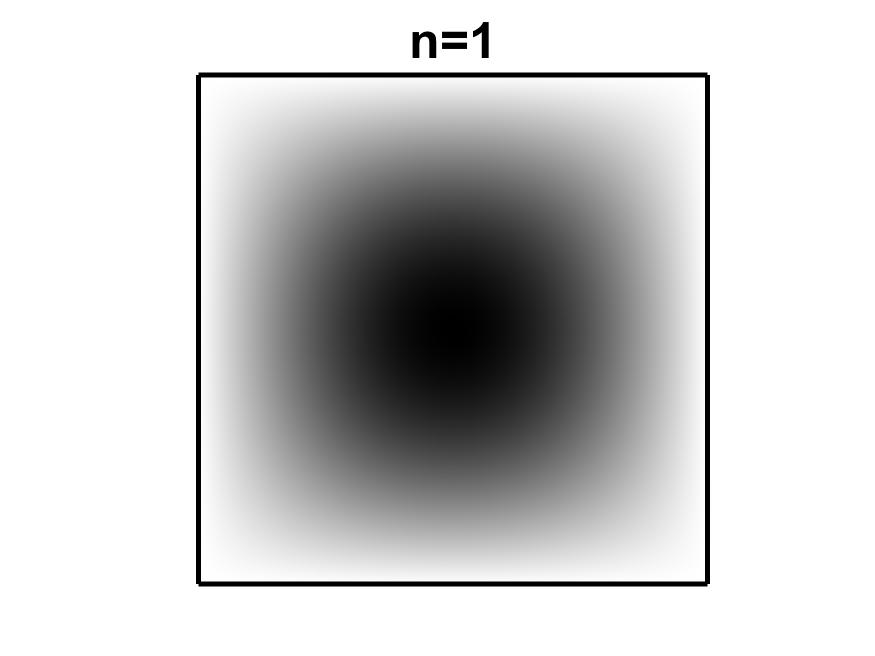}
\includegraphics[ width=.12\linewidth,trim={6cm 2.5cm 5.3cm .5cm},clip]{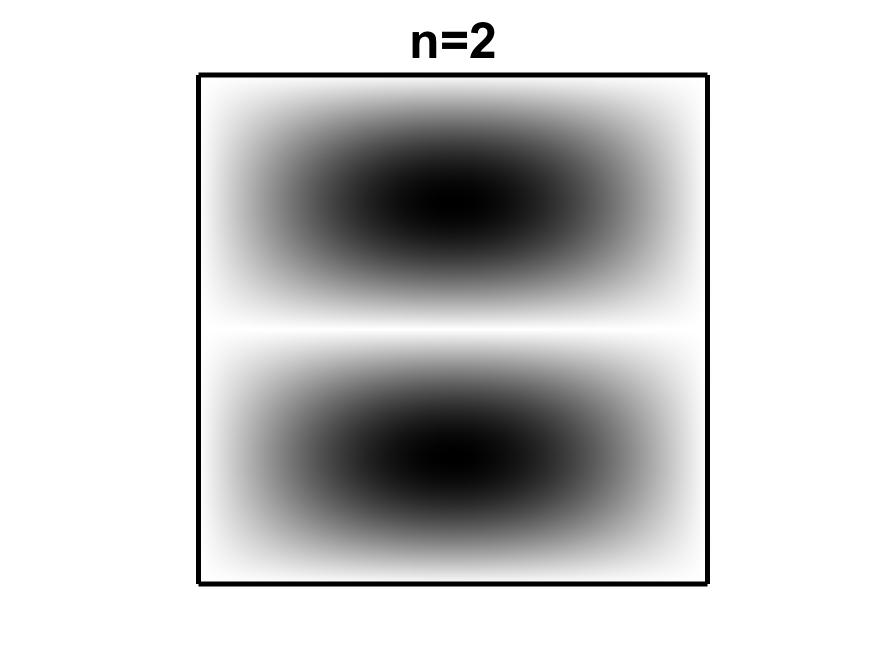}
\includegraphics[ width=.12\linewidth,trim={6cm 2.5cm 5.3cm .5cm},clip]{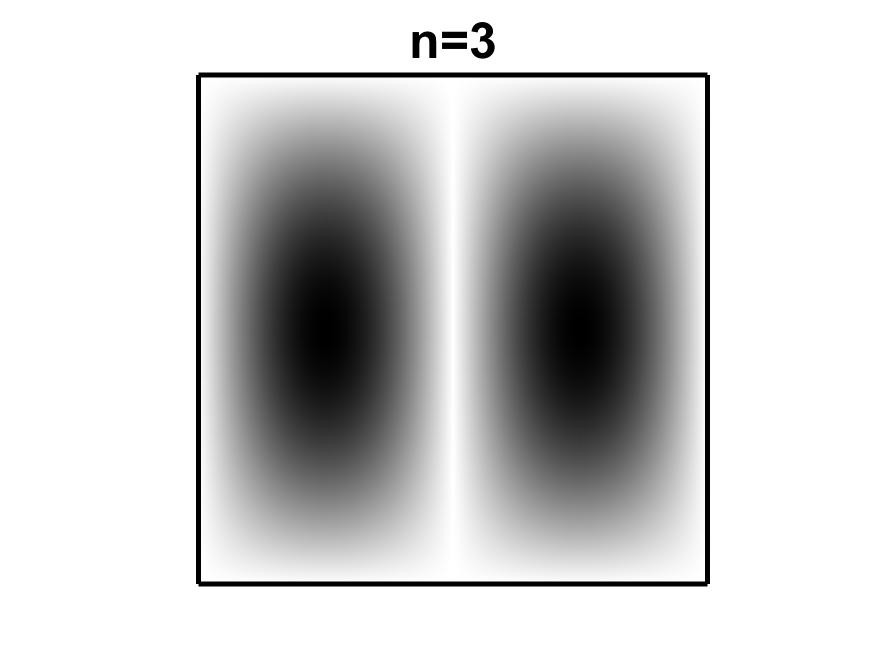}
\includegraphics[ width=.12\linewidth,trim={6cm 2.5cm 5.3cm .5cm},clip]{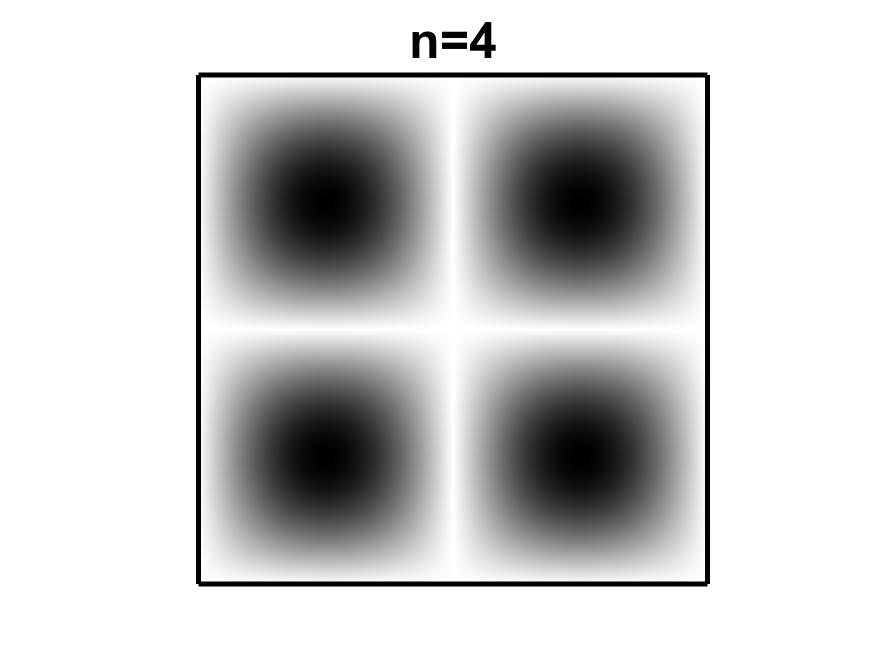}
\includegraphics[ width=.12\linewidth,trim={6cm 2.5cm 5.3cm .5cm},clip]{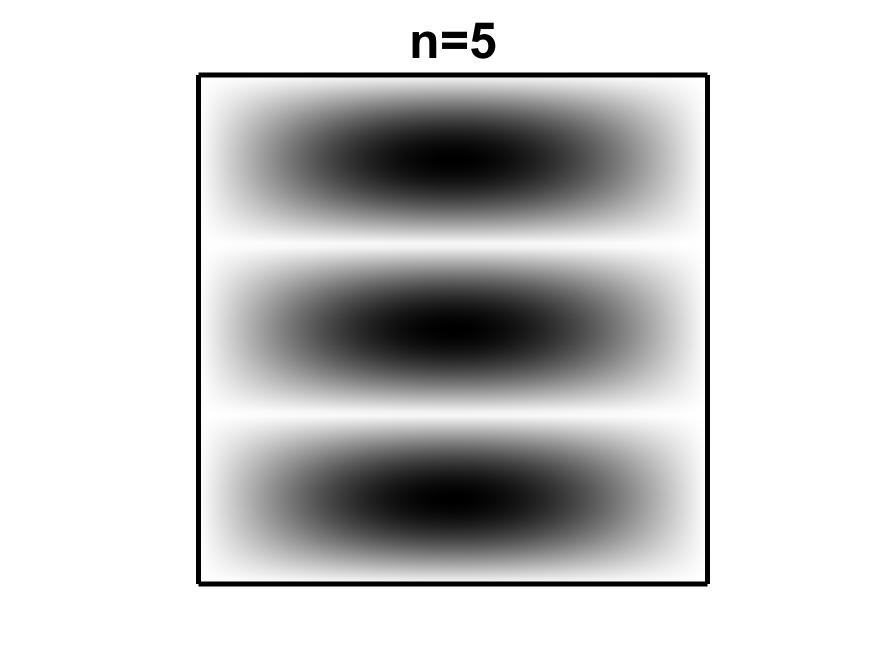}
\includegraphics[ width=.12\linewidth,trim={6cm 2.5cm 5.3cm .5cm},clip]{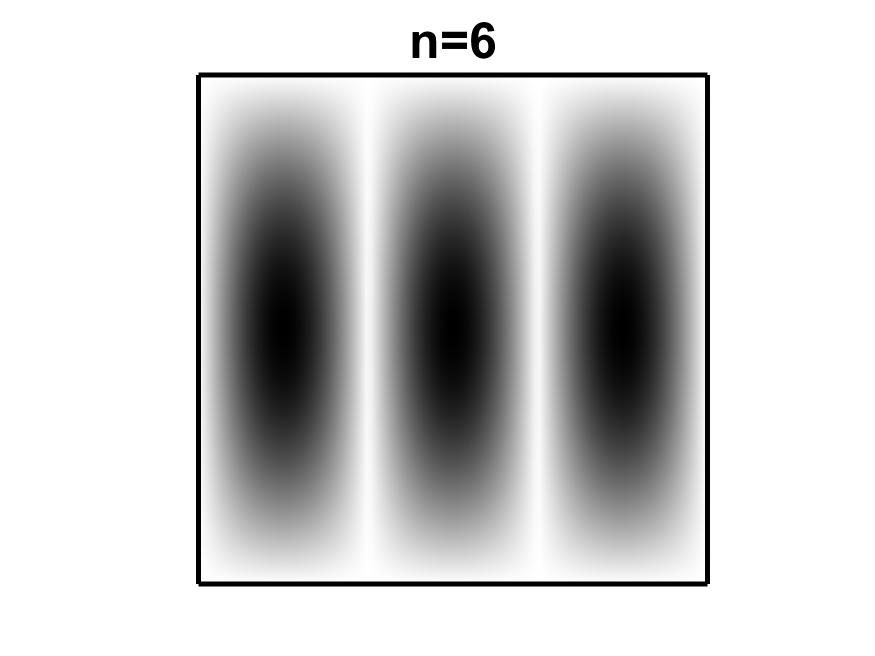} \\
\includegraphics[ width=.12\linewidth,trim={6cm 2.5cm 5.3cm .5cm},clip]{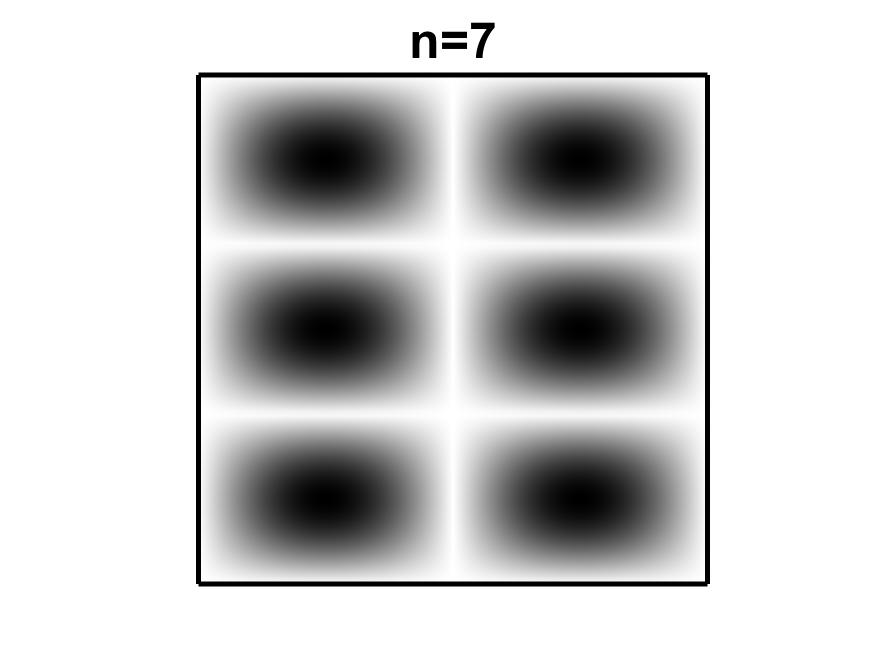}
\includegraphics[ width=.12\linewidth,trim={6cm 2.5cm 5.3cm .5cm},clip]{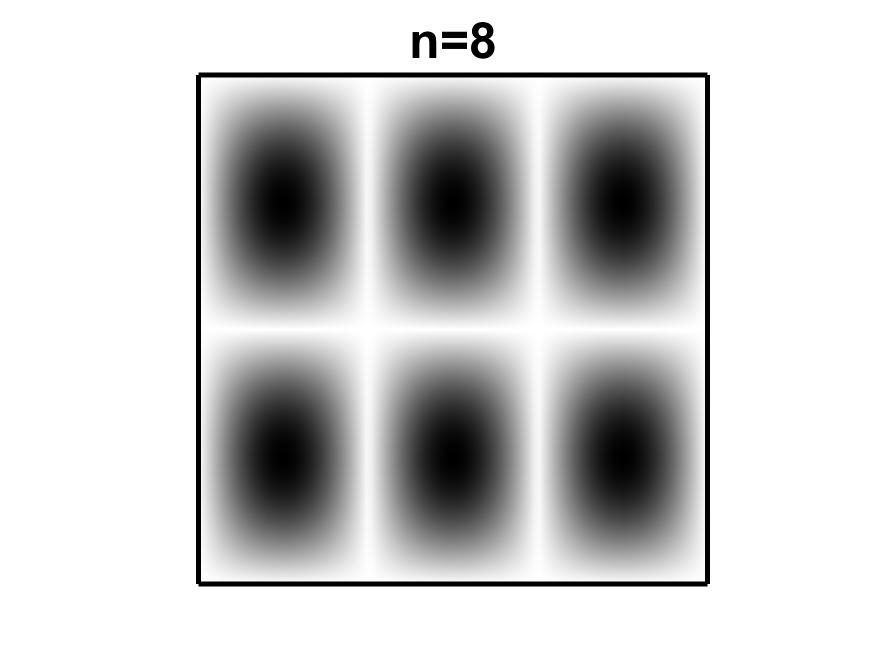}
\includegraphics[ width=.12\linewidth,trim={6cm 2.5cm 5.3cm .5cm},clip]{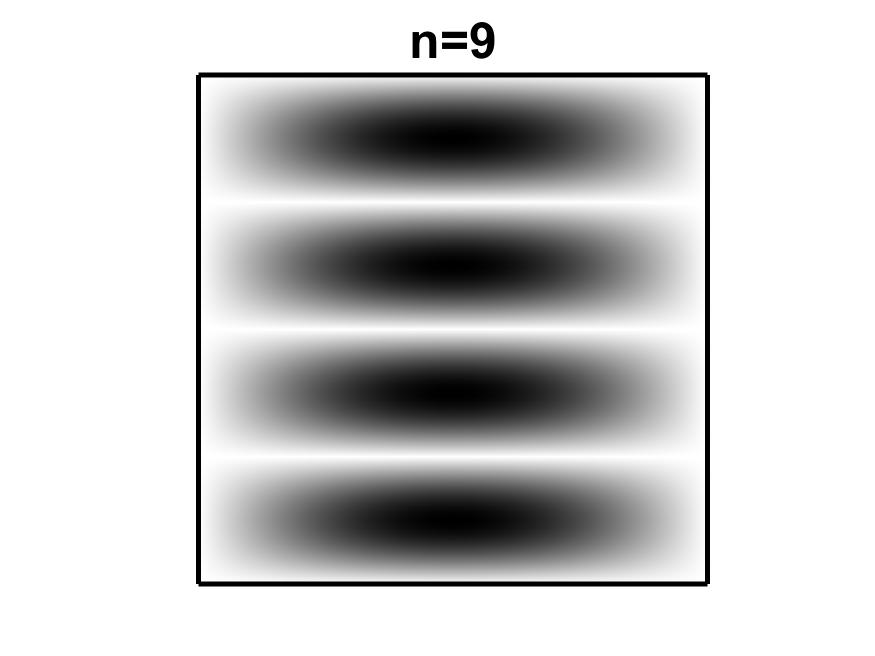}
\includegraphics[ width=.12\linewidth,trim={6cm 2.5cm 5.3cm .5cm},clip]{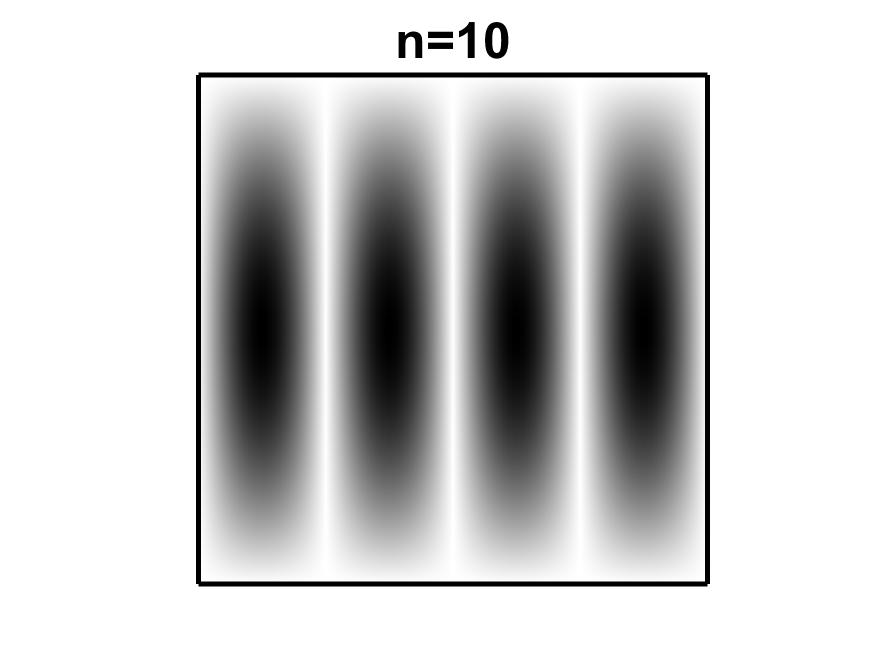}
\includegraphics[ width=.12\linewidth,trim={6cm 2.5cm 5.3cm .5cm},clip]{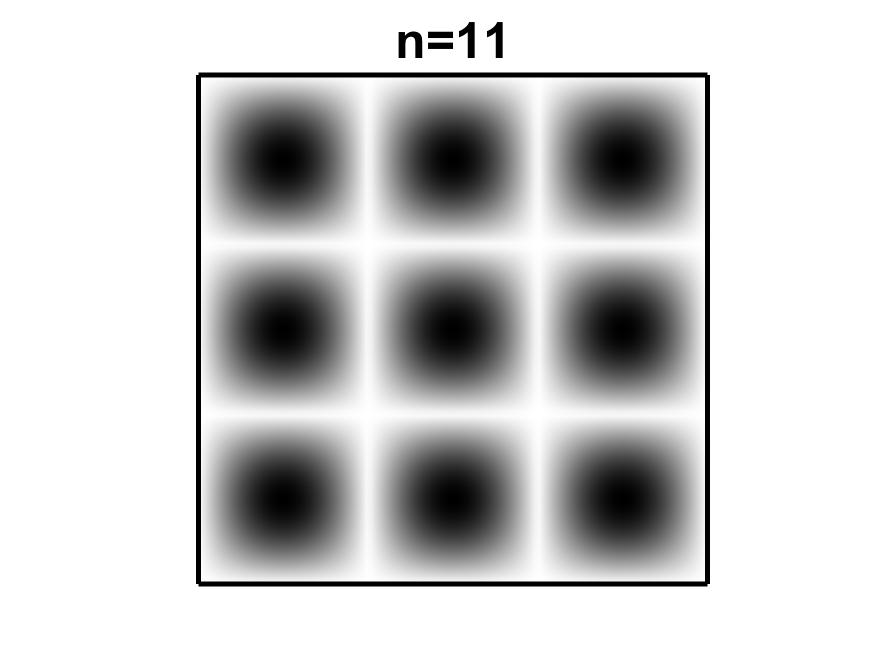}
\includegraphics[ width=.12\linewidth,trim={6cm 2.5cm 5.3cm .5cm},clip]{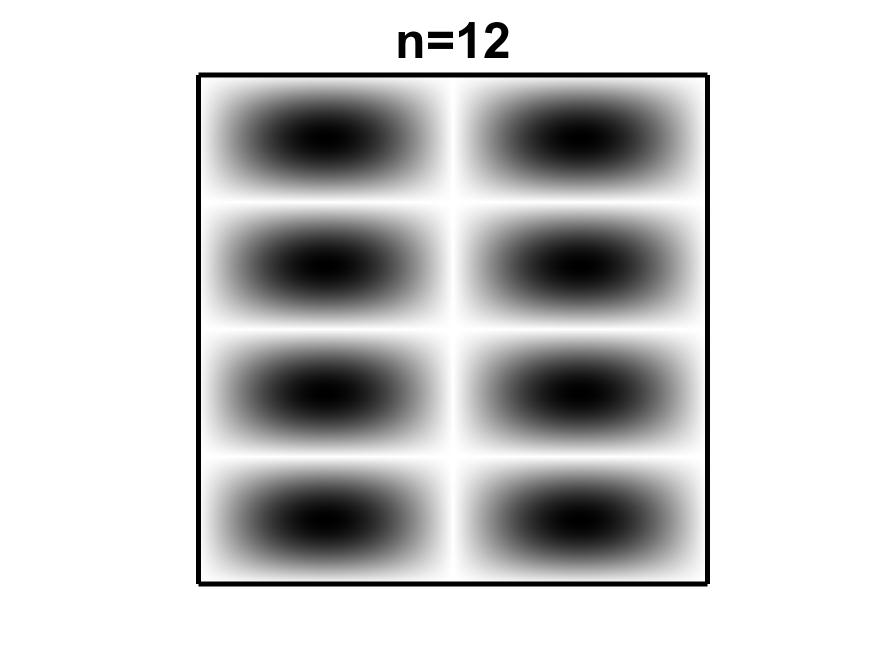}
\caption{The first 12 modes of the hexant or spherical square (above) and those of a planar square (below).}\label{fig:sphsq}
\end{figure}

\subsection{Periodic domains.}\label{sec:per}

The Schr\"odinger equation \eqref{eq:timeind} in periodic domains has important application to solving for Bloch states of electrons in a crystalline solid \cite{bloch}. Here we present an extension of the expansion method to computing eigenvalues and eigenfunctions of the Schrodinger and Helmholtz
equations for periodic domain.
\begin{definition}
A $d$-dimensional lattice $\Gamma_B$ is the set $\{\sum x_i {b}_i : x_i\in\mathbb{Z}\}$ with vectors $\{{b}_i\}\in \mathbb{R}^d$. Consequently, $\Gamma_B=\mathbb{Z}^dB$ for $B \in GL(d,\mathbb{R})$, the group of $d\times d$ invertible real matrices.
\end{definition}
\begin{definition}
The dual of a lattice $\Gamma_B$ is $\Gamma_B^* = \{{x}\in\mathbb{R}^d:\langle{x},{y}\rangle\in\mathbb{Z}, \forall {y}\in\Gamma_B\}$. Consequently, $\Gamma_B^*=\mathbb{Z}^dB^{-T}$ for $B \in GL(d,\mathbb{R})$.
\end{definition}
\begin{definition}
A $d$-dimensional flat torus $T_B$ is defined as the quotient space $T_B = {\mathbb{R}^d}/{\Gamma_B}$ for $B \in GL(d,\mathbb{R})$. 
\end{definition}

\begin{figure}[h!]\centering
\includegraphics[width=.3\linewidth]{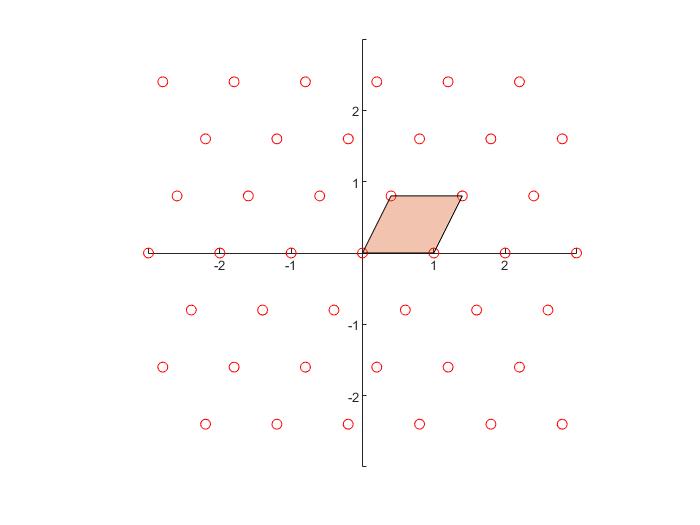}\hfil
\begin{tikzpicture}[baseline= -20pt,
scale=1.50,
line cap=round,
marr/.style={
  decoration={
    markings,
    mark=at position 0.5 with {#1}
    },
  postaction={decorate}
}
] 
\path
  coordinate (n1) at (0,0)
  coordinate (n2) at (.75,1.65)
  coordinate (n3) at (2,0)
  coordinate (n4) at (2.75,1.65);
\foreach \from/\to in {n3/n1,n4/n2}
    \draw[marr=\Singlearrow] (\from) -- (\to);
\foreach \from/\to in {n1/n2,n3/n4}
    \draw[marr=\Doublearrow] (\from) -- (\to);

\end{tikzpicture}
\includegraphics[width=.3\linewidth]{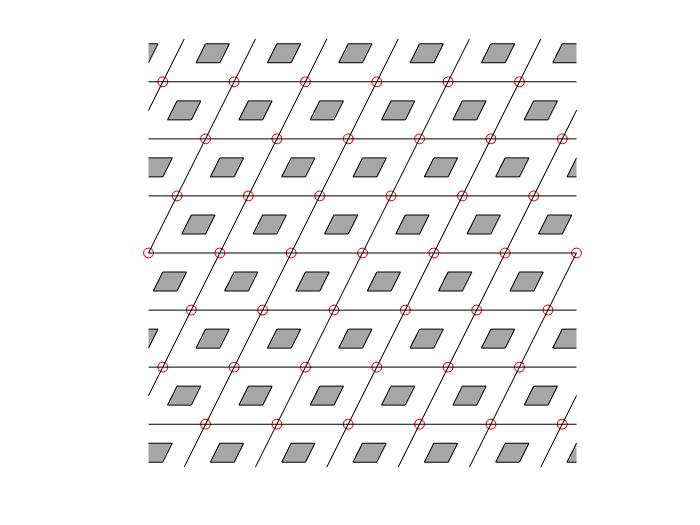}
\caption{Fundamental region of a lattice in $\mathbb{ R} ^2$ (left). A $2$-dimensional flat torus is formed by periodic boundary conditions over opposite edges of the fundamental region (middle). An example of a periodic domain---the dark regions indicate holes with Dirichlet boundary conditions where $V_0 \gg 1$ (right).} \label{fig:lattice1}
\end{figure}
For Euclidean space $\mathbb{R}^d$, eigenfunctions for $-\Delta$ on $\Omega=T_B$ are of the form $\phi_n({x})= \exp(2\pi i \langle {x},{w}\rangle)$ for $ w \in \Gamma^*_B$, and the eigenvalues are $-\Delta\phi_n({x})/\phi_n({x}) = 4\pi^2\|{w}\|^2$. We extend the expansion method to these domains using these eigenpairs and the potential $V( x) = V_0 \chi_{\Omega^c}( x)$. This allows us to compute eigenfunctions of domains with mixed Dirichlet and periodic boundaries. In \autoref{fig:lattice1}, we show an example of a periodic domain with a hole removed in each cell.

Just as in previous sections, the expansion method for a flat torus is given by the discretization of $\hat{H}$ where
\begin{equation}\label{flattexp}
    \begin{array}{ll}
H_{nm} & = \langle \phi_n,\hat{H} \phi_m\rangle\\
&= \lambda_n(T_B)\delta_{nm} +  V_0\mathlarger{\int}_{T_B \setminus \Omega}\phi_n^*\phi_m d x
    \end{array}
\end{equation}

and $\lambda_n(T_B)$ denotes the eigenvalues of the flat torus $T_B$ itself. The eigenvalues and eigenvectors of $H_{nm}$ are approximations of the eigenvalues and eigenfunctions of $\hat{H}$ (in the basis $\{\phi_n\}$). Figure~\ref{fig:per1} displays computed eigenmodes on a periodic domain with holes (domain shown in Figure~\ref{fig:boxtorusspace}). 

\begin{figure}[h!]
    \centering
    \includegraphics[width=.14\linewidth,trim={.5cm .5cm .8cm .8cm}]{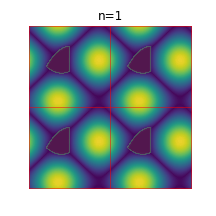}
    \includegraphics[width=.14\linewidth,trim={.5cm .5cm .8cm .8cm}]{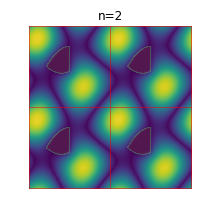}
    \includegraphics[width=.14\linewidth,trim={.5cm .5cm .8cm .8cm}]{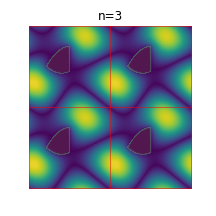}
    \includegraphics[width=.14\linewidth,trim={.5cm .5cm .8cm .8cm}]{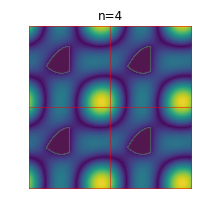}
    \includegraphics[width=.14\linewidth,trim={.5cm .5cm .8cm .8cm}]{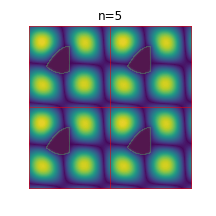}
    \includegraphics[width=.14\linewidth,trim={.5cm .5cm .8cm .8cm}]{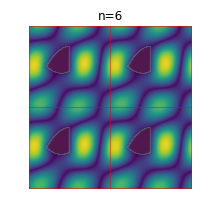}
    
    \includegraphics[width=.14\linewidth,trim={.5cm .5cm .8cm .8cm}]{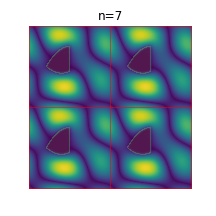}
    \includegraphics[width=.14\linewidth,trim={.5cm .5cm .8cm .8cm}]{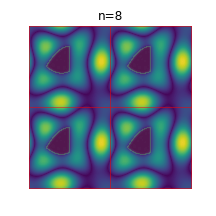}
    \includegraphics[width=.14\linewidth,trim={.5cm .5cm .8cm .8cm}]{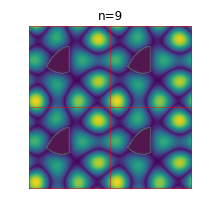}
    \includegraphics[width=.14\linewidth,trim={.5cm .5cm .8cm .8cm}]{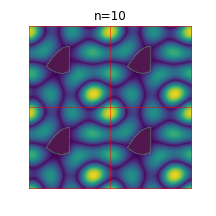}
    \includegraphics[width=.14\linewidth,trim={.5cm .5cm .8cm .8cm}]{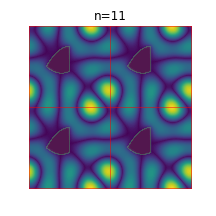}
    \includegraphics[width=.14\linewidth,trim={.5cm .5cm .8cm .8cm}]{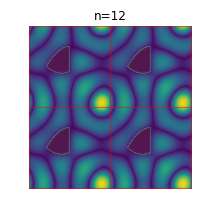}
    
    \caption{The absolute value of the first 12 computed eigenfunctions for a periodic domain (same domain shown in Figure~\ref{fig:boxtorusspace}). We impose periodic conditions on the boundary of the square cells and Dirichlet conditions along the boundary of the omitted shape. }
    \label{fig:per1}
\end{figure}
\subsection{Spectral Clustering and Billiard Trajectories}
Here, we give several examples of implementing the generalized expansion method to heuristically explore quantum signatures of chaos in classical billiards from computed eigenvalue statistics. This is similar to what is done in \cite{expmethod, amorebilliard}, but on manifolds. We use the following conjectures as foundations for the heuristic.
\begin{conj}\label{conj:BT}
(Berry-Tabor) The spectral value spacings of generic integrable systems coincide with those of uncorrelated random numbers from a Poisson process.
\end{conj}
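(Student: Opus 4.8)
This statement is the Berry--Tabor conjecture, which is a celebrated \emph{open} problem; in the paper it is invoked only as a heuristic benchmark for the numerical spectral statistics, so what follows is an attack strategy together with an honest account of why the full statement is out of reach, not a complete argument. The plan is to make ``generic integrable system'' concrete by working with the family actually used in Section~\ref{sec:per}, the flat tori $T_B = \mathbb R^d/\Gamma_B$, for which the excerpt already records that the eigenvalues are exactly $4\pi^2\|w\|^2$ with $w\in\Gamma_B^*$; equivalently they are the values, listed with multiplicity, of the positive-definite quadratic form $Q_B(m,n)=4\pi^2\|(m,n)B^{-T}\|^2$ on $\mathbb Z^d$. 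After unfolding by Weyl's law so that the mean spacing is $1$, Berry--Tabor for this family becomes the assertion that the local statistics of the multiset $\{Q_B(k):k\in\mathbb Z^d\}$ are those of a Poisson point process for Lebesgue-almost every $B$ (this a.e.\ formulation is the precise meaning I would attach to ``generic''). First I would reduce to the $k$-level correlation functions $R_k$: by a standard moment and inclusion--exclusion argument, convergence of every $R_k$ to its Poisson value (Lebesgue measure on $\mathbb R^{k-1}$) implies convergence of the gap distribution and of all other local statistics, so it suffices to prove $R_k\to 1$ for each fixed $k$.

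The second step is the pair correlation $R_2$, which is the one case where the program can actually be carried out. Counting pairs $(w,w')$ in the dual lattice with $\big|\,\|w\|^2-\|w'\|^2\,\big|$ in a window of width $O(1/N)$ among the $\sim N$ smallest eigenvalues becomes, after rescaling, a count of lattice points of $g_t\Gamma_B^*$ in a fixed (non-compact) region, where $g_t$ is the diagonal geodesic flow on the space $X_d=SL(d,\mathbb R)/SL(d,\mathbb Z)$ of unimodular lattices. One then wants equidistribution of $g_t\Gamma_B^*$ in $X_d$ for a.e.\ $B$; this follows from ergodicity of the flow together with a Borel--Cantelli and effective-equidistribution argument to accommodate the shrinking targets and the escape of mass caused by lattices with short vectors, in the spirit of Sarnak's work on $\alpha m^2+n^2$ and of Eskin--Margulis--Mozes. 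That yields $R_2(B)\to 1$ for a.e.\ $B$, which is the realistic deliverable of this approach.

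The step I expect to be the main obstacle is passing from $R_2$ to $R_k$ with $k\ge 3$. The $k$-point count becomes an integral of a test function — again not compactly supported — against the push-forward of Haar measure on a product of copies of $X_d$, and controlling the contribution of configurations of lattices with short vectors requires genuinely new diophantine input beyond the second-moment Siegel-transform bounds that suffice for $k=2$; already for $d=2$ the three-level correlation, let alone the full gap distribution, is open for this reason. A secondary difficulty is that Weyl's law pins down only the average density, so the unfolding introduces fluctuation terms that must be shown not to reach the local scale; this is harmless for $R_2$ but compounds with the escape-of-mass problem at higher $k$. In short, this line of argument can plausibly establish Poissonian pair correlation for almost every flat torus, with the remainder of Berry--Tabor left — as it currently stands in the literature — as the genuinely hard open part, which is why the paper uses it only heuristically.
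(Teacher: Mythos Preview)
Your assessment is correct: the paper does not prove this statement at all---it is labeled a \emph{Conjecture} and invoked purely as a heuristic to interpret the computed eigenvalue spacing histograms, with no argument offered. Your proposal rightly identifies it as an open problem and goes well beyond the paper by sketching the Eskin--Margulis--Mozes/Sarnak approach to pair correlation for generic flat tori and by isolating the higher-correlation obstacle; none of that appears in the paper, which simply cites the conjecture and compares numerical spacing distributions against $P_0(s)=e^{-s}$.
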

\begin{conj}\label{conj:BGS}
(Bohigas-Gianonni-Schmit) The spectral value spacings of generic classically chaotic systems coincide with those of random matrices from the Gaussian Ensembles.
\end{conj}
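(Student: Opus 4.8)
This is the Bohigas--Giannoni--Schmit conjecture, which is not known in full rigor; the proposal below is the semiclassical program that constitutes the current state of the art toward it, together with the point at which a rigorous argument breaks down. The plan is to pass from the quantum spectrum to the classical periodic orbits via the Gutzwiller trace formula, which writes the density of states as a smooth Weyl term plus an oscillatory sum $\sum_{\gamma} A_{\gamma}\, e^{i S_{\gamma}/\hbar}$ over periodic orbits $\gamma$, with classical actions $S_{\gamma}$, stability amplitudes $A_{\gamma}$, and Maslov indices. One then forms the spectral form factor $K(\tau)$, the Fourier transform of the two-point correlation of the unfolded spectrum, which becomes a double sum over pairs $(\gamma,\gamma')$ weighted by $e^{i(S_{\gamma}-S_{\gamma'})/\hbar}$; the target is to show that, after averaging over an energy window, $K(\tau)$ coincides with the random-matrix form factor of the appropriate symmetry class --- $K_{\mathrm{GUE}}(\tau)=\tau$ for systems without time-reversal symmetry, $K_{\mathrm{GOE}}(\tau)=2\tau-\tau\ln(1+2\tau)$ with it --- from which the Wigner-surmise spacing law follows by standard Fourier/Tauberian arguments.

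The key steps, in order, are: (i) the diagonal approximation of Berry, retaining only pairs with $\gamma'=\gamma$ (and, under time reversal, $\gamma'$ the time reverse of $\gamma$); using the Hannay--Ozorio de Almeida sum rule for the proliferation of long orbits of an ergodic mixing flow, this produces the leading term $K(\tau)\sim g\tau$ with $g=1$ or $g=2$. (ii) The first off-diagonal correction from Sieber--Richter partner orbits, which differ only inside a small self-encounter where two long stretches are reconnected; a phase-space bookkeeping of encounter durations against the Lyapunov instability yields the $-2\tau^{2}$ term of the GOE series. (iii) The full combinatorial resummation of Müller, Heusler, Braun, Haake and Altland over all orbit families with arbitrarily many encounters of arbitrary multiplicity, reproducing $K_{\mathrm{RMT}}(\tau)$ order by order in $\tau$ for $\tau<1$. (iv) For the non-perturbative regime $\tau\ge 1$, the matching of this orbit expansion with a field-theoretic (nonlinear $\sigma$-model) generating function whose saddle structure produces the plateau $K(\tau)\equiv 1$.

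The hard part --- and the reason the statement is a conjecture rather than a theorem --- will be making any of (i)--(iv) rigorous for a general chaotic flow. The Hannay--Ozorio de Almeida sum rule and the encounter calculus are asymptotic heuristics that require uncontrolled interchanges of the $\hbar\to 0$ limit, the energy average, and a sum over exponentially many orbits with exponentially small amplitudes whose cumulative error has never been estimated; no convergence bound for the periodic-orbit double sum is known, and the $\sigma$-model identification has itself only been verified perturbatively. The hypotheses are also not sharp: ``generic'' must be read so as to exclude arithmetic chaos, e.g.\ the modular surface, where degeneracies in the length spectrum give Poisson-like statistics despite strong mixing, so a genuine proof would first have to pin down the precise genericity condition on the classical dynamics. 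Since in the present paper the conjecture is invoked only heuristically, to interpret the computed eigenvalue-spacing histograms for billiards on manifolds, we do not attempt to settle it here.
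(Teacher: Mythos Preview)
Your assessment is correct: the paper does not prove this statement at all --- it is labeled a \emph{Conjecture} and is invoked purely as a heuristic to interpret the computed eigenvalue-spacing histograms against the GOE surmise $P_{\mathrm{GOE}}(s)=\tfrac{1}{2}\pi s\, e^{-\pi s^{2}/4}$. There is therefore no proof in the paper to compare your proposal against.

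Your write-up goes well beyond what the paper does by sketching the Gutzwiller trace formula, the Berry diagonal approximation with the Hannay--Ozorio de Almeida sum rule, the Sieber--Richter encounter correction, the M\"uller--Heusler--Braun--Haake--Altland resummation, and the $\sigma$-model matching for $\tau\ge 1$. That is the accepted semiclassical program toward BGS, and you correctly identify both the places where rigor fails (uncontrolled interchange of $\hbar\to 0$ with the exponentially proliferating orbit sum, lack of error bounds on the encounter calculus) and the need to exclude arithmetic counterexamples from any precise statement of ``generic''. None of this appears in the paper, which simply cites the conjecture; your final sentence already matches the paper's stance exactly.
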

\begin{figure}[h!]\centering
\includegraphics[width=.3\textwidth]{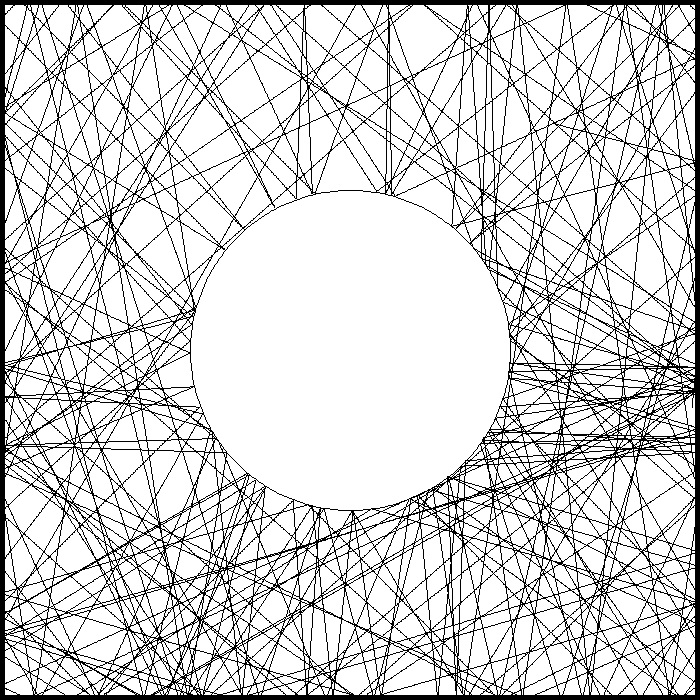} \includegraphics[width=.3\textwidth]{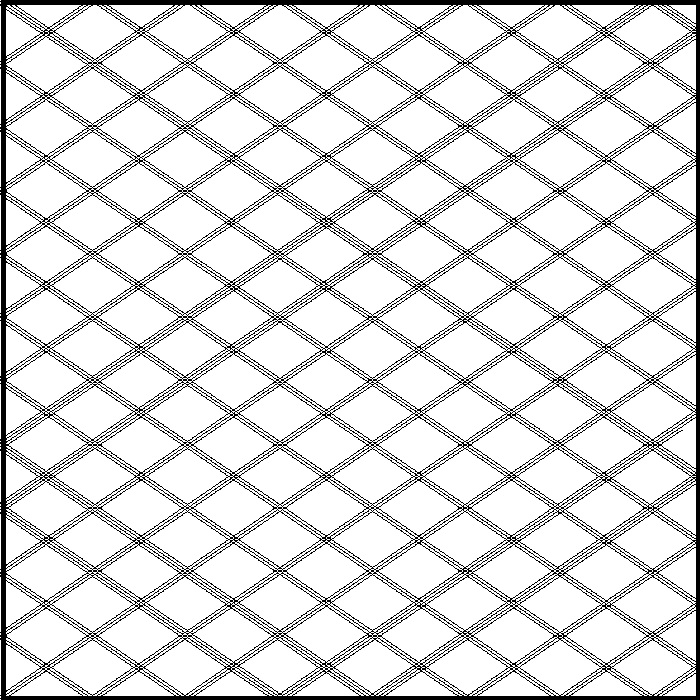}
  \caption{These billiards give simple examples of chaotic trajectories (left) and regular trajectories (right). The chaotic billiard here is the well-known Sinai Billiard.}
  \label{fig:billiards}
\end{figure}

It is understood that \textit{generic systems} have zero probability to have symmetries, and Hamiltonians with symmetries have degenerate states that are not relevant to the discussion of billiard dynamics \cite{quantbill}. Hence, in this discussion and in common practice, we desymmetrize the domains as much as possible before computing eigenvalues in order to ignore the symmetric modes, and we are left with billiards which we assume are sufficiently \textit{generic}. Further insight into these conjectures and their relation to random matrix theory can be found in \cite{rmt1,rmt2}.

Now, by considering the normalized distribution of the computed first $n$ spacings between consecutive Laplace--Beltrami eigenvalues of some desymmetrized region $\Omega$ with Dirichlet boundary conditions, we apply the Conjectures~\ref{conj:BT} and \ref{conj:BGS} as a heuristic to verify the trajectory type (see \autoref{fig:billiards}) of the following regions by comparing these distributions to the Poisson distribution $P_0(s) =e^{-s}$ and GOE distribution $P_{\operatorname{GOE}}(s)= \frac 1 2 \pi s e^{-\pi s^2/ 4}$.
% we observe the following conditions:
% \begin{itemize}
% \item[(1)] If $\Omega$ is a generic integrable system, its distribution $P_{\Omega,n}(s)$ approaches a Poisson distribution $P_0(s) = \exp{-s}$.

% \item[(2)] If the distribution $P_{\Omega,n}(s)$ approaches a Gaussian orthogonal ensemble (GOE) distribution $P_\mathrm{GOE}(s) = \frac 1 2 \pi s \exp(-\pi s^2/ 4)$.
% \end{itemize}
% These distributions are displayed in \autoref{fig:distr}. 
% \begin{figure}[h!]\centering
%  \includegraphics[width=.6\textwidth]{}
%   \caption{The plots for $P_0(s)$ and $P_\mathrm{GOE}(s)$ are given.}
%   \label{fig:distr}
% \end{figure}
\subsubsection*{\textbf{Planar billiards.}}
As the Sinai Billiard is a well-known classically chaotic billiard, we have used this domain as an example to perform the generalized expansion method and compare the resulting eigenvalue spacing distribution to the expected GOE distribution. As the equilateral triangle is a known integrable system, we expect its eigenvalue spacing distribution to coincide with a Poisson distribution. Using the method, we indeed arrive at these results and show them in \autoref{fig:sinai2}. We perform, when possible, a terminating sequence of desymmetrizations on the domains, as shown in \ref{fig:sinaibill}.

\begin{figure}[h!]\centering
\includegraphics[width=.2\textwidth]{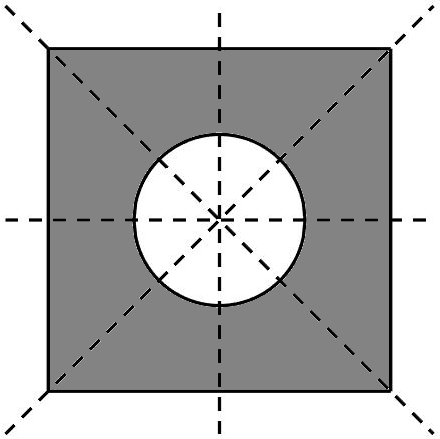}
\includegraphics[width=.2\textwidth]{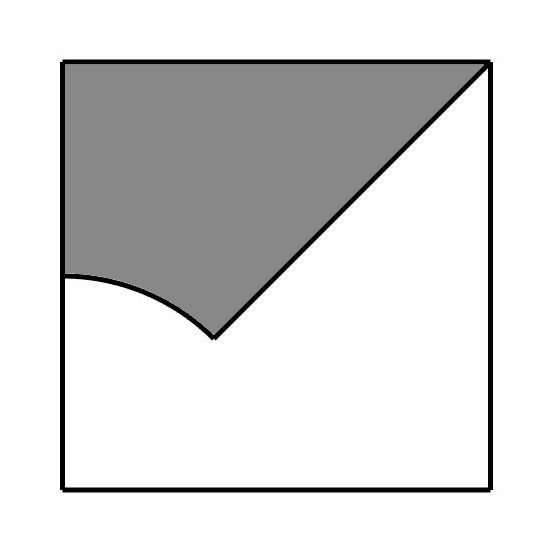}
\includegraphics[width=.2\textwidth]{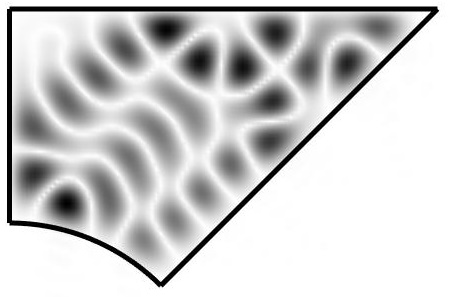}
  \caption{Symmetries of the Sinai billiard (left), desymmetrized domain embedded in a square (middle), and nodal lines of 40th eigenstate (right).}
  \label{fig:sinaibill}
\end{figure}

\begin{figure}[h!]\centering
\includegraphics[width=.4\textwidth]{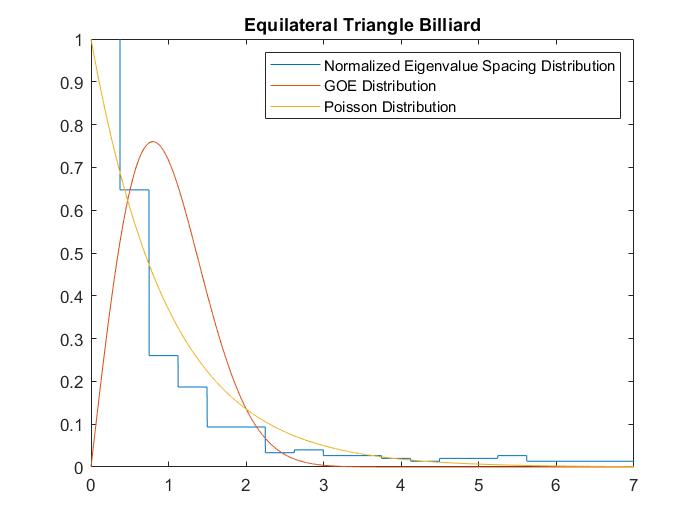}
\includegraphics[width=.4\textwidth]{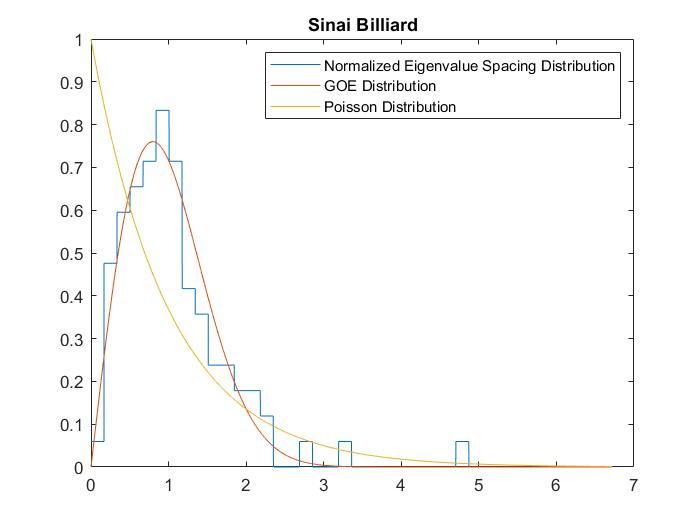}

  \caption{The eigenvalue spacings of an equilateral triangle (left) compared to those of the Sinai Billiard (right).}
  \label{fig:sinai2}
\end{figure}
\subsubsection*{\textbf{Spherical billiards.}}
Here, we consider spherical domains: octant (spherical equilateral triangle) and octant with a hole removed as shown in \autoref{fig:sphhist}. The former does not have a terminating series of desymmetrizations (so we leave it as is). However, we indeed can perform a desymmetrization on the latter, as shown. The eigenvalue distributions are compared in \autoref{fig:sphhist}. These eigenvalue statistics suggest the domains are regular and chaotic, respectively.

\begin{figure}[h!]\centering
\includegraphics[width=.24\textwidth]{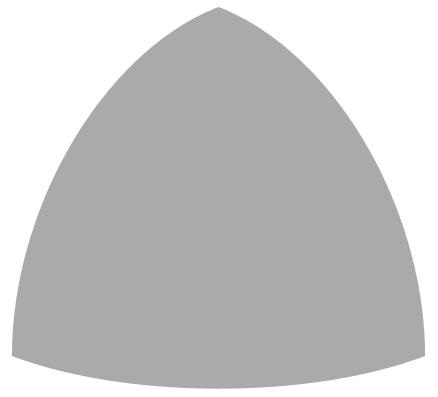}
\includegraphics[width=.24\textwidth]{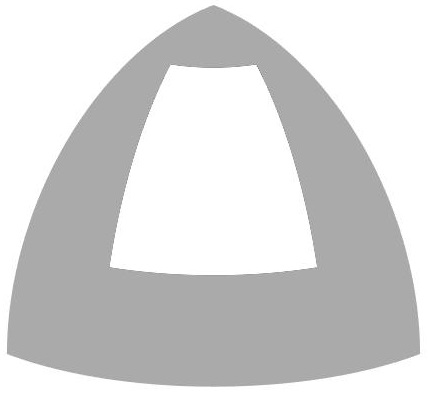}
\includegraphics[width=.24\textwidth, trim ={0cm 0cm -5cm 0cm}]{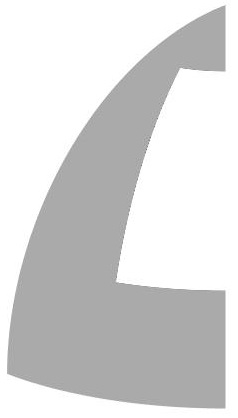} \\

\includegraphics[width=.4\textwidth]{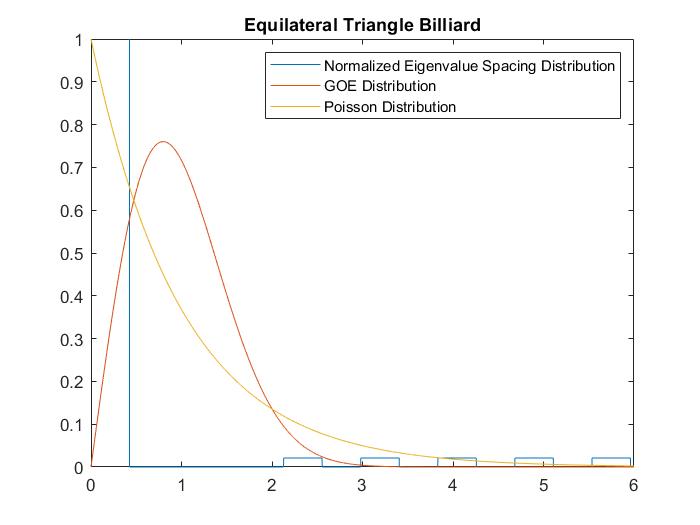}
\includegraphics[width=.4\textwidth]{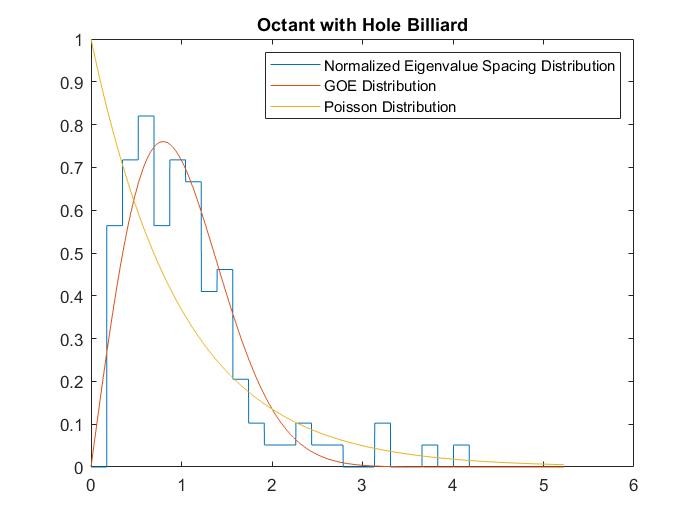}

  \caption{ Spherical octant (top left), octant with hole (top middle), and desymmetrized octant with hole (top right). Their respective eigenvalue spacings (bottom).}
  \label{fig:sphhist}
\end{figure}
\subsubsection*{\textbf{Periodic billiards.}}

In \autoref{fig:boxtorusspace} we illustrate and compute eigenvalue spacings for two periodic domains after necessary desymmetrizing. They both have eigenvalues with low clustering and appear to take on a Poisson distribution, indicating chaotic trajectories.

\begin{figure}[h!]\centering
\includegraphics[width=.3\linewidth, trim ={5.35cm 5.35cm 0cm 0cm},clip]{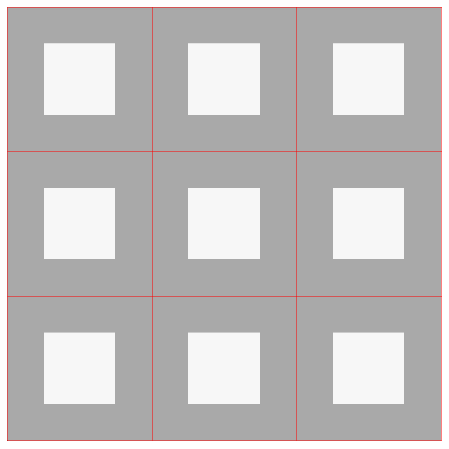} \hspace{1.5cm}
\includegraphics[width=.3\linewidth, trim ={5.35cm 5.35cm 0cm 0cm},clip]{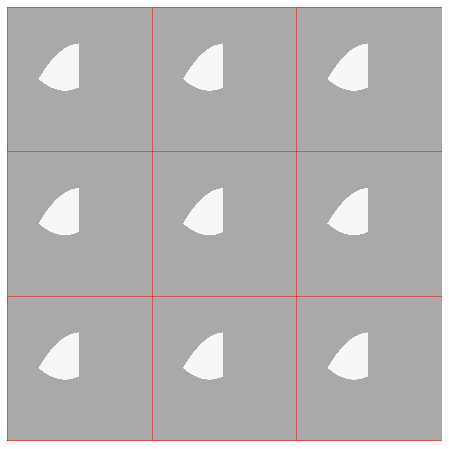} \\
\includegraphics[width=.4\linewidth]{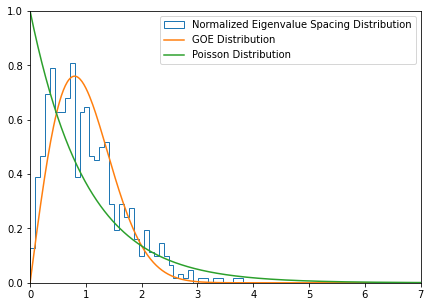}
\includegraphics[width=.4\linewidth]{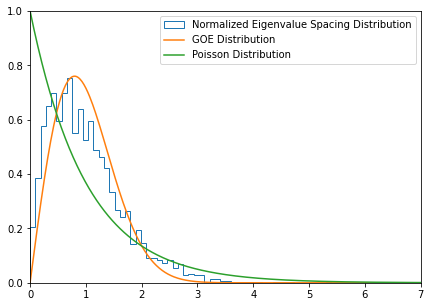}
\caption{Four cells of two periodic domains (top) and their respective histograms of the normalized Eigenvalue Spacing Distributions for the regions
obtained by desymmetrizing (bottom). The grey regions indicate $\Omega$, and we impose Dirichlet boundary conditions where they meet the white regions. The first twelve eigenfunctions for the asymmetric  domain are shown in Figure~\ref{fig:per1}. }\label{fig:boxtorusspace}
\end{figure}

% \begin{figure}[h!]\centering

% \caption{Eigenfunction and domain (left) distribution of spacings of consecutive eigenvalues for this domain (right).}\label{fig:boxtorusspace}
% \end{figure}

\section{Conclusion}\label{sec:conclusion}
The Laplace--Beltrami operator is crucial to describing many physical phenomena on manifolds, and calculation of its eigenvalues is important to many applications involving non-Euclidean media. The generalized expansion method described in this paper provides a straightforward approach to discretize the Laplace--Beltrami operator to approximate its eigenmodes and eigenvalues. We provided proofs for its spectral convergence (Theorems~\ref{thm:V} and \ref{thm:M}) along with various analytic and numerical examples, including its application to studying billiard problems on surfaces. Notable applications for this method exist in nonlinear systems such as the study of Kerr media and Bose-Einstein condensates where one may approximate solutions by iterating on the ground state solution of the Schr\"odinger equation \cite{bronski2001bose,bao2004ground,bao2004computing}. Additionally, many applications exist in condensed matter physics. For example, as demonstrated in Section~\ref{sec:per}, this method can be used to solve for Bloch states of periodic domains defined on a lattice. Other applications include theories of 2D materials \cite{avouris20172d,paul2017computational}, superconductors \cite{berman2006superconducting}, and types of soft matter such as membranes \cite{meyer2007structure}.

%\input{Chapters/chap5}

	%\rhead{BIBLIOGRAPHY}
%	\chapter*{REFERENCES}

\subsection*{Acknowledgement}
The authors acknowledge the support from Department of Mathematics at University of Utah where this project was initialized. Elena Cherkaev  acknowledges support from the U.S. National Science Foundation through grants DMS-1715680 and DMS-2111117. Dong Wang acknowledges the support from National Natural Science Foundation of China (NSFC) grant 12101524 and the University Development Fund from The Chinese University of Hong Kong, Shenzhen (UDF01001803). 

\printbibliography

%	\let\cleardoublepage\clearpage
%	\rhead{\leftmark}
%	\let\cleardoublepage\clearpage
%	\include{StudyPlan/plan}
\end{document}